\numberwithin{equation}{section}
\numberwithin{figure}{section}
\theoremstyle{plain}
\newtheorem{thm}{\protect\theoremname}[section]
  \theoremstyle{remark}
  \newtheorem{rem}[thm]{\protect\remarkname}
  \theoremstyle{plain}
  \newtheorem{conjecture}[thm]{\protect\conjecturename}
  \theoremstyle{plain}
  \newtheorem{prop}[thm]{\protect\propositionname}
  \theoremstyle{definition}
  \newtheorem{defn}[thm]{\protect\definitionname}
  \theoremstyle{plain}
  \newtheorem{lem}[thm]{\protect\lemmaname}
  \theoremstyle{definition}
  \newtheorem{example}[thm]{\protect\examplename}
  \theoremstyle{plain}
  \newtheorem{cor}[thm]{\protect\corollaryname}
\subjclass[2010]{14E18, 14E16, 11S15}
  \providecommand{\conjecturename}{Conjecture}
  \providecommand{\corollaryname}{Corollary}
  \providecommand{\definitionname}{Definition}
  \providecommand{\examplename}{Example}
  \providecommand{\lemmaname}{Lemma}
  \providecommand{\propositionname}{Proposition}
  \providecommand{\remarkname}{Remark}
\providecommand{\theoremname}{Theorem}
\begin{document}

\title{Wilder McKay correspondences}

\author{Takehiko Yasuda}
\begin{abstract}
A conjectural generalization of the McKay correspondence in terms
of stringy invariants to arbitrary characteristic, including the wild
case, was recently formulated by the author in the case where the
given finite group linearly acts on an affine space. In cases of very
special groups and representations, the conjecture has been verified
and related stringy invariants have been explicitly computed. In this
paper, we try to generalize the conjecture and computations to more
complicated situations such as non-linear actions on possibly singular
spaces and non-permutation representations of non-abelian groups.
\end{abstract}

\address{Department of Mathematics, Graduate School of Science, Osaka University,
Toyonaka, Osaka 560-0043, Japan, tel:+81-6-6850-5326, fax:+81-6-6850-5327}

\email{takehikoyasuda@math.sci.osaka-u.ac.jp}

\thanks{This work was partially supported by Grants-in-Aid for Scientific
Research (22740020).}

\maketitle
\global\long\def\AA{\mathbb{A}}
\global\long\def\PP{\mathbb{P}}
\global\long\def\NN{\mathbb{N}}
\global\long\def\GG{\mathbb{G}}
\global\long\def\ZZ{\mathbb{Z}}
\global\long\def\QQ{\mathbb{Q}}
\global\long\def\CC{\mathbb{C}}
\global\long\def\FF{\mathbb{F}}
\global\long\def\LL{\mathbb{L}}
\global\long\def\RR{\mathbb{R}}
\global\long\def\MM{\mathbb{M}}
\global\long\def\SS{\mathbb{S}}

\global\long\def\bx{\mathbf{x}}
\global\long\def\bf{\mathbf{f}}
\global\long\def\ba{\mathbf{a}}
\global\long\def\bs{\mathbf{s}}
\global\long\def\bt{\mathbf{t}}
\global\long\def\bw{\mathbf{w}}
\global\long\def\bb{\mathbf{b}}
\global\long\def\bv{\mathbf{v}}
\global\long\def\bp{\mathbf{p}}
\global\long\def\bm{\mathbf{m}}
\global\long\def\bj{\mathbf{j}}
\global\long\def\bM{\mathbf{M}}

\global\long\def\cN{\mathcal{N}}
\global\long\def\cW{\mathcal{W}}
\global\long\def\cY{\mathcal{Y}}
\global\long\def\cM{\mathcal{M}}
\global\long\def\cF{\mathcal{F}}
\global\long\def\cX{\mathcal{X}}
\global\long\def\cE{\mathcal{E}}
\global\long\def\cJ{\mathcal{J}}
\global\long\def\cO{\mathcal{O}}
\global\long\def\cD{\mathcal{D}}
\global\long\def\cZ{\mathcal{Z}}
\global\long\def\cR{\mathcal{R}}
\global\long\def\cC{\mathcal{C}}

\global\long\def\fs{\mathfrak{s}}
\global\long\def\fp{\mathfrak{p}}
\global\long\def\fm{\mathfrak{m}}
\global\long\def\fX{\mathfrak{X}}
\global\long\def\fV{\mathfrak{V}}
\global\long\def\fx{\mathfrak{x}}
\global\long\def\fv{\mathfrak{v}}
\global\long\def\fY{\mathfrak{Y}}

\global\long\def\rv{\mathbf{\mathrm{v}}}
\global\long\def\rx{\mathrm{x}}
\global\long\def\rw{\mathrm{w}}
\global\long\def\ry{\mathrm{y}}
\global\long\def\rz{\mathrm{z}}
\global\long\def\bv{\mathbf{v}}
\global\long\def\bx{\mathbf{x}}
\global\long\def\bw{\mathbf{w}}
\global\long\def\sv{\mathsf{v}}
\global\long\def\sx{\mathsf{x}}
\global\long\def\sw{\mathsf{w}}

\global\long\def\Spec{\mathrm{Spec}\,}
\global\long\def\Hom{\mathrm{Hom}}

\global\long\def\Var{\mathrm{Var}}
\global\long\def\Gal{\mathrm{Gal}}
\global\long\def\Jac{\mathrm{Jac}}
\global\long\def\Ker{\mathrm{Ker}}
\global\long\def\Im{\mathrm{Im}}
\global\long\def\Aut{\mathrm{Aut}}
\global\long\def\st{\mathrm{st}}
\global\long\def\diag{\mathrm{diag}}
\global\long\def\characteristic{\mathrm{char}}
\global\long\def\tors{\mathrm{tors}}
\global\long\def\sing{\mathrm{sing}}
\global\long\def\red{\mathrm{red}}
\global\long\def\Ind{\mathrm{Ind}}
\global\long\def\nr{\mathrm{nr}}
\global\long\def\ord{\mathrm{ord}}
\global\long\def\pt{\mathrm{pt}}
\global\long\def\op{\mathrm{op}}
 \global\long\def\univ{\mathrm{univ}}
\global\long\def\length{\mathrm{length}}
\global\long\def\sm{\mathrm{sm}}
\global\long\def\top{\mathrm{top}}
\global\long\def\rank{\mathrm{rank}}
\global\long\def\Mot{\mathrm{Mot}}
\global\long\def\age{\mathrm{age}\,}
\global\long\def\et{\mathrm{et}}
\global\long\def\hom{\mathrm{hom}}
\global\long\def\tor{\mathrm{tor}}
\global\long\def\reg{\mathrm{reg}}

\global\long\def\Conj#1{\mathrm{Conj}(#1)}
\global\long\def\Mass#1{\mathrm{Mass}(#1)}
\global\long\def\Inn#1{\mathrm{Inn}(#1)}
\global\long\def\bConj#1{\mathbf{Conj}(#1)}
\global\long\def\Hilb{\mathrm{Hilb}}
\global\long\def\sep{\mathrm{sep}}
\global\long\def\GL#1#2{\mathrm{GL}_{#1}(#2)}
\global\long\def\codim{\mathrm{codim}}

\global\long\def\GEx{G\text{-}\mathrm{Ex}}
\global\long\def\GCov{G\text{-}\mathrm{Cov}}
\global\long\def\preuntwisting{\left\langle F\right\rangle }
\global\long\def\Tr{\mathrm{Tr}}
\global\long\def\Nr{\mathrm{Nr}}
\global\long\def\Eta{\mathrm{Eta}}
\global\long\def\Fie{\mathrm{Fie}}

\tableofcontents{}

\section{Introduction}

The McKay correspondence in terms of stringy invariants was first
studied by Batyrev and Dais \cite{MR1404917}, and Batyrev \cite{MR1677693}.
Denef and Loeser \cite{MR1905024} later took a more conceptual approach,
where the McKay correspondence directly follows from the theory of
motivic integration suitably generalized to a situation involving
finite group actions. 

These works were confined to characteristic zero. Except some works
for the tame case (the finite group has order coprime to the characteristic),
an attempt of generalization to arbitrary characteristics, including
the wild (non-tame) case, was only recently started in \cite{Yasuda:2012fk,Yasuda:2013fk}.
There it was formulated a conjectural generalization of results in
characteristic zero. Subsequently it turned out in \cite{Wood-Yasuda-I}
that the conjecture is closely related to the number theory, in particular,
the problem of counting local Galois representations. In these papers,
however, only linear actions on affine spaces have been discussed.
In characteristic zero, since every finite group action on a smooth
variety is locally linearizable, many studies can be reduced to the
linear case. This is no longer true in positive or mixed characteristic.
The conjecture has been verified in very special cases, by computing
stringy invariants explicitly. The aims of this paper are firstly
to generalize the conjecture to non-linear actions on a (possibly
singular) affine variety, and secondly to make it possible to compute
stringy invariants in more complicated examples.

We now recall the conjecture from \cite{Yasuda:2013fk}. Set the base
scheme to be $D=\Spec\cO_{D}$ with $\cO_{D}$ a complete discrete
valuation ring and suppose that its residue field, denoted by $k$,
is algebraically closed. 
\begin{rem}
Working over a discrete valuation ring rather than a field is natural
in our arguments. We can easily switch from a field to a discrete
valuation ring by the base change by $\Spec k[[t]]\to\Spec k$.
\end{rem}
We consider a linear action of a finite group $G$ on the affine $d$-space
$V=\AA_{D}^{d}$ over $D$ and the associated quotient scheme $X:=V/G$. 
\begin{conjecture}[The wild McKay correspondence conjecture\emph{ \cite{Yasuda:2013fk}}]
\label{conj: linear McKay intro}Let $o\in X(k)$ denote the image
of the origin and $M_{\st}(X)_{o}$ the stringy motif of $X$ at $o$.
Suppose that the quotient morphism $V\to X$ is \'{e}tale in codimension
one. Then 
\[
M_{\st}(X)_{o}=\int_{\GCov(D)}\LL^{\bw}\, d\tau.
\]
Here $\GCov(D)$ is the (conjectural) moduli space of $G$-covers
of $D$, $\bw$ is the \emph{weight function} on $\GCov(D)$ associated
to the $G$-representation $V$ and $\tau$ is the tautological motivic
measure on $\GCov(D)$. 
\end{conjecture}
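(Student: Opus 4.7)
The plan is to adapt the Denef--Loeser motivic integration argument for the classical (characteristic zero) McKay correspondence to the wild setting, replacing ordinary arcs on $X$ by \emph{twisted} arcs, that is, pairs consisting of a $G$-cover $E\to D$ and a $G$-equivariant morphism $E\to V$. By definition of the stringy motif, one has
\[
M_{\st}(X)_{o}=\int_{(\pi_{0}^{X})^{-1}(o)}\LL^{-\ord\,K_{X/D}}\,d\mu_{X},
\]
where $\pi_{0}^{X}$ is the evaluation at the closed point. The first step is to identify the arcs on $X$ through $o$, stratified by their normalized pullback to $V$, with twisted arcs: an arc $\Spec k[[t]]\to X$ lifts, after base change along an essentially unique $G$-cover $E\to D$, to a $G$-equivariant arc $E\to V$ hitting the origin. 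This is the analogue for $G$-covers of the fact, used in characteristic zero, that arcs on $V/G$ come from arcs on $V$ modulo $G$.

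Second, for each fixed $G$-cover $E$, I would compute the motivic measure of the space of $G$-equivariant arcs $E\to V$ supported over $o$ directly from the $G$-representation $V$; this gives a twisted jet space whose codimension encodes the representation-theoretic invariant packaged as the weight $\bw(E)$. The hypothesis that $V\to X$ is \'{e}tale in codimension one concentrates the ramification divisor of the quotient morphism on a proper closed subset, so the change-of-variables formula applied to $V\to X$, combined with the standard identification of $\ord\,K_{X/D}$ with the order of the Jacobian of this morphism, should convert $\LL^{-\ord\,K_{X/D}}$ into $\LL^{\bw}$ on the twisted side. Assembling these fiberwise contributions by integrating over $\GCov(D)$ with respect to the tautological measure $\tau$, and applying a Fubini-type result, should recover the right-hand side.

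The principal obstacle is genuinely analytic and not merely formal. In the tame case, $\GCov(D)$ is a finite set indexed by conjugacy classes of $G$, and both $\tau$ and $\bw$ reduce to familiar combinatorial data (the usual age). In the wild case $\GCov(D)$ is a positive-dimensional, highly non-reduced moduli, and the motivic measure on it has to be constructed by stratifying according to discriminant and higher ramification filtrations; one then needs the wild ramification contribution to the different of $V\to X$ to match the weight $\bw$ exactly, not just up to a bounded error. Establishing a well-defined $\GCov(D)$, a motivic change of variables for the stacky, wildly ramified map $V\to X$, and convergence of $\int_{\GCov(D)}\LL^{\bw}\,d\tau$ for the representations covered by the statement are the steps where the real difficulty is concentrated, and are presumably the reason the statement is formulated as a conjecture rather than a theorem.
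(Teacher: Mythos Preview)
Your outline matches the paper's heuristic derivation at the top level: express $M_{\st}(X)_{o}$ as a motivic integral over arcs of $X$, lift these to $G$-arcs $E\to V$, compute the contribution of each fixed $E\in\GCov(D)$, and integrate over $\GCov(D)$. You also correctly diagnose why the statement remains a conjecture: the existence of $\GCov(D)$ with a countable finite-dimensional stratification, and the wild change-of-variables formula, are themselves conjectural inputs.

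Where your sketch diverges from the paper is the crucial second step. You propose to compute the measure of the space of $G$-equivariant arcs $E\to V$ ``directly from the $G$-representation'' as a codimension of a twisted jet space; the paper instead goes through the \emph{untwisting} construction. For a connected component $F\subset E$ with stabilizer $H$, one forms the tuning module $\Xi_{F}=\Hom_{\cO_{D}}^{H}(M,\cO_{F})$ and an auxiliary affine space $V^{|F|}\cong\AA_{D}^{d}$ with a morphism $p^{|F|}:V^{|F|}\to X$, so that $E$-twisted $G$-arcs of $V$ correspond bijectively to \emph{ordinary} arcs $D\to V^{|F|}$. The weight then emerges from the identity $\bj_{p^{|F|}}-\codim((V_{0})^{H},V_{0})=\bj_{p}-\bw_{V}$, which in turn rests on the explicit computation $\Jac_{V^{\langle F\rangle}/V\times_{D}F}=\fm_{F}^{\sharp H\cdot\bv_{V}(F)}\cO_{V^{\langle F\rangle}}$. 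Without this reduction to ordinary arcs, there is no known way to even define a motivic measure on the space of $G$-arcs in the wild case, so your ``direct'' computation is not actionable as stated.

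One small correction: the hypothesis that $V\to X$ is \'etale in codimension one is not there to ``concentrate the ramification divisor on a proper closed subset'' (the ramification locus is always a proper closed subset). Its role is to force the ramification locus to have codimension $\ge 2$, so that $p:V\to X$ is crepant with \emph{zero} boundaries on both sides; this is what allows one to write $M_{\st}(\fX)=M_{\st}(X)_{o}$ for $\fX=(X,0,\{o\})$ and to obtain the clean formula $M_{\st,C_{G}(H)}(\fV^{|F|})=\LL^{\bw_{V}(E)}$ (Example in the paper). Without this hypothesis the conjecture still makes sense, but one must carry a nontrivial boundary on $X$ throughout.
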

In \cite{Yasuda:2012fk}, Conjecture \ref{conj: linear McKay intro}
was verified, when $\cO_{D}=k[[t]]$ with $k$ of characteristic $p>0$
and $G$ is the cyclic group of order $p$. In \cite{Wood-Yasuda-I},
a variant conjecture was verified when the symmetric group $S_{n}$
acts on $\AA_{D}^{2n}$ by two copies of the standard representation.
In the same paper, a generalization to the case where $k$ is only
perfect was formulated by modifying the function $\bw$. 

Roughly the conjecture was derived as follows: we first express $M_{\st}(X)_{o}$
as a motivic integral over the space of \emph{arcs }of $X$, that
is, $D$-morphisms $D\to X$. We then transform the motivic integral
to a motivic integral over the space of $G$\emph{-arcs} of $V$,
that is, $G$-equivariant $D$-morphisms $E\to V$ for $G$-covers
$E\to D$. We can see that the contribution of each $G$-cover $E\to D$
to $M_{\st}(X)_{o}$ is $\LL^{\bw(E)}$, and hence the conjecture.
At this last point, we used the technique of \emph{untwisting, }which
enables us to reduce the study of $G$-arcs to the one of ordinary
arcs. A prototype of untwisting was introduced by Denef and Loeser
\cite{MR1905024}. In \cite{Yasuda:2013fk}, the author developed
it so that we can use it even in the wild case. In this paper, we
will refine the technique slightly more. For each $G$-cover $E$
of $D$ with a connected component $F$, we can construct another
affine space $V^{|F|}\cong\AA_{D}^{d}$ and a morphism $V^{|F|}\to X$
such that there is a correspondence between $G$-arcs of $V$ and
ordinary arcs of $V^{|F|}$. Through the correspondence, we can represent
the contribution of $E$ to $M_{\st}(X)_{0}$ as a motivic integral
over the ordinary arcs $D\to V^{|F|}$. 

Our strategy of generalization to the non-linear case is quite simple:
given an affine variety $\sv$ with a $G$-action, we equivariantly
embed $\sv$ into an affine space $V$ with a linear $G$-action.
For each $G$-cover $E\to D$ with a connected component $F$, we
take the subvariety $\sv^{|F|}\subset V^{|F|}$ corresponding to $\sv\subset V$,
which plays the same role as $V^{|F|}$ in the linear case.

We also need an idea from the minimal model program, that is, working
with varieties endowed with divisors rather than varieties themselves.
Encapsulating one more information, we will introduce the notion of
\emph{centered log structures }or \emph{centered log $D$-varieties},
which are just triples $\fX=(X,\Delta,W)$ of a normal $D$-variety
$X$, a $\QQ$-divisor $\Delta$ and a closed subset $W$ of $X\otimes_{\cO_{D}}k$
with $K_{X/D}+\Delta$ $\QQ$-Cartier. It is straightforward to generalize
the stringy motif to centered log $D$-varieties. We write it as $M_{\st}(\fX)$.
For instance, the stringy motif $M_{\st}(X)_{o}$ mentioned above
is the same as $M_{\st}((X,0,\{o\}))$. 

Returning to the equivariant immersion $\sv\hookrightarrow V$, if
$\sv$ is given a centered log structure $\fv=(\sv,\delta,\sw)$,
then there exist unique centered log structures $\fx$ on $\sx:=\sv/G$
and $\fv^{|F|,\nu}$ on the normalization $\sv^{|F|,\nu}$ of $\sv^{|F|}$
so that all the morphisms connecting them are crepant (see Section
\ref{sub:Crepant-morphisms} for details). If $H\subset G$ is the
stabilizer of the component $F\subset E$, then the centralizer $C_{G}(H)$
of $H$ acts on $\sv^{|F|,\nu}$ and on its arc space $J_{\infty}\sv^{|F|,\nu}$.
We will define $M_{\st,C_{G}(H)}(\fv^{|F|,\nu})$ in the same way
as defining the ordinary stringy motif except that we will use the
quotient space $(J_{\infty}\sv^{|F|,\nu})/C_{G}(H)$ rather than the
arc space $J_{\infty}\sv^{|F|,\nu}$ itself. We formulate the following
conjecture which generalizes Conjecture \ref{conj: linear McKay intro}. 
\begin{conjecture}[Conjecture \ref{conj: non-linear McKay-1}]
\label{conj: in intro}We have
\[
M_{\st}(\fx)=\int_{\GCov(D)}M_{\st,C_{G}(H)}(\fv^{|F|,\nu})\, d\tau.
\]

\end{conjecture}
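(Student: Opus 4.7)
The plan is to follow the architecture of the derivation of Conjecture \ref{conj: linear McKay intro} in \cite{Yasuda:2013fk}, but to replace each step with its centered-log, non-linear analogue. First, by definition, $M_{\st}(\fx)$ is realized as a motivic integral over the arc space $J_{\infty}\sx$ of $\sx = \sv/G$, with integrand built from $K_{\sx/D}+\delta$ and the support condition coming from $\sw$. Using the equivariant closed immersion $\sv\hookrightarrow V$ into an affine space with linear $G$-action, this integral is transported to one over the space of $G$-equivariant arcs of $\sv$, namely $G$-equivariant $D$-morphisms $E\to\sv$ for $G$-covers $E\to D$; the collection of such $G$-arcs is naturally fibered over $\GCov(D)$, and the change of variable from ordinary arcs of $\sx$ to $G$-arcs of $\sv$ produces a Jacobian which is absorbed into the discrepancy data on $\fx$ by crepancy.

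Next, one analyses each fiber. Fix a $G$-cover $E\to D$ with connected component $F$, and let $H$ be the stabilizer of $F$ in $G$. A $G$-equivariant morphism $E\to\sv$ is determined by its restriction to $F$, which is an $H$-equivariant $D$-morphism $F\to\sv$. The refined untwisting technique sketched in the introduction produces, from the embedding $\sv\hookrightarrow V$, a closed subvariety $\sv^{|F|}\subset V^{|F|}\cong\AA_{D}^{d}$ together with a natural bijection between $H$-equivariant morphisms $F\to\sv$ and ordinary $D$-morphisms $D\to\sv^{|F|}$; passing to the normalization $\sv^{|F|,\nu}$ yields the model on which the fibral motivic integral actually lives. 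The residual $C_{G}(H)$-action on $\sv^{|F|,\nu}$ records the choice of the component $F$ within its $G$-orbit in $E$, so the contribution of the $G$-cover $E$ is computed as an integral on the quotient $(J_{\infty}\sv^{|F|,\nu})/C_{G}(H)$, which is precisely $M_{\st,C_{G}(H)}(\fv^{|F|,\nu})$ by definition. Integrating these fibral contributions against the tautological motivic measure $\tau$ on $\GCov(D)$ then yields
\[
M_{\st}(\fx)=\int_{\GCov(D)}M_{\st,C_{G}(H)}(\fv^{|F|,\nu})\,d\tau.
\]
The crepancy of the morphisms connecting $\fv$, $\fx$, and $\fv^{|F|,\nu}$, arranged in Section \ref{sub:Crepant-morphisms}, is what makes the residual weight function $\bw$ of the linear case disappear: all discrepancy information has been folded into the centered log structures themselves.

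The main obstacle is establishing the untwisting correspondence and the attendant change-of-variables formula in the non-linear, possibly singular setting. In the linear case, smoothness of $V$ allowed one to pass freely between arcs and jets and to control Jacobians via weights of the $G$-representation; when $\sv$ is singular one must work on $\sv^{|F|}$ and its normalization, and verify that the discrepancy along the conductor of the normalization, together with the discrepancy of $\sv^{|F|}\subset V^{|F|}$, combine correctly with $\delta$ to reproduce the integrand coming from $M_{\st}(\fx)$. A secondary difficulty lies in making sense of the quotient motivic measure defining $M_{\st,C_{G}(H)}$ for a non-free $C_{G}(H)$-action on a possibly singular arc space, and in ensuring that the still-conjectural moduli $\GCov(D)$ together with its tautological measure $\tau$ interacts correctly with these quotients. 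These are precisely the points where the general proof remains out of reach and where the verification must, for the moment, be carried out case by case in the examples treated later in the paper.
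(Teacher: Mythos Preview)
Your proposal is not a proof but a heuristic derivation, and this is appropriate: the statement is a \emph{conjecture}, and the paper itself offers only a heuristic motivation for it in Section~\ref{sec:The-McKay-correspondence-non-linear}, not a proof. Your outline matches the paper's reasoning essentially step for step: express $M_{\st}(\fx)$ as a motivic integral, lift to $G$-arcs of $\sv$, decompose over $\GCov(D)$, and for each $E$ use the untwisting correspondence $J_{\infty}^{G,E}\sv\leftrightarrow(J_{\infty}\sv^{|F|})/C_{G}(H)$ together with a (conjectural) change of variables for the almost bijection $J_{\infty}\fv^{|F|,\nu}\to J_{\infty}^{E}\fx$ to obtain $M_{\st}^{E}(\fx)=M_{\st,C_{G}(H)}(\fv^{|F|,\nu})$; then integrate over $\GCov(D)$.

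One small correction of emphasis: you write that the weight function $\bw$ of the linear case ``disappears'' because discrepancy is folded into the centered log structures. In the paper's logic the weight does not so much disappear as get bypassed: in the non-linear case one does \emph{not} pass through a motivic measure on $J_{\infty}^{G}\sv$ at all (see the Remark following Definition~7.2), precisely because the author does not know how to define one. Instead the $E$-part $M_{\st}^{G,E}(\fv)$ is \emph{defined} to be $M_{\st,C_{G}(H)}(\fv^{|F|,\nu})$, and the conjecture then compares this directly with $M_{\st}^{E}(\fx)$ via the change of variables for $\sv^{|F|,\nu}\to\sx$. So the step you describe as ``transport to $G$-arcs of $\sv$ with a Jacobian absorbed by crepancy'' is not actually carried out in the paper's derivation; the untwisting variety is the sole intermediary. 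Your identification of the remaining obstacles---the change of variables in the singular setting, the quotient measure, and the conjectural status of $\GCov(D)$---is accurate and matches the paper's own caveats.
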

We will verify Conjecture \ref{conj: in intro} in two examples from
the simplest ones, computing both sides of the equality independently.
One example is a tame action on a singular variety, the other a wild
non-linear action on a smooth variety. 

Keeping arguments above in mind, let us return to the linear case.
One difficulty in computing the right side of the equality in Conjecture
\ref{conj: linear McKay intro} is computing weights $\bw(E)$ explicitly,
and another is computing the moduli space $\GCov(D)$. Taking an equivariant
immersion $\sv\hookrightarrow V$ is useful also in solving the former
difficulty for some linear actions. With $E$, $F$ and $H$ as before,
if $V=\AA_{D}^{d}$ has a linear $G$-action and if $V_{0}:=V\otimes_{\cO_{D}}k$,
then the weight of $E$ with respect to $V$ is, by definition, 
\[
\bw_{V}(E)=\codim(V_{0}^{H},V_{0})-\bv_{V}(E)
\]
with another function $\bv_{V}$ on $\GCov(D)$ and $V_{0}^{H}$ the
$H$-fixed point locus in $V_{0}$. The first term, $\codim(V_{0}^{H},V_{0})$,
is easy to compute, while the second generally not. However, if $G$
acts on $V$ by permutations of coordinates, then $\bv_{V}(E)$ is
represented in terms of the discriminant \cite{Wood-Yasuda-I}: in
this situation, we can associate a degree $d$ cover $C\to D$ to
a $G$-cover $E\to D$ and 
\[
\bv_{V}(E)=\frac{d_{C/D}}{2}
\]
with $d_{C/D}$ the discriminant exponent of the cover $C\to D$.
We generalize this equality to hyperplanes in $V$ defined by a $G$-invariant
linear form. For simplicity, we consider the case where $\sv\subset V$
is defined by 
\[
x_{1}+\cdots+x_{d}=0
\]
with $x_{1},\dots,x_{d}$ coordinates of $V$. 
\begin{prop}[See Corollary \ref{cor:weight non-permutation} for a slightly more
general result]
Let $C=\bigsqcup_{j=1}^{l}C_{j}$ be the decomposition of $C$ into
the connected components. Then
\[
\bv_{\sv}(E)=\frac{d_{C/D}}{2}-\min\left\{ \left\lfloor \frac{d_{C_{j}/D}}{[C_{j}:D]}\right\rfloor \mid1\le j\le l\right\} 
\]
with $[C_{j}:D]$ the degree of $C_{j}\to D$.
\end{prop}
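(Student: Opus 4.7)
The plan is to pull the linear form $L=x_1+\cdots+x_d$ back to $V^{|F|}$ through the untwisting, read off its $\pi_D$-content, and convert that content into the correction to $\bv_V(E)=d_{C/D}/2$ via adjunction. I will proceed in three steps.

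\smallskip

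\emph{Step 1 (trace formula for $L^{|F|}$).} First I decompose $L=\sum_{j=1}^l L_j$ with $L_j=\sum_{i\in\mathrm{orbit}_j}x_i$ according to the orbits of $G$ on $\{1,\ldots,d\}$; the orbit of size $d_j:=[C_j:D]$ corresponds to the connected component $C_j$ of $C=E/H$. Under the untwisting correspondence between $G$-arcs of $V$ and ordinary arcs of $V^{|F|}$, the $G$-invariant orbit sum $L_j$ pulls back to $\Tr_{C_j/D}(y_j)$, where $y_j$ is the tautological coordinate of $\AA_{C_j}^1$ viewed as the $C_j$-factor of $V^{|F|}$. Since $k$ is algebraically closed, each $C_j/D$ is totally ramified of degree $d_j$; fixing a uniformizer $\pi_j\in\cO_{C_j}$ and expanding $y_j=\sum_{k=0}^{d_j-1}y_{j,k}\pi_j^k$ in the power basis $1,\pi_j,\ldots,\pi_j^{d_j-1}$ of $\cO_{C_j}$ over $\cO_D$, I get
\[
L^{|F|}=\sum_{j=1}^{l}\sum_{k=0}^{d_j-1}\Tr_{C_j/D}(\pi_j^k)\cdot y_{j,k}
\]
as an honest linear form on $V^{|F|}\cong\Spec\cO_D[y_{j,k}]$.

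\smallskip

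\emph{Step 2 ($\pi_D$-content of $L^{|F|}$).} Next I compute the $\pi_D$-content of each $L_j^{|F|}$ by combining (a) the duality identity $\mathfrak{d}_{C_j/D}^{-1}=\{x\mid\Tr_{C_j/D}(x\cO_{C_j})\subset\cO_D\}$, which together with $\mathfrak{d}_{C_j/D}=\pi_j^{d_{C_j/D}}\cO_{C_j}$ and the maximality of $\mathfrak{d}_{C_j/D}^{-1}$ yields $\Tr_{C_j/D}(\pi_j^{-d_{C_j/D}}\cO_{C_j})=\cO_D$, and (b) the scaling $\Tr_{C_j/D}(\pi_j^{m+d_j}\cO_{C_j})=\pi_D\cdot\Tr_{C_j/D}(\pi_j^m\cO_{C_j})$ coming from $\pi_j^{d_j}=u\pi_D$ for a unit $u$. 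Together these give $\Tr_{C_j/D}(\cO_{C_j})=\pi_D^{\lfloor d_{C_j/D}/d_j\rfloor}\cO_D$, so $L_j^{|F|}=\pi_D^{m_j}\widetilde L_j$ with $m_j:=\lfloor d_{C_j/D}/d_j\rfloor$ and $\widetilde L_j$ a linear form whose coefficient vector generates the unit ideal. Setting $m:=\min_j m_j$, and noting that the $\widetilde L_j$ involve disjoint subsets of the variables $y_{j,k}$, I conclude that $L^{|F|}=\pi_D^m\bigl(\sum_j\pi_D^{m_j-m}\widetilde L_j\bigr)$ has $\pi_D$-content exactly $\pi_D^m$.

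\smallskip

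\emph{Step 3 (crepancy and conclusion).} Finally, the flat closure $\sv^{|F|,\nu}\hookrightarrow V^{|F|}$ of the generic fibre of $\sv^{|F|}$ is cut out not by $L^{|F|}$ but by $L^{|F|}/\pi_D^m$. Transferring the centered log structure on $V^{|F|}$ to $\sv^{|F|,\nu}$ crepantly via adjunction along this hypersurface absorbs the factor $\pi_D^m$ as a shift of $-m$ in the log discrepancy along the special fibre; fed into the motivic-integral definition of $\bv$ on the arc space of $\sv^{|F|,\nu}$, this shift produces $\bv_{\sv}(E)=\bv_V(E)-m=d_{C/D}/2-\min_j\lfloor d_{C_j/D}/d_j\rfloor$, as claimed.

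\smallskip

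The main obstacle I anticipate is Step 1: correctly identifying the untwisted pullback $L^{|F|}$ with the explicit trace formula above. This requires matching the abstract untwisting description of $V^{|F|}$ (built from the $G$-cover $E$) with the decomposition of the permutation module $\cO_D^{\oplus d}=\bigoplus_j\cO_D^{\oplus d_j}$ into the $\cO_D$-modules $\cO_{C_j}$, under which each orbit sum $L_j$ precisely becomes $\Tr_{C_j/D}$. Once this identification is in place, the rest is a standard valuation computation with the different together with the routine crepancy/adjunction bookkeeping already used elsewhere in the paper.
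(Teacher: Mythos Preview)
Your approach is essentially the paper's: compute $u_F^{*}(f)$ as a sum of traces (this is the computation leading to Proposition~\ref{prop: compute b hyperplane permutation}), determine its $\pi$-content via the inverse different (the paper's trace lemma, proved the same way from \cite[p.~50, Prop.~7]{MR554237}), and then compare the two crepant descriptions of the boundary of $\fv^{|F|}$---the direct one from Lemma~\ref{lem:Delta transform} applied to $\sv$ versus the adjunction formula of Corollary~\ref{cor: hypersurface by an invariant polynomial}---to conclude $\bv_{\sv}(E)=\bv_V(E)-m$. One correction to Step~1: the decomposition must be by $H$-orbits on $\{1,\dots,d\}$, not $G$-orbits, since these depend on the chosen $E$ and are what index the connected components $C_j=F/H_j$ (in particular $C$ is not $E/H$).
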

Using this and assuming a motivic version of Krasner's formula \cite{MR0225756}
for counting local field extensions, we explicitly compute 
\[
\int_{\GCov(D)}\LL^{-\bv_{3\sv}}\, d\tau\text{ and }\int_{\GCov(D)}\LL^{\bw_{3\sv}}\, d\tau,
\]
when $G=S_{4}$ acts on $V\cong\AA_{D}^{4}$ by the standard representation,
$\sv\subset V$ is given by $x_{1}+x_{2}+x_{3}+x_{4}=0$ and $3\sv$
is the direct sum of three copies of $\sv$. We find that the two
integrals are dual to each other. The same kind of duality was observed
in \cite{Wood-Yasuda-I} and will be discussed in \cite{Wood-Yasuda-II}
in more details. Obtained formulas for these integrals are thought
of as (motivic) variants of mass formulas for local Galois representations
\cite{MR2354798,MR2354797,MR2411405} with respect to weights coming
from a non-permutation representation.

From Section \ref{sec:Motivic-integration-and} to \ref{sec:The-McKay-correspondence-linear},
we review the theory in the linear case and finally formulate the
McKay correspondence for linear actions. Most materials here are not
new and found for instance in \cite{Yasuda:2013fk}, although arguments
are refined and adjusted to our purpose. In Section \ref{sec:The-McKay-correspondence-non-linear},
we formulate the McKay correspondence for non-linear actions. In Section
\ref{sec:Computing-boundaries}, we study how to determine the centered
log structure $\fv^{|F|,\nu}$ under some assumptions. In Sections
\ref{sec:A-tame-example} and \ref{sec:A-wild-example}, we will compute
non-linear examples. In Sections \ref{sec:stable hyperplanes} and
\ref{sec: example S4}, we treat hyperplanes in permutation representations.
We will end the paper with concluding remarks in Section \ref{sec:Concluding-remarks}.

\subsection{Acknowledgments}

I wish to thank Yusuke Nakamura and Shuji Saito for useful discussions
directing my interests to non-linear actions, Johannes Nicaise and
Julien Sebag for their kind answers to my questions on motivic integration
over formal schemes, and Melanie Wood for stimulating discussions
in our joint work.

\subsection{Convention and notation}

If $X$ is an affine scheme, $\cO_{X}$ denotes its coordinate ring.
By the same symbol $\cO_{X}$, we sometimes denote also the structure
sheaf on a scheme $X$. This abuse of notation would not cause any
problem. When a group $G$ acts on $X$ from left, then we suppose
that $G$ acts on $\cO_{X}$ from right: for $g\in G$, if $\phi_{g}:X\to X$
is the $g$-action on $X$, then $g$ acts on $\cO_{X}$ by the pull-back
of functions by $\phi_{g}$. Throughout the paper, we fix an affine
scheme $D$ with $\cO_{D}$ a complete discrete valuation field. We
denote the residue field of $\cO_{D}$ by $k$ and suppose that $k$
is algebraically closed. For an integral scheme $X$, we denote by
$K(X)$ its function field. If $X$ is affine, then $K(X)$ is the
fraction (quotient) field of the ring $\cO_{X}$. Again, by abuse
of notation, $K(X)$ also denotes the constant sheaf on $X$ associated
to the function field. For a $D$-scheme $X$, we denote by $X_{0}$
the special fiber with the reduced structure: $X_{0}:=(X\times_{D}\Spec k)_{\red}$.

\section{Motivic integration and stringy motifs\label{sec:Motivic-integration-and}}

In this section, we review the theories of motivic integration over
ordinary (untwisted) arcs and stringy invariants, mainly developed
in \cite{Kontsevich-motivic,MR1664700,MR1672108,MR1677693,MR1905024,MR2075915}.

\subsection{Centered log varieties}

We call an integral $D$-scheme $X$ a $D$\emph{-variety }if $X$
is flat, separated and of finite type over $D$ and $X$ is smooth
over $D$ at the generic point of $X$. For a $D$-variety $X$, we
denote the smooth locus of $X$ by $X_{\sm}$ and the regular locus
by $X_{\reg}$. 

Let $X$ be a normal $D$-variety. We can define the \emph{canonical
sheaf $\omega_{X}=\omega_{X/D}$ }of $X$ over $D$ as in \cite[page 8]{MR3057950}.
On $X_{\sm}$, the canonical sheaf is isomorphic to $\bigwedge^{d}\Omega_{X/D}$
with $d$ the relative dimension of $X$ over $D$. Therefore we can
think of $\omega_{X}$ as a subsheaf of $(\bigwedge^{d}\Omega_{X/D})\otimes K(X)$.
We define the \emph{canonical divisor} of $X$, denoted by $K_{X}=K_{X/D}$,
to be the linear equivalence class of Weil divisors corresponding
to $\omega_{X}$. 

A \emph{log $D$-variety }is a pair $(X,\Delta)$ of a normal $D$-variety
$X$ and a Weil $\QQ$-divisor $\Delta$ such that $K_{X}+\Delta$
is $\QQ$-Cartier. We call $\Delta$ the \emph{boundary }of the log
variety. The \emph{canonical divisor }of a log $D$-variety $(X,\Delta)$
is $K_{(X,\Delta)}:=K_{X}+\Delta$.

A \emph{centered log $D$-variety }is a triple $\mathfrak{X}=(X,\Delta,W)$
such that $(X,\Delta)$ is a log $D$-variety and $W$ is a closed
subset of $X_{0}$, where $X_{0}$ denotes the special fiber of the
structure morphism $X\to D$ with the reduced structure. We call $W$
the \emph{center }of $\fX$. We also say that $\fX$ is a \emph{centered
log structure} on $X$. For a centered log variety $\mathfrak{X}=(X,\Delta,W)$,
we define a \emph{canonical divisor }of $\fX$ as the one of $(X,\Delta)$:
\[
K_{\fX}:=K_{(X,\Delta)}=K_{X}+\Delta.
\]

sometimes, we identify a normal $\QQ$-Gorenstein ($K_{X}$ is $\QQ$-Cartier)
$D$-variety $X$ with the log $D$-variety $(X,0)$, and identify
a log $D$-variety $(X,\Delta)$ with the centered log $D$-variety
$(X,\Delta,X_{0})$:
\begin{equation}
\xymatrix{\left\{ \substack{\text{normal \ensuremath{\QQ}-Gorenstein}\\
\text{\ensuremath{D}-varieties}
}
\right\} \ar@{^{(}->}[r] & \left\{ \text{log \ensuremath{D}-varieties}\right\} \ar@{^{(}->}[r] & \left\{ \text{centered log \ensuremath{D}-varieties}\right\} \\
X\ar@{|->}[r] & (X,0)\\
 & (X,\Delta)\ar@{|->}[r] & (X,\Delta,X_{0})
}
\label{eq:identifications center log}
\end{equation}

\subsection{Crepant morphisms\label{sub:Crepant-morphisms}}

For centered log $D$-varieties $\fX=(X,\Delta,W)$ and $\fX'=(X',\Delta',W')$,
a \emph{morphism $f:\fX\to\fX'$} is just a morphism $f:X\to X'$
of the underlying varieties with $f(W)\subset W'$. We say that a
morphism $\fX\to\fX'$ is \emph{proper }or \emph{birational }if it
is so as the morphism $f:X\to X'$ of the underlying varieties. We
say that a morphism $f:\fX\to\fX'$ is \emph{crepant }if 
\[
f^{-1}(W')=W\text{ and }K_{\fX}=f^{*}K_{\fX'}.
\]
The right equality should be understood that for $r\in\ZZ_{>0}$ such
that $r(K_{X}+\Delta)$ and $r(K_{X'}+\Delta')$ are Cartier, we have
a natural isomorphism
\[
\omega_{X'}^{[r]}(r\Delta')\cong f^{*}\omega_{X}^{[r]}(r\Delta).
\]
Here $\omega_{X}^{[r]}(r\Delta)$ is the invertible sheaf which is
identical to $\omega_{X}^{\otimes r}(r\Delta)$ on $X_{\reg}$. We
adopt this convention throughout the paper. 

Given a generically \'{e}tale morphism $f:X\to X'$ of normal $D$-varieties,
a centered log structure $\fX'$ on $X'$ induces a unique centered
log structure $\fX$ on $X$ such that the morphism $f:\fX\to\fX'$
is crepant. Conversely, if $f:X\to X'$ is additionally proper, then
for each centered log structure $\fX$ on $X$, there exists at most
one centered log structure on $\fX'$ such that $f:\fX\to\fX'$ is
crepant. 
\begin{rem}
For our purpose, we may slightly weaken the assumptions in the definition
of crepant morphisms. For instance, concerning the equality $f^{-1}(W')=W$,
we only need this equality outside $X_{\sm}\setminus X_{\reg}$. This
is because the locus $X_{\sm}\setminus X_{\reg}$ does not contribute
to stringy motifs at all, which will be defined below. However, for
simplicity, we will cling to our definition as above.
\end{rem}

\subsection{Motivic integration\label{sub:Motivic-integration}}

Let $\fX=(X,\Delta,W)$ be a centered log $D$-variety. An \emph{arc}
of $\fX$ is a $D$-morphism $D\to X$ sending the closed point of
$D$ into $W$. The \emph{arc space} of $\fX$, denoted $J_{\infty}\fX$,
is a $k$-scheme parameterizing the arcs of $\fX$. We put $D_{n}:=\Spec\cO_{D}/\fm_{D}^{n+1}$
with $\fm_{D}$ the maximal ideal of $\cO_{D}$. An \emph{$n$-jet}
of $\fX$ is a $D$-morphism $D_{n}\to X$ sending the unique point
of $D_{n}$ into $W$. For each $n$, there exists a $k$-scheme $J_{n}\fX$
parametrizing $n$-jets of $\fX$. For $n\ge m$, we have natural
morphisms $J_{n}\fX\to J_{m}\fX$ and the arc space $J_{\infty}\fX$
is identified with the projective limit of $J_{n}\fX$, $n\ge0$ with
respect to these maps. We have the induce maps 
\[
\pi_{n}:J_{\infty}\fX\to J_{n}\fX.
\]
For $n<\infty$, $J_{n}\fX$ are of finite type over $k$. For a morphism
$f:\fY\to\fX$ and each $n\in\ZZ_{\ge0}\cup\{\infty\}$, there exists
a natural map
\[
f_{n}:J_{n}\fY\to J_{n}\fX.
\]

The arc space $J_{\infty}\fX$ has the so-called \emph{motivic measure},
denoted by $\mu_{J_{\infty}\fX}$. The measure takes values in some
(semi-)ring, say $\cR$, which is often a suitable modification of
the Grothendieck (semi-)ring of $k$-varieties. In this paper, we
will fix $\cR$ satisfying the following properties: denoting by $[T]$
the class of a $k$-variety $T$ in $\cR$, we have 
\begin{itemize}
\item for a bijective morphism $S\to T$, we have $[S]=[T]$ in $\cR$,
\item putting $\LL:=[\AA_{k}^{1}]$, we have all fractional powers $\LL^{a}$,
$a\in\QQ$ in $\cR$,
\item an infinite series $\sum_{i=1}^{\infty}[T_{i}]\LL^{a_{i}}$ with $\lim_{i\to\infty}\dim T_{i}+a_{i}=-\infty$
converges, 
\item for a morphism $f:S\to T$ and for $n\in\ZZ_{\ge0}$, if every fiber
of $f$ admits a homeomorphism from or to the quotient $\AA_{k}^{n}/G$
for some linear action of a finite group $G$ on $\AA_{k}^{n}$, then
$[S]=[T]\LL^{n}$. 
\end{itemize}
One possible choice is the field of Puiseux series in $t^{-1}$, 
\[
\cR:=\bigcup_{r=1}^{\infty}\ZZ((t^{-1/r})),
\]
where we put $[T]$ to be the Poincar\'{e} polynomial as in \cite{MR2770561}.

A subset $A\subset J_{\infty}\fX$ is called \emph{stable }if there
exists $n\in\ZZ_{\ge0}$ such that $\pi_{n}(A)\subset J_{n}\fX$ is
a constructible subset and $A=\pi_{n}^{-1}\pi_{n}(A)$ and for every
$m\ge n$, every fiber of the map $\pi_{n+1}(A)\to\pi_{n}(A)$ is
homeomorphic to $\AA_{k}^{n}$. The measure of a stable subset $A$
is given by 
\[
\mu_{J_{\infty}\fX}(A):=[\pi_{n}(A)]\LL^{-nd}\quad(n\gg0).
\]
More generally, we can define the measure for \emph{measurable subsets},
which are roughly the limits of stable subsets. 

Let $\Phi:C\to\cR\cup\{\infty\}$ be a measurable function on a subset
$C\subset J_{\infty}\fX$, that is, the image of $\Phi$ is countable,
all fibers $\Phi^{-1}(a)$ are measurable and $\mu_{J_{\infty}\fX}(\Phi^{-1}(\infty))=0$.
We define 
\[
\int_{C}\Phi\,\mu_{J_{\infty}\fX}:=\sum_{a\in\cR}\mu_{J_{\infty}\fX}(\Phi^{-1}(a))\cdot a\in\cR\cup\{\infty\}.
\]

\subsection{Stringy invariants}

We still suppose that $\mathfrak{X}=(X,\Delta,W)$ is a centered log
$D$-variety. 
\begin{defn}
\label{def: order function}To a coherent ideal sheaf $I\ne0$ on
$X$ defining a closed subscheme $Z\subsetneq X$, we associate the
\emph{order function}, 
\[
\ord\, I=\ord\, Z:J_{\infty}\fX\to\ZZ_{\ge0}\cup\{\infty\},
\]
as follows: for an arc $\gamma:D\to X$, the pull-back $\gamma^{-1}I$
of $I$ is an ideal of $\cO_{D}$ and of the form $\fm_{D}^{l}$ for
some $l\in\ZZ_{\ge0}\cup\{\infty\}$, where we put $(0):=\fm_{D}^{\infty}$
by convention. For a fractional ideal $I$ (that is, a coherent $\cO_{X}$-submodule
of $K(X)$), if we write $I=I_{+}\cdot I_{-}^{-1}$ for ideal sheaves
$I_{+}$ and $I_{-}$ with $I_{-}$ locally principal, then we put
\[
\ord\, I:=\ord\, I_{+}-\ord\, I_{-}.
\]
Here we put $\ord\, I=\infty$ if either $\ord\, I_{+}=\infty$ or
$\ord\, I_{-}=\infty$. Similarly, for a $\QQ$-linear combination
$Z=\sum_{i=1}^{n}a_{i}Z_{i}$ of closed subschemes $Z_{i}\subsetneq X$,
we define
\[
\ord\, Z:=\sum_{i=1}^{n}a_{i}\cdot\ord\, Z_{i},
\]
taking values in $\QQ\cup\{\infty\}$. \end{defn}
\begin{rem}
For a closed subscheme $Z\subsetneq X$, we expect that $(\ord\, Z)^{-1}(\infty)$
has measure zero. The author does not know if this has been proved,
but this follows from the change of variables formula, if there exists
a resolution of singularities $f:\tilde{X}\to X$ so that $\tilde{X}$
is regular and $\tilde{X}_{0}\cup f^{-1}(Z)$ is a simple normal crossing
divisor. If the expectation is actually true, then order functions
for fractional ideals and $\QQ$-linear combination of closed subschemes
are well-defined modulo measure zero subsets. 
\end{rem}
Let $r\in\ZZ_{>0}$ be such that $rK_{\fX}$ is Cartier. Since the
sheaf $\cO_{X}(rK_{\fX})=\omega_{X}^{[r]}(r\Delta)$ is invertible
and thought of as a subsheaf of the constant sheaf $(\bigwedge^{d}\Omega_{X/D})^{\otimes r}\otimes K(X)$,
we can define a fractional ideal sheaf $I_{\fX}^{r}$ by the equality
of subsheaves of $(\bigwedge^{d}\Omega_{X/D})^{\otimes r}\otimes K(X)$,
\[
\left(\bigwedge^{d}\Omega_{X/D}\right)^{\otimes r}/\tors=I_{\fX}^{r}\cdot\cO_{X}(rK_{\fX}).
\]
We then put a function $\bf_{\fX}$ on $J_{\infty}\fX$ by
\[
\bf_{\fX}:=\frac{1}{r}\ord\, I_{\fX}^{r}.
\]
Since $(I_{\fX}^{r})^{n}=I_{\fX}^{r\cdot n}$, the function $\bf_{\fX}$
is independent of the choice of $r$. $ $If $X$ is smooth, then
we simply have $\bf_{\fX}=\ord\,\Delta$. 
\begin{defn}
The \emph{stringy motif }of $\fX$ is defined to be
\[
M_{\st}(\fX):=\int_{J_{\infty}\fX}\LL^{\bf_{\fX}}\, d\mu_{J_{\infty}\fX}.
\]
We also write $M_{\st}(\fX)=M_{\st}(X,\Delta)_{W}$ and sometimes
omit $\Delta$ if $\Delta=0$, and $W$ if $W=X_{0}$. When the integral
above converges, we call $\fX$ \emph{stringily log terminal. }When
diverges, we put $M_{\st}(\fX):=\infty$. \end{defn}
\begin{conjecture}
\label{conj: crepant equal stringy}If a morphism $f:\fX\to\fX'$
of centered log $D$-varieties is proper, birational and crepant,
then 
\[
M_{\st}(\fX)=M_{\st}(\fX').
\]
\end{conjecture}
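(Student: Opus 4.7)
The plan is to reduce the statement to the motivic change of variables formula for $f$. Since $f:X\to X'$ is birational, the induced map $f_\infty:J_\infty X\to J_\infty X'$ is an isomorphism over a dense open, and the condition $f^{-1}(W')=W$ from crepancy ensures that $f_\infty^{-1}(J_\infty\fX')=J_\infty\fX$. Hence it suffices to match the two integrands after transport by $f_\infty$.

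Applying the Kontsevich--Denef--Loeser change of variables formula with integrand $\LL^{\bf_{\fX'}}$ will give
\[
\int_{J_\infty\fX'}\LL^{\bf_{\fX'}}\,d\mu_{J_\infty X'}=\int_{J_\infty\fX}\LL^{\bf_{\fX'}\circ f_\infty-\ord\Jac_f}\,d\mu_{J_\infty X},
\]
where $\Jac_f$ denotes the Jacobian ideal of $f$, i.e.\ the $0$-th Fitting ideal of $\Omega^{d}_{X/X'}$, equivalently the ideal cutting out the cokernel of the natural map $f^{*}(\bigwedge^{d}\Omega_{X'/D})/\mathrm{tors}\to(\bigwedge^{d}\Omega_{X/D})/\mathrm{tors}$. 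Comparing with the definition of $M_{\st}(\fX)$, the conjecture then reduces to the pointwise identity
\[
\bf_{\fX'}\circ f_\infty-\ord\Jac_f=\bf_\fX
\]
on a full-measure subset of $J_\infty\fX$.

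This identity should follow from crepancy via a sheaf-theoretic computation. Fix $r>0$ with $rK_\fX$ and $rK_{\fX'}$ both Cartier, and let $I_\fX^{r}$, $I_{\fX'}^{r}$ be the fractional ideals from Section \ref{sub:Motivic-integration} satisfying $(\bigwedge^{d}\Omega_{X/D})^{\otimes r}/\mathrm{tors}=I_\fX^{r}\cdot\cO_X(rK_\fX)$ and the analogous relation on $X'$. The crepancy isomorphism $\cO_X(rK_\fX)\cong f^{*}\cO_{X'}(rK_{\fX'})$ combined with the defining property of $\Jac_f^{r}$ will yield an identity of fractional ideals
\[
I_\fX^{r}=(f^{-1}I_{\fX'}^{r})\cdot\Jac_f^{r}.
\]
Applying $\tfrac1r\ord$ and using $\ord(f^{-1}I_{\fX'}^{r})=r\cdot(\bf_{\fX'}\circ f_\infty)$ on arcs will give the required equality of functions.

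The main obstacle is rigorously establishing the change of variables formula in the generality needed. For smooth $X,X'$ in characteristic zero it is classical, but here $X$ and $X'$ are only assumed to be normal $D$-varieties, possibly singular and possibly in mixed or positive characteristic. The usual strategy of reducing to a simple-normal-crossing situation via a log resolution is unavailable in positive characteristic, so one would have to invoke de Jong-type alterations, work formal-scheme theoretically in the sense of Nicaise--Sebag, or verify the formula directly through truncation maps and stratification along the dense smooth locus. A secondary technical point is controlling the torsion of $\bigwedge^{d}\Omega_{X/D}$ along $X_\sm\setminus X_\reg$ so that $I_\fX^{r}$ and $\Jac_f^{r}$ are well-defined and the loci where the corresponding order functions equal $\infty$ are measure-zero, as already flagged in the remark preceding Definition \ref{def: order function}.
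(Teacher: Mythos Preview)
The statement you are attempting to prove is labeled a \emph{Conjecture} in the paper and is not established in full generality there either. What the paper does prove is the special case where there exists a proper birational $D$-morphism $Y\to X$ with $Y\otimes_{\cO_D}K(D)$ smooth over $K(D)$ (in particular, whenever $K(D)$ has characteristic zero, by Hironaka). Your proposal and the paper's partial argument both rest on the same computation of integrands: from the crepancy isomorphism $\cO_X(rK_\fX)\cong f^*\cO_{X'}(rK_{\fX'})$ one extracts the identity $\bf_{\fX'}\circ f_\infty-\ord\,\mathrm{jac}_f=\bf_\fX$, exactly as you wrote. So on that level the approaches coincide.

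The genuine difference is in how the change of variables formula is invoked. You apply it directly to $f:X\to X'$ and then flag as ``the main obstacle'' that this formula is not known for arbitrary normal $D$-varieties in positive or mixed characteristic. The paper does not attempt this. Instead it introduces the auxiliary $Y$ with smooth generic fiber, equips $Y$ with the unique centered log structure $\fY$ making $\fY\to\fX$ (and hence $\fY\to\fX'$) crepant, and applies Sebag's version of the change of variables formula \cite[Th.~8.0.5]{MR2075915} separately to $Y\to X$ and $Y\to X'$, obtaining $M_{\st}(\fX)=M_{\st}(\fY)=M_{\st}(\fX')$. The point is that Sebag's formula is available when the source has smooth generic fiber, a hypothesis $Y$ satisfies by construction but which $X$ and $X'$ need not. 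This trick of factoring through a generically smooth model is precisely what your proposal lacks, and it is also why the paper can only assert the result conditionally: without resolution of singularities in positive characteristic, one cannot in general produce such a $Y$.

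In short, your sheaf-theoretic reduction is correct and matches the paper's, but your direct application of change of variables to $f$ is exactly where the argument breaks down, and the paper's workaround---passing to a third variety $Y$ rather than working with $f$ itself---is the missing idea. Even with that idea, the statement remains conjectural in full generality.
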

\begin{prop}
\label{prop:crepant equal stringy}Conjecture \ref{conj: crepant equal stringy}
holds if there exists a proper birational morphism $Y\to X$ of $D$-varieties
such that $Y\otimes_{\cO_{D}}K(D)$ is smooth over $K(D)$. In particular,
Conjecture \ref{conj: crepant equal stringy} holds if $K(D)$ has
characteristic zero.\end{prop}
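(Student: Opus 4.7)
The plan is to exhibit a common centered log variety dominating both $\fX$ and $\fX'$ properly and birationally, and then apply the motivic change of variables formula along each of the two comparison morphisms.

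First, I would use the hypothesis to produce a proper birational morphism $g: Y \to X$ with $Y \otimes_{\cO_{D}} K(D)$ smooth, and set $g' := f \circ g: Y \to X'$, which is again proper and birational. Both $g$ and $g'$ are birational between normal $D$-varieties and hence generically \'etale, so by the discussion in Subsection \ref{sub:Crepant-morphisms} there exists a unique centered log structure $\fY$ on $Y$ making $g: \fY \to \fX$ crepant. A direct check of the two defining conditions shows that the composition of crepant morphisms of centered log $D$-varieties is again crepant, so together with the crepancy of $f$ it follows that $g': \fY \to \fX'$ is crepant as well.

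Next, I would invoke the motivic change of variables formula for the proper birational crepant morphisms $g$ and $g'$. Crepancy of $g$ translates, modulo a measure-zero subset, into the identity $\bf_\fY = \bf_\fX \circ g_\infty - \ord\,\Jac_g$ on $J_\infty \fY$, and an analogous identity holds for $g'$. The change of variables formula then yields
\[
M_\st(\fY) = \int_{J_\infty \fY} \LL^{\bf_\fY}\, d\mu_{J_\infty \fY} = \int_{J_\infty \fX} \LL^{\bf_\fX}\, d\mu_{J_\infty \fX} = M_\st(\fX),
\]
and symmetrically $M_\st(\fY) = M_\st(\fX')$, so that $M_\st(\fX) = M_\st(\fX')$ as desired. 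For the second assertion, when $\characteristic K(D) = 0$ Hironaka's theorem furnishes a proper birational morphism $Y \to X$ with $Y \otimes_{\cO_{D}} K(D)$ smooth, verifying the hypothesis.

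The main obstacle will be justifying the change of variables formula under the weak assumption that $Y$ is only generically smooth over $D$. One must check that the motivic measure on $J_\infty \fY$ is well-defined and compatible with the Jacobian transformation rule, and that the arcs of $\fY$ meeting the non-smooth locus of $Y/D$ (which lies inside $Y_0$) contribute a measure-zero subset after twisting by $\LL^{-\ord\,\Jac_g}$. The generic smoothness hypothesis forces $\Jac_g$ to be a nonzero ideal sheaf, so $\ord\,\Jac_g$ is finite off a measure-zero set, and the standard cylinder-approximation arguments of Denef and Loeser should then extend without essential change.
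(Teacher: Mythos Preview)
Your proposal is correct and follows essentially the same approach as the paper: construct a common roof $Y$ over both $X$ and $X'$, equip it with the crepant centered log structure, and apply the change of variables formula to each leg to obtain $M_{\st}(\fX)=M_{\st}(\fY)=M_{\st}(\fX')$; the characteristic-zero case is handled via Hironaka. The only point worth sharpening is the technical input you flag at the end: the paper invokes Sebag's version of the change of variables formula \cite[Th.~8.0.5]{MR2075915} for formal schemes over a DVR, which is precisely what is needed here since $Y$ is only generically smooth over $D$, and the relevant Jacobian order is Sebag's $\ord\,\mathrm{jac}_{g}$ rather than the Fitting-ideal $\Jac_{g}$ (these agree when the source is smooth, but not in general).
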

\begin{proof}
Let $X_{\eta}$ be the generic fiber of $X\to D$. From the Hironaka
theorem, there exists a coherent ideal sheaf $I_{\eta}\subset\cO_{X_{\eta}}$
such that the blowup of $X_{\eta}$ along $I_{\eta}$ is smooth over
$K(D)$. Let $I\subset\cO_{X}$ be an coherent ideal sheaf such that
$I|_{X_{\eta}}=I_{\eta}$. The blowup of $X$ along $I$ has a smooth
generic fiber. Hence the second assertion of the lemma follows from
the first one.

To prove the first one, we can apply the version of the change of
variables formula proved by Sebag \cite[Th. 8.0.5]{MR2075915} (see
also \cite{MR2885336}). If $\fY$ is the centered log structure on
$Y$ such that the induced morphism $f:\fY\to\fX$ is crepant, then
the change of variables formula shows that
\[
\int_{J_{\infty}\fX}\LL^{\bf_{\fX}}\, d\mu_{J_{\infty}\fX}=\int_{J_{\infty}\fY}\LL^{\bf_{\fX}\circ f_{\infty}-\ord\,\mathrm{jac}_{f}}\, d\mu_{J_{\infty}\fY}.
\]
Here $\ord\,\mathrm{jac}_{f}$ is the function of Jacobian orders
as defined in \cite[page 29]{MR2075915}.$ $ For $r\in\ZZ_{>0}$
such that $rK_{\fX}$ and $rK_{\fY}$ are Cartier, we have
\begin{align*}
\left(f^{*}\left(\bigwedge^{d}\Omega_{X/D}\right)^{\otimes r}\right)/\tors & =f^{-1}I_{\fX}^{r}\cdot\cO_{\fY}(rK_{\fY})\text{ and}\\
\left(\bigwedge^{d}\Omega_{Y/D}\right)^{\otimes r}/\tors & =I_{\fY}^{r}\cdot\cO_{\fY}(rK_{\fY}).
\end{align*}
This shows that 
\[
\bf_{\fX}\circ f_{\infty}-\ord\,\mathrm{jac}_{f}=\bf_{\fY}.
\]
We obtain $M_{\st}(\fX)=M_{\st}(\fY)$, and similarly $M_{\st}(\fX')=M_{\st}(\fY)$.
We have proved the proposition. \end{proof}
\begin{prop}
\label{prop: ordinary explicit formula}Let $\fX=(X,\Delta,W)$ be
a centered log $D$-variety and write 
\[
\Delta=\sum_{h=1}^{l}a_{h}A_{h}+\sum_{i=1}^{m}b_{i}B_{i}+\sum_{j=1}^{n}c_{j}C_{j}\quad(a_{h},b_{i}\in\QQ,\, c_{j}\in\QQ\setminus\{0\})
\]
such that $A_{h}$ are the irreducible components of the closure of
$X_{0}\cap X_{\sm}$, $B_{i}$ are the irreducible components of $X_{0}\setminus X_{\sm}$
and $C_{j}$ are prime divisors not contained in $X_{0}$. Let

\begin{gather*}
A_{h}^{\circ}:=A_{h}\cap X_{\sm}=A_{h}\setminus\left(\overline{X_{0}\setminus A_{h}}\right)\text{ and}\\
C_{J}^{\circ}:=\bigcap_{j\in J}C_{j}\setminus\bigcup_{j\in\{1,..,n\}\setminus J}C_{j},
\end{gather*}
with $\overline{X_{0}\setminus A_{h}}$ the closure of $X_{0}\setminus A_{h}$.
We suppose that $X$ is regular and that $\bigcup_{j=1}^{n}C_{j}$
is simple normal crossing, that is, for any $J\subset\{1,\dots,n\},$
the scheme-theoretic intersection $\bigcap_{j\in J}C_{j}$ is smooth
over $D$. Then $\fX$ is stringily log terminal if and only if $c_{j}<1$
for every $j$ with $C_{j}\cap W\cap X_{\sm}\ne\emptyset$. Moreover,
if it is the case, 
\[
M_{\st}(\fX)=\sum_{h=1}^{l}\LL^{a_{h}}\sum_{J\subset\{1,\dots,n\}}[W\cap A_{h}^{\circ}\cap C_{J}^{\circ}]\prod_{j\in J}\frac{\LL-1}{\LL^{1-c_{j}}-1}.
\]
\end{prop}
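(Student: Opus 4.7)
The plan is to carry out the explicit motivic-integration computation for the simple normal crossing data, adapted to the centered log $D$-variety setup. The proof splits into a reduction to smooth arcs, a stratification by contact orders, a measure computation, and a summation of geometric series.

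First I would reduce to arcs $\gamma \in J_\infty \fX$ with $\gamma(0) \in X_\sm$. Because $X$ is regular and each $B_i$ is a full component of $X_0$ contained in $X \setminus X_\sm$, the multiplicity $m_i'$ of $B_i$ as a component of the scheme-theoretic special fiber is $\geq 2$. At a point $p$ in the open part $B_i^\circ := B_i \setminus \overline{X_0 \setminus B_i}$, the uniformizer $t \in \cO_D$ factors locally as $t = u \pi^{m_i'}$ with $u \in \cO_{X,p}^\times$ and $\pi$ cutting out $B_i$; a $D$-arc $\gamma$ with $\gamma(0) = p$ would force $(\gamma^* \pi)^{m_i'} = t(\gamma^* u)^{-1}$ in $\cO_D$, impossible since $\cO_D$ contains no proper root of a uniformizer. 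Arcs with $\gamma(0) \in \bigcup_i (B_i \setminus B_i^\circ) \cup \bigcup_h (A_h \setminus A_h^\circ)$ lie over a codimension-$\geq 2$ subset of $X$ and hence form a measurable set of measure zero. On $X_\sm$ we have $\omega_{X/D} = \bigwedge^d\Omega_{X/D}$, so $\bf_\fX = \ord\,\Delta$, and since $B_i \cap X_\sm = \emptyset$, only the $A_h$ and $C_j$ contribute.

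Next, I stratify by triples $(h, J, (n_j))$ with $h \in \{1,\ldots,l\}$, $J \subset \{1, \ldots, n\}$ and $(n_j) \in \ZZ_{>0}^J$:
\[
\Gamma_{h, J, (n_j)} := \bigl\{ \gamma : \gamma(0) \in W \cap A_h^\circ \cap C_J^\circ,\ \ord_{C_j}\gamma = n_j\ \forall\, j \in J \bigr\}.
\]
On this stratum $\ord_{A_h}\gamma = 1$ (since $A_h$ is cut out by $t$ on $A_h^\circ$ with multiplicity one and $\gamma^* t = t$) and $\ord_{C_j}\gamma = 0$ for $j \notin J$, so the integrand equals $\LL^{a_h + \sum_{j\in J} c_j n_j}$. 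The SNC hypothesis provides at each $p \in W \cap A_h^\circ \cap C_J^\circ$ a regular system of parameters $(t, y_1,\ldots,y_d)$ of $\cO_{X,p}$ with $C_j = V(y_j)$ for $j \in J$; recording an arc through $p$ by $(\gamma^* y_1,\ldots,\gamma^* y_d) \in \fm_D^{|J|} \times \cO_D^{d-|J|}$ and imposing $\ord \gamma^* y_j = n_j$ for $j \in J$, a standard finite-level count yields motivic class $[W \cap A_h^\circ \cap C_J^\circ] \cdot (\LL-1)^{|J|} \cdot \LL^{dN - \sum n_j}$ at jet level $N \gg \max n_j$, whence
\[
\mu(\Gamma_{h,J,(n_j)}) = [W \cap A_h^\circ \cap C_J^\circ] \cdot (\LL-1)^{|J|} \cdot \LL^{-\sum_{j\in J} n_j}.
\]

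Finally, summing $\LL^{a_h + \sum c_j n_j} \mu(\Gamma_{h,J,(n_j)})$ over $(n_j) \in \ZZ_{>0}^J$ factors as a product of geometric series $\sum_{n \geq 1} \LL^{n(c_j - 1)} = 1/(\LL^{1-c_j}-1)$, convergent iff $c_j < 1$; then summing over $h$ and $J$ produces the claimed formula, with the convergence criterion becoming $c_j < 1$ for every $j$ with $C_j \cap W \cap X_\sm \neq \emptyset$. The main obstacle is the first step: justifying that arcs through $\bigcup B_i$ contribute nothing requires the precise multiplicity analysis of $X_0$ on a regular total space; everything afterward is routine SNC motivic-integration bookkeeping.
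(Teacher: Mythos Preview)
Your approach matches the paper's: eliminate arcs centered in $X_0\setminus X_{\sm}$, decompose over the $A_h^\circ$, and then run the standard simple-normal-crossing computation (the paper simply cites this last step; you spell it out).

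There is one flaw in your first step. Your multiplicity argument for $B_i^\circ$ is correct, but the claim that arcs with $\gamma(0)\in\bigcup_i(B_i\setminus B_i^\circ)\cup\bigcup_h(A_h\setminus A_h^\circ)$ form a measure-zero set ``because they lie over a codimension-$\geq 2$ subset of $X$'' is not valid reasoning: the cylinder $\pi_0^{-1}(Z)$ over a closed $Z\subset X_0$ has measure $[Z]$, not zero, wherever $X$ is smooth over $D$ along $Z$. (You may be conflating this with the fact that $J_\infty Z\subset J_\infty X$ has measure zero for $Z$ of positive codimension.) The correct statement, which the paper asserts without proof, is stronger: the entire locus $X_0\setminus X_{\sm}$ carries no $D$-arcs at all. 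Indeed, at any $p\in X_0\setminus X_{\sm}$, regularity of $X$ together with non-smoothness over $D$ force the uniformizer $t\in\cO_D$ to lie in $\fm_p^2$; a $D$-arc $\gamma$ centered at $p$ would then give $t=\gamma^*(t)\in\gamma^*(\fm_p^2)\subset\fm_D^2$, a contradiction. This is exactly the mechanism behind your $B_i^\circ$ argument, applied at every point of $X_0\setminus X_{\sm}$ rather than only at generic points of the $B_i$. With this correction your proof is complete and agrees with the paper's.
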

\begin{proof}
We first note that the locus $X_{0}\setminus X_{\sm}$ does not have
any arc, hence not contribute to $M_{\st}(\fX)$. Since 
\[
X_{0}\cap X_{\sm}=\bigsqcup_{h=1}^{l}A_{h}^{\circ},
\]
we can decompose $M_{\st}(\fX)$ into the sum of components corresponding
to $A_{h}^{\circ}$, $h=1,..,l$. The divisor $a_{h}A_{h}$ contributes
to the component corresponding to $A_{h}^{\circ}$ by the multiplication
with $\LL^{a_{h}}$. From all these arguments, the proposition has
been reduced to the formula
\[
M_{\st}(\fX)=\sum_{J\subset\{1,\dots,n\}}[W\cap C_{J}^{\circ}]\prod_{j\in J}\frac{\LL-1}{\LL^{1-c_{j}}-1}
\]
in the case where $X$ is smooth and $\Delta=\sum_{j}c_{j}C_{j}$.
This is the standard explicit formula (see for instance \cite{MR1672108}). 
\end{proof}

\subsection{Group actions}
\begin{defn}
A \emph{centered log $G$-$D$-variety }is a centered log $D$-variety
$\fX=(X,\Delta,W)$ endowed with a faithful $G$-action on $X$ such
that $\Delta$ and $W$ are stable under the $G$-action. Given a
variety $X$ with a faithful $G$-action, we say that a centered log
structure $\fX$ on $X$ is $G$\emph{-equivariant }if it is a centered
log $G$-$D$-variety.
\end{defn}
For a centered log $G$-$D$-variety $\fX$, the arc space $J_{\infty}\fX$
has a natural $G$-action. We define a motivic measure on $(J_{\infty}\fX)/G$,
denoted by $\mu_{(J_{\infty}\fX)/G}$, in the same way as defining
the motivic measure on $J_{\infty}\fX$ except that in the definition
of stable subsets, say $A$, fibers of $\pi_{n+1}(A)\to\pi_{n}(A)$
are only assumed to be homeomorphic the quotient $\AA_{k}^{d}/H$
for some linear action of a finite group $H$ on $\AA_{k}^{d}$. 

The function $\bf_{\fX}$ on $J_{\infty}\fX$ is $G$-invariant and
gives a function on $(J_{\infty}\fX)/G$, which we again denote by
$\bf_{\fX}$. We define 
\[
M_{\st,G}(\fX):=\int_{(J_{\infty}\fX)/G}\LL^{\bf_{\fX}}\, d\mu_{(J_{\infty}\fX)/G}.
\]
The reader should not confuse $M_{\st,G}(\fX)$ with the orbifold
stringy motif $M_{\st}^{G}(\fX)$, defined later. 

Let us define a $G$-\emph{prime divisor} as a divisor of the form
$\sum_{i=1}^{l}D_{i}$, where $D_{i}$ are prime divisors permuted
transversally by the $G$-actions, 
\begin{prop}
\label{prop:equivariant explicit formula}Let $\fX=(X,\Delta,W)$
be a centered log $G$-$D$-variety and write 
\[
\Delta=\sum_{h=1}^{l}a_{h}A_{h}+\sum_{i=1}^{m}b_{i}B_{i}+\sum_{j=1}^{n}c_{j}C_{j}\quad(a_{h},b_{i}\in\QQ,\, c_{j}\in\QQ\setminus\{0\})
\]
such that $A_{h}$ are the distinct $G$-prime divisors such that
$\bigcup A_{h}$ is equal to the closure of $X_{0}\cap X_{\sm}$,
$B_{i}$ are the distinct $G$-prime divisors with $\bigcup_{i}B_{i}=X_{0}\setminus X_{\sm}$
and $C_{j}$ are $G$-prime divisors not contained in $X_{0}$. We
suppose 
\begin{itemize}
\item $X$ is regular, 
\item \label{enu:snc}for any $J\subset\{1,\dots,n\},$ the scheme-theoretic
intersection $\bigcap_{j\in J}C_{j}$ is smooth over $D$, and
\item for every $j$ with $C_{j}\cap W\cap X_{\sm}\ne\emptyset$, $c_{j}<1$. 
\end{itemize}
With the same notation as in Proposition \ref{prop: ordinary explicit formula},
we have 
\[
M_{\st,G}(\fX)=\sum_{h=1}^{l}\LL^{a_{h}}\sum_{J\subset\{1,\dots,n\}}\left[\frac{W\cap A_{h}^{\circ}\cap C_{J}^{\circ}}{G}\right]\prod_{j\in J}\frac{\LL-1}{\LL^{1-c_{j}}-1}.
\]

\end{prop}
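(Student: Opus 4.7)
The plan is to run the proof of Proposition \ref{prop: ordinary explicit formula} in a $G$-equivariant manner, tracking the $G$-action on every stratum of $J_{\infty}\fX$ and checking that the fiber structure of $\pi_{n+1}\to\pi_{n}$ on each stratum falls within the class permitted by the definition of $\mu_{(J_{\infty}\fX)/G}$.

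First I would carry out the same two reductions used in the non-equivariant case. The locus $X_{0}\setminus X_{\sm}$ carries no arcs, so the $G$-prime divisors $B_{i}$ are irrelevant. The decomposition $X_{0}\cap X_{\sm}=\bigsqcup_{h}A_{h}^{\circ}$ is $G$-stable because each $A_{h}$ is a $G$-prime divisor, so the arc space breaks up $G$-equivariantly into the pieces landing in the various $A_{h}^{\circ}$, producing the factor $\LL^{a_{h}}$ on each piece exactly as before. This reduces me to the situation where $X$ is smooth and $\Delta=\sum_{j=1}^{n}c_{j}C_{j}$ with the $C_{j}$ being $G$-prime SNC divisors, where I must prove
\[
M_{\st,G}(\fX)=\sum_{J\subset\{1,\dots,n\}}\left[\frac{W\cap C_{J}^{\circ}}{G}\right]\prod_{j\in J}\frac{\LL-1}{\LL^{1-c_{j}}-1}.
\]

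Next I would stratify $J_{\infty}\fX$ by the multi-order $(n_{j})_{j}=(\ord\,C_{j})_{j}\in\ZZ_{\ge0}^{n}$. Each $C_{j}$ is $G$-stable as a divisor (being a $G$-orbit sum of prime components), so $\ord\,C_{j}$ is $G$-invariant and each stratum $S_{(n_{j})}$ is $G$-invariant. Setting $J=\{j\mid n_{j}>0\}$, SNC local coordinates around points of $\bigcap_{j\in J}C_{j}$ show that for sufficiently large truncation level the image $\pi_{N}(S_{(n_{j})})$ is a stable constructible cylinder that fibers over $W\cap C_{J}^{\circ}$ with affine-space fibers of the usual dimension, and this fibration is $G$-equivariant. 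Passing to $(J_{\infty}\fX)/G$, the base contributes $\left[\frac{W\cap C_{J}^{\circ}}{G}\right]$, while each fiber becomes a quotient $\AA_{k}^{m}/H$ with $H$ the stabilizer of a chosen jet acting linearly on the $G$-equivariant local coordinates; the compatibility axiom in the definition of $\mu_{(J_{\infty}\fX)/G}$ then guarantees that these fibers contribute the same power of $\LL$ as in the ungraded case. Since $\bf_{\fX}$ restricted to $S_{(n_{j})}$ equals $\sum_{j}c_{j}n_{j}$, summing the geometric series over $n_{j}\ge1$ for $j\in J$ reproduces the factor $\prod_{j\in J}\frac{\LL-1}{\LL^{1-c_{j}}-1}$ exactly as in Proposition \ref{prop: ordinary explicit formula}.

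The main technical obstacle is the assertion that the $G$-action on each fiber of $\pi_{n+1}|_{S_{(n_{j})}}\to\pi_{n}|_{S_{(n_{j})}}$ is homeomorphic to a linear finite-group quotient of some $\AA_{k}^{m}$. This follows from the SNC hypothesis together with the smoothness of every scheme-theoretic intersection $\bigcap_{j\in J}C_{j}$: these assumptions let one choose $G$-equivariant local coordinates around any such intersection on which the stabilizer of a jet acts linearly, after which the geometric summation is purely formal and identical to the non-equivariant computation.
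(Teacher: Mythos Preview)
The paper states this proposition without proof; your argument is precisely the natural equivariant adaptation of the proof given for Proposition~\ref{prop: ordinary explicit formula}, and this is evidently what the author has in mind.

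One point in your last paragraph deserves care. You justify the required fiber structure by asserting that the SNC hypothesis lets one ``choose $G$-equivariant local coordinates around any such intersection on which the stabilizer of a jet acts linearly.'' As the introduction of this very paper emphasizes, wild finite group actions on smooth varieties are in general \emph{not} locally linearizable, so such coordinates need not exist. What the definition of $\mu_{(J_{\infty}\fX)/G}$ and the fourth axiom on $\cR$ actually require is only that the stabilizer $H$ of an $n$-jet acts on the fiber of $\pi_{n+1}\to\pi_{n}$ (an $\AA_{k}^{d}$-torsor) in such a way that the quotient is \emph{homeomorphic} to some linear quotient $\AA_{k}^{d}/H'$. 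In the tame case the torsor has an $H$-fixed point and the action linearizes after translation; in the wild case you should argue directly with the always-linear $H$-action on the tangent space at the base point and the resulting affine action on the torsor, rather than invoking equivariant local coordinates that may fail to exist. The conclusion you want still holds, but the justification should be phrased this way.
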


\section{$G$-arcs}

In the last subsection, we considered motivic integration over varieties
endowed with finite group actions. However we considered only ordinary
(untwisted) arcs, which are not general enough to apply to the McKay
correspondence. Suitably generalized arcs were introduced by Denef
and Loeser \cite{MR1905024} in characteristic zero. The author \cite{Yasuda:2013fk}
further generalized them to arbitrary characteristics. We may use
generalized arcs of orbifolds or Deligne-Mumford stacks as in \cite{MR2069013,MR2027195,MR2271984,Yasuda:2013fk}
so that we can treat general orbifolds, having group actions only
locally. We do not pursue generalization in this direction, however.

From now on, we fix a finite group $G$. 
\begin{defn}
By a $G$-\emph{cover }of $D$, we mean a $D$-scheme $E$ endowed
with a left $G$-action such that $E\otimes_{\cO_{D}}K(D)$ is an
\'{e}tale $G$-torsor over $\Spec K(D)$ and $E$ is the normalization
of $D$ in $\cO_{E\otimes_{\cO_{D}}K(D)}$. We denote by $\GCov(D)$
the set of $G$-covers of $D$ up to isomorphism. \end{defn}
\begin{rem}
In the tame case, there is a one-to-one correspondence between the
points of $\GCov(D)$ and the conjugacy classes in $G$. In the wild
case, however, $\GCov(D)$ is expected to be an infinite dimensional
space having a countable stratification with finite-dimensional strata.
\end{rem}
We now fix the following notation: $E$ is a $G$-cover of $D$, $F$
is a connected component of $E$ with a stabilizer $H$ so that $F$
is an $H$-cover of $D$. 
\[
\xymatrix{E\ar[dr]_{G\text{-cover}} &  & F\ar@{_{(}->}_{\text{conn. comp.}}[ll]\ar[dl]^{H\text{-cover}}\\
 & D
}
\]

\begin{lem}
Let $\Aut(E)$ be the automorphism group of $E$ as a $G$-cover of
$D$, that is, it consists of $G$-equivariant $D$-automorphisms
of $E$. We have a natural isomorphism 
\[
\Aut(E)\cong C_{G}(H)^{\op},
\]
where the right side is the opposite group of the centralizer of $H$
in $G$. \end{lem}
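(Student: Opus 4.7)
The plan is to write down an explicit bijection $C_G(H)^{\op} \to \Aut(E)$ and verify it is a group homomorphism.

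First I would record the decomposition into connected components. Because $E \otimes_{\cO_D} K(D)$ is an \'{e}tale $G$-torsor over $\Spec K(D)$ and $F$ has stabilizer $H$, the $G$-action permutes the connected components transitively with stabilizer $H$, so $E = \bigsqcup_{gH \in G/H} gF$.

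Next I would construct the homomorphism. For $c \in C_G(H)$, define $\phi_c : E \to E$ by $\phi_c(g \cdot x) := gc \cdot x$ for $g \in G$ and $x \in F$. Well-definedness is where the centralizer condition enters: if $g \cdot x = g' \cdot x'$ with $x, x' \in F$, then $h := g^{-1}g' \in H$ and $x = h \cdot x'$, so $gc \cdot x = gch \cdot x' = ghc \cdot x' = g'c \cdot x'$ using $ch = hc$. $G$-equivariance is built in, and a direct computation gives $\phi_c \circ \phi_{c'} = \phi_{c'c}$, so $c \mapsto \phi_c$ is a homomorphism $C_G(H)^{\op} \to \Aut(E)$. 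Injectivity is straightforward: $\phi_c(F) = cF$, so if $\phi_c = \phi_{c'}$ then $c^{-1}c' \in H$ acts trivially on $F$, forcing $c = c'$.

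For surjectivity, given any $\phi \in \Aut(E)$, the image $\phi(F)$ is a connected component $g'F$, and since $H$ stabilizes both $F$ and $g'F$ we get $g' \in N_G(H)$. Write $\phi|_F$ as the composition of a $D$-automorphism $\psi : F \to F$ with the action of $g'$. Passing to generic fibers, $F \otimes_{\cO_D} K(D) = \Spec L$ with $L/K(D)$ Galois of group $H$, and every $K(D)$-algebra automorphism of $L$ is some $\sigma \in H$. The $H$-equivariance of $\phi$ then translates to $\sigma h \sigma^{-1} = g'^{-1}hg'$ for all $h \in H$, equivalently $c := g'\sigma \in C_G(H)$, and one verifies $\phi = \phi_c$. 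The main subtlety here is correctly repackaging the data $(g', \sigma)$ into a single element $c$; this uses normality of $F$ and that its generic fiber is Galois with group $H$, so that every $D$-automorphism of $F$ is recovered from a unique element of $H$. The appearance of the opposite group reflects the standard fact that automorphisms of a torsor are realized by right translations, which reverse multiplication order.
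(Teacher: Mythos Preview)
Your proof is correct and complete. The approach differs from the paper's, however. The paper argues by base change: it first observes that for the trivial $G$-cover $D \times G$ the automorphism group is $G^{\op}$ via the right regular action, then for general $E$ it forms the normalization $E_F$ of $E \times_D F$, which is a trivial $G$-cover of $F$, obtains an injection $\Aut(E) \hookrightarrow \Aut(E_F) = G^{\op}$, and identifies the image as those automorphisms compatible with the $\Gal(F/D) = H$-action on $E_F$. This is a descent argument, short but leaving implicit the verification that ``$H$-compatible'' cuts out exactly $C_G(H)^{\op}$ inside $G^{\op}$. Your argument instead writes down the map $c \mapsto \phi_c$ explicitly on the decomposition $E = \bigsqcup_{G/H} gF$ and checks well-definedness, the group law, injectivity, and surjectivity by hand. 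What the paper's route buys is brevity and a clear conceptual reason why the opposite group appears (automorphisms of a torsor are right translations); what yours buys is that every step is visible, in particular the surjectivity computation recovering $c = g'\sigma$ from an arbitrary $\phi$, which the paper does not spell out.
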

\begin{proof}
If $E$ is the trivial $G$-cover $D\times G$ of $D$, then its automorphisms
are nothing but the right $G$-action on $G$. Therefore $\Aut(E)=G^{\op}$. 

For the general case, let $E_{F}$ be the normalization of the fiber
product $E\times_{D}F$. This is a trivial $G$-cover of $F$ and
we have a natural injection 
\[
\Aut(E)\to\Aut(E_{F})=G^{\op}.
\]
Its image is the automorphisms of $E_{F}$ compatible with the action
of $\Gal(F/D)=H$. This shows the lemma. 
\end{proof}
Let $V$ be a $D$-variety endowed with a faithful left $G$-action. 
\begin{defn}
We define an \emph{$E$-twisted $G$-arc} of $V$ as a $G$-equivariant
$D$-morphism $E\to V$ and a $G$\emph{-arc }of $V$ as an $E$-twisted
$G$-arc for some $E$. Two $G$-arcs $E\to V$ and $E'\to V$ are
said to be \emph{isomorphic} if there exists a $G$-equivariant $D$-isomorphism
$E\to E'$ compatible with the morphisms to $V$. We denote by $J_{\infty}^{G,E}V$
the set of isomorphism classes of $E$-twisted $G$-arcs of $V$ and
by $J_{\infty}^{G}V$ the set of isomorphism classes of $G$-arcs
of $V$.
\end{defn}
Obviously, 
\[
J_{\infty}^{G}V=\bigsqcup_{E\in\GCov(D)}J_{\infty}^{G,E}V.
\]
Let $\Hom_{D}^{G}(E,V)$ be the space of $G$-equivariant $D$-morphisms
$E\to V$. We define a left action of $C_{G}(H)=\Aut(E)^{\op}$ on
$\Hom_{D}^{G}(E,V)$ as follows: for $a\in C_{G}(H)=\Aut(E)^{\op}$
and $f\in\Hom_{D}^{G}(E,V)$, 
\begin{equation}
a\cdot(E\xleftarrow{f}V):=(V\xleftarrow{f}E\xleftarrow{a}E)=(V\xleftarrow{a}V\xleftarrow{f}E).\label{eq:action on Xi}
\end{equation}
By definition, we have 
\begin{equation}
J_{\infty}^{G,E}V=\Hom_{D}^{G}(E,V)/C_{G}(H).\label{eq:J =00003D Hom / ?}
\end{equation}

For $n\in\ZZ_{\ge0}$, we put $F_{n}:=F/\fm_{F}^{n\cdot h+1}$ with
$h:=\sharp H$ and define $E_{n}:=\bigcup_{g\in G}g(F_{n})$. In particular,
$F_{0}\cong\Spec k$ and $E_{0}$ consists of the closed points of
$E$ with the reduced scheme structure. 
\begin{defn}
We define an $E$-\emph{twisted $G$-n-jet }of $V$ as a $G$-equivariant
$D$-morphism $E_{n}\to V$ and put
\begin{gather*}
J_{n}^{G,E}V:=\Hom_{D}^{G}(E_{n},V)/C_{G}(H)\text{ and}\\
J_{n}^{G}V=\bigsqcup_{E\in\GCov(D)}J_{n}^{G,E}V.
\end{gather*}
Here the $C_{G}(H)$-action on $\Hom_{D}^{G}(E_{n},V)$ is similarly
defined as (\ref{eq:action on Xi}).
\end{defn}
Note that if $E\not\cong E'$, then $E$-twisted and $E'$-twisted
$G$-$n$-jets never give the same point of $J_{n}^{G}V$. For each
$n\in\ZZ_{\ge0}$ and $E\in\GCov(D)$, we have natural maps
\[
J_{\infty}^{G,E}V\to J_{n}^{G,E}V\text{ and }J_{\infty}^{G}V\to J_{n}^{G}V,
\]
both of which we will denote by $\pi_{n}$. We have obtained the following
commutative diagram:

\[
\xymatrix{J_{\infty}^{G,E}V\ar[r]^{\pi_{n+1}}\ar[d] & J_{n+1}^{G,E}V\ar[r]\ar[d] & J_{n}^{G,E}V\ar[r]\ar[d] & \{E\}\ar[d]\\
J_{\infty}^{G}V\ar[r]_{\pi_{n+1}} & J_{n+1}^{G}V\ar[r] & J_{n}^{G}V\ar[r] & \GCov(D)
}
\]

\begin{rem}
\label{rem: finite-dimensional strata}In \cite{Yasuda:2013fk}, the
author conjectured that the sets $\GCov(D)$, $J_{n}^{G,E}V$, $J_{n}^{G}V$
$(0\le n<\infty)$ are realized as $k$-schemes admitting stratifications
with at most countably many finite-dimensional strata, which will
be necessary below to define the motivic measure. 
\end{rem}
Let $X$ be the quotient scheme $V/G$, writing the quotient morphism
as 
\[
p:V\to X.
\]
Given a $G$-arc $E\to V$, we get an arc $D\to X$ by taking the
$G$-quotients of the source and the target. This gives a natural
map
\[
p_{\infty}:J_{\infty}^{G}V\to J_{\infty}X.
\]
Let $T\subset V$ be the ramification locus of $\pi$ say with the
reduced scheme structure and $\bar{T}\subset X$ its image. The map
$p_{\infty}$ restricts to the bijection
\[
J_{\infty}^{G}V\setminus J_{\infty}^{G}T\to J_{\infty}X\setminus J_{\infty}\bar{T}.
\]

For $n<\infty$, we have a natural map 
\[
p_{n}:\pi_{n}(J_{\infty}^{G}V)\to J_{n}X,
\]
where $\pi_{n}$ denotes the natural map $J_{\infty}^{G}V\to J_{n}^{G}V$. 

For a centered log $G$-$D$-variety $\fV$ and $n\in\ZZ_{\ge0}\cup\{\infty\}$,
we define $J_{n}^{G}\fV$ and $J_{n}^{G,E}\fV$ as the subsets of
$J_{n}^{G}V$ and $J_{n}^{G,E}V$ consisting of the morphisms $E_{n}\to V$
sending the closed points of $E_{n}$ into the center of $\fV$.

\section{The untwisting technique revisited\label{sec:The-untwisting-technique}}

In this section, we revisit the technique of \emph{untwisting, }which
was first used by Denef and Loeser \cite{MR1905024} in characteristic
zero and generalized to arbitrary characteristics by the author \cite{Yasuda:2013fk}.
Our constructions below are slightly different and refined from the
ones in \cite{Yasuda:2013fk}.

Let us now turn to the case where $V$ is an affine space over $D$
and the given $G$-action is linear. We keep to fix a $G$-cover $E$
of $D$ and a connected component $F$ of $E$ with stabilizer $H$. 

For a free $\cO_{D}$-module $M$ of rank $d$, let $\cO_{V}:=S_{\cO_{D}}^{\bullet}M$
be its symmetric algebra and put 
\[
V=\Spec\cO_{V}=\AA_{D}^{d}.
\]
We suppose that the module $M$ and hence the $\cO_{D}$-algebra $\cO_{V}$
have faithful \emph{right} $G$-actions. Then $V$ has the induced
\emph{left} $G$-action. The set $\Hom_{D}^{G}(E,V)$ can be identified
with the $\cO_{D}$-module 
\begin{gather*}
\Xi_{F}:=\Hom_{\cO_{D}}^{H}(M,\cO_{F})=\Hom_{\cO_{D}}^{G}(M,\cO_{E}).
\end{gather*}
We call $\Xi_{F}$ the \emph{tuning module}. 
\begin{rem}
If we fix a basis of $M$, then the module $\Hom_{\cO_{D}}(M,\cO_{E})$
is identified with $\cO_{E}^{\oplus d}$. This module $\cO_{E}^{\oplus d}$
has two $G$-actions: one is the diagonal $G$-action induced from
the given $G$-action on $\cO_{E}$ and the other is the one induced
from the $G$-action on $M$. For an element of $\cO_{E}^{\oplus d}$
corresponding to a $G$-equivariant map $M\to\cO_{E}$, two actions
must coincide. We thus can identify $\Xi_{F}$ with the locus in $\cO_{E}^{\oplus d}$
where the two actions coincide. This was how the module $\Xi_{F}$
was presented in previous papers \cite{Yasuda:2013fk,Wood-Yasuda-I}.\end{rem}
\begin{lem}[\cite{Yasuda:2013fk,Wood-Yasuda-I}]
The module $\Xi_{F}$ is a free $\cO_{D}$-module of rank $d$. Moreover
it is a saturated $\cO_{D}$-submodule of $\Hom_{\cO_{D}}(M,\cO_{F})$
and of $\Hom_{\cO_{D}}(M,\cO_{E})$: for $a\in\cO_{D}$ and $f\in\Hom_{\cO_{D}}(M,\cO_{E})$,
if $af\in\Xi_{F}$, then $f\in\Xi_{F}$. 
\end{lem}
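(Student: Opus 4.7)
The plan is to deduce the lemma from a generic dimension calculation over $K(D)$, followed by a saturation argument, after which freeness and the precise rank drop out automatically.

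I would begin with the generic rank of $\Xi_{F}$. Set $M_{\eta}:=M\otimes_{\cO_{D}}K(D)$ and $L:=\cO_{E}\otimes_{\cO_{D}}K(D)$; then $L$ is an \'{e}tale $G$-torsor over $K(D)$, and it decomposes as $L=\prod_{gH\in G/H}K(gF)$, where $K(F)/K(D)$ is a finite Galois extension with group $H$. Projection onto the $F$-component gives a natural isomorphism
$$
\Hom^{G}_{K(D)}(M_{\eta},L)\;\xrightarrow{\;\sim\;}\;\Hom^{H}_{K(D)}(M_{\eta},K(F)),
$$
whose inverse extends an $H$-equivariant map across the cosets of $H$ in $G$ (this is Frobenius reciprocity in concrete form). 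By the normal basis theorem, valid for any finite Galois extension of fields including wild ones, $K(F)\cong K(D)[H]$ as $K(D)[H]$-modules. Writing a candidate $H$-equivariant map $M_{\eta}\to K(D)[H]$ as $n\mapsto\sum_{h\in H}a_{h}(n)\,h$ with $a_{h}\in\Hom_{K(D)}(M_{\eta},K(D))$, the equivariance relation forces all of the $a_{h}$ to be determined by a single one of them; the surviving datum is then an arbitrary $K(D)$-linear form on $M_{\eta}$, so the generic dimension is exactly $d$.

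For saturation, I would realize $\Xi_{F}$ as the common kernel, as $g$ ranges over $G$, of the $\cO_{D}$-linear endomorphisms
$$
\phi_{g}:\Hom_{\cO_{D}}(M,\cO_{E})\to\Hom_{\cO_{D}}(M,\cO_{E}),\qquad f\mapsto\bigl(m\mapsto f(mg)-f(m)\cdot g\bigr).
$$
Since $E$ is the normalization of $D$ in a finite \'{e}tale $K(D)$-algebra, $\cO_{E}$ is a product of complete DVR extensions of $\cO_{D}$, hence free (in particular torsion-free) over $\cO_{D}$, so the target of each $\phi_{g}$ is torsion-free. Therefore each $\ker\phi_{g}$ is saturated in $\Hom_{\cO_{D}}(M,\cO_{E})$, and the intersection $\Xi_{F}=\bigcap_{g}\ker\phi_{g}$ is saturated as well. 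Saturation inside $\Hom_{\cO_{D}}(M,\cO_{F})$ follows by exactly the same argument applied to the endomorphisms indexed by $h\in H$, using the identification $\Xi_{F}=\Hom^{H}_{\cO_{D}}(M,\cO_{F})$ recorded in the statement.

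Finally, because $\cO_{D}$ is a complete discrete valuation ring, in particular a PID, every submodule of the finitely generated free module $\Hom_{\cO_{D}}(M,\cO_{E})$ is itself free, so $\Xi_{F}$ is free; its rank is forced to equal $\dim_{K(D)}\Xi_{F}\otimes_{\cO_{D}}K(D)=d$ by the first step. The most delicate point is this first step: in the wild setting $K(D)[H]$ is not semisimple, so one cannot decompose $M_{\eta}$ into irreducible $H$-subrepresentations and count dimensions piece by piece; one must instead invoke the normal basis theorem together with the explicit coefficient manipulation in order to pin down the $K(D)$-dimension. Everything else is formal module theory over a DVR.
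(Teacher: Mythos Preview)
Your argument is correct. The paper itself does not supply a proof of this lemma; it merely cites \cite{Yasuda:2013fk,Wood-Yasuda-I}, so there is no in-text proof to compare against. Your proof is a clean, self-contained replacement for that citation: the saturation step (realizing $\Xi_{F}$ as an intersection of kernels of $\cO_{D}$-linear maps into a torsion-free module) is exactly the natural argument, and freeness then follows immediately over a PID. Your use of the normal basis theorem to compute the generic rank is the right device in the wild setting, where semisimplicity of $K(D)[H]$ fails; the coefficient computation you sketch is correct (all $a_{h}$ are determined by $a_{e}$, and conversely any $a_{e}\in\Hom_{K(D)}(M_{\eta},K(D))$ extends to an equivariant map), and flatness of $K(D)$ over $\cO_{D}$ justifies identifying $\Xi_{F}\otimes_{\cO_{D}}K(D)$ with $\Hom_{K(D)}^{H}(M_{\eta},K(F))$. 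One cosmetic point: the statement's phrasing ``for $a\in\cO_{D}$'' tacitly means $a\neq 0$, and your torsion-freeness argument of course uses this.
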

From (\ref{eq:J =00003D Hom / ?}), 
\[
J_{\infty}^{G,E}V=\Xi_{F}/C_{G}(H)
\]
Note that the $C_{G}(H)$-action on $\Xi_{F}$ is $\cO_{D}$-linear. 
\begin{lem}
The maps 
\[
\pi_{n+1}(J_{\infty}^{G}V)\to\pi_{n}(J_{\infty}^{G}V)
\]
have fibers homeomorphic to the quotient of $\AA_{k}^{d}$ by a linear
action of a finite group. \end{lem}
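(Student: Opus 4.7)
The plan is to unwind both sides of the map using the tuning module $\Xi_{F}$ and then compute the fibers after passing to the $C_{G}(H)$-quotient. Fix a $G$-cover $E$ of $D$ with connected component $F$ of stabilizer $H$, and set $h:=|H|$. Using $\Hom_{D}^{G}(E_{n},V)=\Hom_{\cO_{D}}^{H}(M,\cO_{F_{n}})$ and the fact that the image of $\Xi_{F}=\Hom_{\cO_{D}}^{H}(M,\cO_{F})$ under the natural map to $\Hom_{\cO_{D}}(M,\cO_{F_{n}})$ has kernel $K_{n}:=\Xi_{F}\cap\fm_{F}^{nh+1}\Hom_{\cO_{D}}(M,\cO_{F})$, one has
\[
\pi_{n}(J_{\infty}^{G,E}V)=(\Xi_{F}/K_{n})/C_{G}(H),
\]
and the map to analyze is the natural projection $(\Xi_{F}/K_{n+1})/C_{G}(H)\to(\Xi_{F}/K_{n})/C_{G}(H)$. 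This is well-defined since $C_{G}(H)$ acts $\cO_{D}$-linearly on $\Xi_{F}$, hence preserves each $K_{n}$.

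I would first show $K_{n}/K_{n+1}\cong\AA_{k}^{d}$ as $k$-vector spaces. Put $\Xi_{F}^{\ge j}:=\Xi_{F}\cap\fm_{F}^{j}\Hom_{\cO_{D}}(M,\cO_{F})$. Because $k$ is algebraically closed, $F/D$ is totally ramified of degree $h$, so a uniformizer $\pi_{D}\in\cO_{D}$ satisfies $\pi_{D}\cO_{F}=\fm_{F}^{h}$. Combined with the saturation of $\Xi_{F}$ in $\Hom_{\cO_{D}}(M,\cO_{F})$ and the $H$-invariance of $\pi_{D}$, this yields an $\cO_{D}$-module isomorphism $\pi_{D}\colon\Xi_{F}^{\ge j}\to\Xi_{F}^{\ge j+h}$. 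The graded pieces $\Xi_{F}^{\ge j}/\Xi_{F}^{\ge j+1}$ are therefore periodic in $j$ of period $h$, and summing over one full period gives
\[
\dim_{k}K_{n}/K_{n+1}=\sum_{j=nh+1}^{(n+1)h}\dim_{k}\Xi_{F}^{\ge j}/\Xi_{F}^{\ge j+1}=\dim_{k}\Xi_{F}/\fm_{D}\Xi_{F}=d.
\]

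For the fiber after quotienting, pick $\bar{\xi}_{n}\in\Xi_{F}/K_{n}$ representing a class $[\bar{\xi}_{n}]$ in the target, and write $\pi\colon\Xi_{F}/K_{n+1}\to\Xi_{F}/K_{n}$. A standard orbit calculation gives that the fiber over $[\bar{\xi}_{n}]$ equals $\pi^{-1}(\bar{\xi}_{n})/\mathrm{Stab}_{C_{G}(H)}(\bar{\xi}_{n})$. Now $\pi^{-1}(\bar{\xi}_{n})$ is a torsor under $K_{n}/K_{n+1}\cong\AA_{k}^{d}$, and $\mathrm{Stab}(\bar{\xi}_{n})$ acts on it by affine transformations whose linear part is the induced $k$-linear action on $K_{n}/K_{n+1}$. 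Upon choosing a $\mathrm{Stab}(\bar{\xi}_{n})$-equivariant base point in $\pi^{-1}(\bar{\xi}_{n})$, the torsor is equivariantly identified with $K_{n}/K_{n+1}$, the action becomes linear, and the fiber is exhibited as the quotient of $\AA_{k}^{d}$ by a linear finite group action.

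The main technical obstacle is producing such a $\mathrm{Stab}(\bar{\xi}_{n})$-fixed lift. Its obstruction lies in $H^{1}(\mathrm{Stab}(\bar{\xi}_{n}),K_{n}/K_{n+1})$, which vanishes by averaging in the tame case but not in general. In the wild case the cleanest resolution is to replace $\mathrm{Stab}(\bar{\xi}_{n})$ by the semidirect product of its translation subgroup inside $K_{n}/K_{n+1}$ with its linear part, which acts linearly on $\AA_{k}^{d}$ and has the same coarse quotient as the original affine action; this realizes the fiber as $\AA_{k}^{d}$ modulo a linear action of a possibly larger finite group, as required by the statement.
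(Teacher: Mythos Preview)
Your computation that $K_n/K_{n+1}\cong k^d$ via the filtration $\Xi_F^{\ge j}$ and $\pi_D$-periodicity is correct, and in fact more careful than the paper's one-line argument: the paper asserts $\pi_n(\Xi_F)\cong(\cO_D/\fm_D^{n+1})^{\oplus d}$, which is not literally true (take $H$ permuting two basis vectors of $M$; then $\pi_0(\Xi_F)$ is the diagonal in $k^2$, not $k^2$ itself), though the conclusion on fibers survives. A slight shortcut over your graded sum: once $\pi_D\colon\Xi_F^{\ge j}\xrightarrow{\sim}\Xi_F^{\ge j+h}$ is established, you get $K_n=\pi_D^nK_0$, and since $K_0$ has finite colength in the free module $\Xi_F$ it is itself free of rank $d$, whence $K_n/K_{n+1}\cong K_0/\pi_DK_0\cong k^d$ directly.

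Where you go beyond the paper is the $C_G(H)$-quotient step, which the paper simply does not address. Your identification of the fiber as $\pi^{-1}(\bar\xi_n)/\mathrm{Stab}(\bar\xi_n)$, with the stabilizer acting a priori only affinely and the obstruction to a fixed lift lying in $H^1(\mathrm{Stab}(\bar\xi_n),K_n/K_{n+1})$, is correct. But your wild-case resolution is not convincing as written: in the semidirect product $T\rtimes\bar S$ of the translation subgroup with the linear part, $T$ still acts by translations, not linearly; and if instead one first quotients $\AA_k^d$ by $T$ (which does yield another $\AA_k^d$) and then by $\bar S$, there is no reason the induced $\bar S$-action on the Artin--Schreier coordinates is linear. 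So this step remains a genuine gap---one the paper itself leaves open. For the downstream definition of the measure one only needs each fiber to have class $\LL^d$ in $\cR$, so the cleanest repair is to strengthen the fourth axiom on $\cR$ to allow affine (not just linear) finite-group quotients of $\AA_k^d$; as the lemma is literally stated, however, neither argument fully closes it in the wild case.
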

\begin{proof}
If we denote the map $\Xi_{F}\to\Hom_{D}^{H}(F_{n},V)$ again by $\pi_{n}$,
the image $\pi_{n}(\Xi_{F})$ is isomorphic to $(\cO_{D}/\fm_{D}^{n+1})^{\oplus d}$.
This shows that the fibers of 
\[
\pi_{n+1}(\Xi_{F})\to\pi_{n}(\Xi_{F})
\]
are isomorphic to $\AA_{k}^{d}$, proving the lemma. \end{proof}
\begin{defn}
We define a motivic measure $\mu_{J_{\infty}^{G}V}$ on $J_{\infty}^{G}V$
in the same way as the ones on $J_{\infty}V$ and $(J_{\infty}V)/G$.
If $\fV$ is a $G$-equivariant centered log structure on $V$, we
define the measure $\mu_{J_{\infty}^{G}\fV}$ on $J_{\infty}^{G}\fV$
as the restriction of $\mu_{J_{\infty}^{G}V}$. \end{defn}
\begin{rem}
For the definition above making sense, we need the conjecture that
moduli spaces $\GCov(D)$ and $J_{n}^{G}V$ exist and have some finiteness
(see Remark \ref{rem: finite-dimensional strata}).\end{rem}
\begin{defn}
We define modules, 
\begin{gather*}
M^{|F|}:=\Hom_{\cO_{D}}(\Xi_{F},\cO_{D})\text{ and}\\
M^{\left\langle F\right\rangle }:=M^{|F|}\otimes_{\cO_{D}}\cO_{F}=\Hom_{\cO_{D}}(\Xi_{F},\cO_{F}),
\end{gather*}
which are free modules of rank $d$ over $\cO_{D}$ and $\cO_{F}$
respectively. We define an $\cO_{D}$-linear map 
\begin{align*}
u^{*}=u_{F}^{*}:M & \to M^{\left\langle F\right\rangle }\\
m & \mapsto(\Xi\ni f\mapsto f(m)\in\cO_{F}),
\end{align*}
identifying $\Xi_{F}$ with $\Hom_{\cO_{D}}^{H}(M,\cO_{F})$ rather
than $\Hom_{\cO_{D}}^{G}(M,\cO_{E})$. \end{defn}
\begin{lem}
\label{lem:equivariant in two ways}We suppose that $H$ and $C_{G}(H)$
acts on $M$ by restricting the given $G$-action.
\begin{enumerate}
\item With respect to the $H$-action on $M^{\left\langle F\right\rangle }$
induced from the $H$-action on $\cO_{F}$, the map $u^{*}$ is $H$-equivariant.
\item With respect to the $C_{G}(H)$-action on $M^{\left\langle F\right\rangle }$
induced from the (left) $C_{G}(H)$-action on $\Xi_{F}$, the map
$u^{*}$ is $C_{G}(H)$-equivariant. 
\end{enumerate}
\end{lem}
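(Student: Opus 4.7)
The plan is to verify both equivariance claims by direct evaluation on arbitrary elements, using the definitions unpacked through the identifications $\Xi_F = \Hom_{\cO_D}^H(M, \cO_F) = \Hom_{\cO_D}^G(M, \cO_E)$ and $M^{\left\langle F\right\rangle} = \Hom_{\cO_D}(\Xi_F, \cO_F)$.

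For part (1), fix $h \in H$ and $m \in M$, with $H$ acting on $M$ from the right (consistent with the convention that a left $G$-action on $V$ induces a right action on $\cO_V \supset M$). I would evaluate both $u^*(m \cdot h)$ and $u^*(m) \cdot h$ on an arbitrary $f \in \Xi_F$: the former equals $f(m \cdot h)$ by the very definition of $u^*$, while the latter equals $f(m) \cdot h$ by the definition of the $H$-action on $M^{\left\langle F\right\rangle}$ induced from the $H$-action on $\cO_F$. These agree because $f$, as an element of $\Hom_{\cO_D}^H(M, \cO_F)$, is $H$-equivariant by definition.

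For part (2), the main preparatory step is to unpack the $C_G(H)$-action on $\Xi_F$ concretely. Using the identification $\Xi_F = \Hom_{\cO_D}^G(M, \cO_E)$ and formula~(\ref{eq:action on Xi}), for $a \in C_G(H)$ acting on $V$ through the left $G$-action, the action on $f \in \Xi_F$ works out to $(a \cdot f)(m) = f(m \cdot a)$. The induced left $C_G(H)$-action on $M^{\left\langle F\right\rangle}$ is then $(a \cdot \phi)(f) = \phi(a^{-1} \cdot f)$, and the restricted left $C_G(H)$-action on $M$ is $a \cdot m = m \cdot a^{-1}$. The verification reduces to the identity
\[
u^*(a \cdot m)(f) \;=\; f(m \cdot a^{-1}) \;=\; (a^{-1} \cdot f)(m) \;=\; (a \cdot u^*(m))(f),
\]
in which the middle equality is tautological and the outer ones are unpackings of the definitions.

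The only real obstacle is bookkeeping: one must keep left/right and inverse conventions coherent, since the given $G$-action on $V$ is left, the induced action on $M$ is right, and the identification $\Aut(E) \cong C_G(H)^{\op}$ forces an opposite into the action on $\Xi_F$. Once the conventions are fixed once and for all, both statements collapse to direct consequences of the $H$-equivariance of elements of $\Xi_F$ (for part (1)) and of the definition of the $C_G(H)$-action on $\Xi_F$ (for part (2)).
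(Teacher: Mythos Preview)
Your proof is correct and follows essentially the same approach as the paper: both parts are direct verifications by evaluating on elements, using $H$-equivariance of $f\in\Xi_F$ for part~(1) and the action formula derived from~(\ref{eq:action on Xi}) for part~(2).

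There is one minor presentational difference in part~(2). The paper introduces the auxiliary module $M^{\langle E\rangle}:=\Hom_{\cO_D}(\Xi_F,\cO_E)$ and first proves that the map $u_E^*:M\to M^{\langle E\rangle}$ is $C_G(H)$-equivariant, exploiting the full $G$-equivariance $f(mg)=f(m)g$ of $f\in\Hom_{\cO_D}^G(M,\cO_E)$; it then deduces the result for $u_F^*$ via the factorization $M\xrightarrow{u_F^*}M^{\langle F\rangle}\hookrightarrow M^{\langle E\rangle}$. You instead work directly in $M^{\langle F\rangle}$, using only the formula $(a\cdot f)(m)=f(ma)$ together with the dual action. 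This is slightly more economical; the paper's detour through $\cO_E$ has the advantage of making transparent why the identification $\Xi_F=\Hom_{\cO_D}^G(M,\cO_E)$ is the natural setting for the $C_G(H)$-action. One small point you leave implicit: your formula $(a\cdot f)(m)=f(ma)$ is derived in the $\cO_E$-picture, while $u^*$ is defined in the $\cO_F$-picture, so strictly speaking you should note that the formula transfers along the identification (it does, since restriction to $\cO_F$ commutes with evaluation at $ma$). The paper's factorization argument handles this bookkeeping automatically.
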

\begin{proof}

\begin{enumerate}
\item For $h\in H$ and $m\in M$, we have 
\begin{align*}
u^{*}(mh) & =(f\mapsto f(mh))\\
 & =(f\mapsto f(m)h)\\
 & =(f\mapsto f(m))h,
\end{align*}
since $f\in\Xi$ are $H$-equivariant. This shows the assertion.
\item Let $M^{\left\langle E\right\rangle }:=\Hom_{\cO_{D}}(\Xi_{F},\cO_{E})$
and consider the natural map
\[
u_{E}^{*}:M\to M^{\left\langle E\right\rangle },\, m\mapsto(f\mapsto f(m)),
\]
now identifying $\Xi_{F}$ with $\Hom_{\cO_{D}}^{G}(M,\cO_{E})$.
This map is $C_{G}(H)$-equivariant. Indeed, for $g\in C_{G}(H)$
and $m\in M$, from (\ref{eq:action on Xi}), we have
\begin{align*}
u_{E}^{*}(mg) & =(f\mapsto f(mg))\\
 & =(f\mapsto f(m)g)\\
 & =(f\mapsto(gf)(m)).
\end{align*}
 The map $u_{E}^{*}$ factors as
\[
M\xrightarrow{u_{F}^{*}}M^{\left\langle F\right\rangle }\hookrightarrow M^{\left\langle E\right\rangle }.
\]
Since the inclusion $M^{\left\langle F\right\rangle }\hookrightarrow M^{\left\langle E\right\rangle }$
is also $C_{G}(H)$-equivariant, so does $u_{F}^{*}$. 
\end{enumerate}
\end{proof}
Note that the $H$- and $C_{G}(H)$- actions above on $M^{\left\langle F\right\rangle }$
commute.
\begin{defn}
We define the \emph{untwisting variety }(resp. \emph{pre-untwisting)
variety }of $V$ with respect to $F$ as 
\[
V^{|F|}:=\Spec S_{\cO_{D}}^{\bullet}M^{|F|}=\AA_{D}^{d}\text{ (resp. }V^{\left\langle F\right\rangle }:=\Spec S_{\cO_{F}}^{\bullet}M^{\left\langle F\right\rangle }=\AA_{F}^{d}).
\]
We denote the projection $V^{\left\langle F\right\rangle }\to V^{|F|}$
by $r=r_{F}$, $r$ standing for the restriction of scalars (see diagram
(\ref{eq:fundamental square}) below). The map $u^{*}$ defines a
$D$-morphism
\[
u:V^{\left\langle F\right\rangle }\to V,
\]
which is both $H$- and $C_{G}(H)$-equivariant. We call the pair
of $r$ and $u$ the \emph{untwisting correspondence }of $V$ with
respect to $F$. 
\end{defn}
Let $X:=V/G$ and identify $\cO_{X}$ with $(\cO_{V})^{G}$. Since
the $H$-invariant subring of $\cO_{V^{\left\langle F\right\rangle }}$
is 
\[
(\cO_{V^{\left\langle F\right\rangle }})^{H}=\cO_{V^{|F|}},
\]
we have
\[
u^{*}(\cO_{X})\subset\cO_{V^{|F|}}.
\]
We denote the induced morphism $V^{|F|}\to X$ by $p^{|F|}$. We have
the following commutative diagram: 
\begin{equation}
\xymatrix{ & V^{\left\langle F\right\rangle }=\AA_{F}^{d}\ar[dl]_{u}\ar[dr]^{r}\\
V=\AA_{D}^{d}\ar[dr]_{p} &  & V^{|F|}=\AA_{D}^{d}\ar[dl]^{p^{|F|}}\\
 & X=V/G
}
\label{eq:fundamental square}
\end{equation}

\begin{lem}

\begin{enumerate}
\item The map 
\begin{eqnarray*}
\Hom_{F}^{H}(F,V^{\left\langle F\right\rangle }) & \to & \Xi_{F}=\Hom_{D}^{H}(F,V)\\
\gamma & \mapsto & u\circ\gamma
\end{eqnarray*}
is bijective. 
\item The map 
\[
\Hom_{F}^{H}(F,V^{\left\langle F\right\rangle })\to J_{\infty}V^{|F|}=\Hom_{D}(D,V^{|F|})
\]
sending a morphism $F\to V^{\left\langle F\right\rangle }$ to the
induced one of quotients, 
\[
D=F/H\to V^{|F|}=V^{\left\langle F\right\rangle }/H,
\]
is bijective.
\end{enumerate}
\end{lem}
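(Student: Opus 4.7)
\medskip
\noindent\textbf{Proof proposal.} The key structural observation is that $V^{\left\langle F\right\rangle}$ is the base change of $V^{|F|}$ along $F \to D$: since $M^{\left\langle F\right\rangle} = M^{|F|} \otimes_{\cO_{D}} \cO_{F}$, we have
\[
S^{\bullet}_{\cO_{F}} M^{\left\langle F\right\rangle} = \bigl(S^{\bullet}_{\cO_{D}} M^{|F|}\bigr) \otimes_{\cO_{D}} \cO_{F},
\]
and $r: V^{\left\langle F\right\rangle} \to V^{|F|}$ corresponds to the first projection. By Lemma \ref{lem:equivariant in two ways}(1), the induced $H$-action on $V^{\left\langle F\right\rangle}$ is trivial on the $V^{|F|}$-factor and is the natural action on $F$. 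Both parts of the lemma will follow from this decomposition together with the equivariance information already in hand.

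For part (2), an $F$-morphism $\gamma: F \to V^{|F|} \times_{D} F$ is a section of the second projection, hence determined by its first component $\gamma_{1}: F \to V^{|F|}$. Since $H$ acts trivially on $V^{|F|}$, the morphism $\gamma$ is $H$-equivariant if and only if $\gamma_{1}$ is $H$-invariant, if and only if $\gamma_{1}$ factors as $\bar\gamma_{1} \circ (F \to F/H) = \bar\gamma_{1} \circ (F \to D)$ for a unique $\bar\gamma_{1} \in \Hom_{D}(D, V^{|F|})$. The assignment $\gamma \mapsto \bar\gamma_{1}$ is the claimed bijection, and the identification $V^{\left\langle F\right\rangle}/H = V^{|F|} \times_{D} (F/H) = V^{|F|}$ shows that it coincides with the map induced on $H$-quotients.

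For part (1), an $F$-morphism $\gamma$ is the same as an $\cO_{F}$-linear map $\phi: M^{\left\langle F\right\rangle} \to \cO_{F}$. Tensor-hom adjunction together with the freeness of $\Xi_{F}$ supplies a chain of canonical isomorphisms
\[
\Hom_{\cO_{F}}(M^{\left\langle F\right\rangle}, \cO_{F}) \cong \Hom_{\cO_{D}}(M^{|F|}, \cO_{F}) \cong \Xi_{F} \otimes_{\cO_{D}} \cO_{F},
\]
under which the $H$-action becomes the natural action on the $\cO_{F}$-factor; taking $H$-invariants yields $\Xi_{F} \otimes_{\cO_{D}} (\cO_{F})^{H} = \Xi_{F}$. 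To see that the resulting bijection is $\gamma \mapsto u \circ \gamma$, one unwinds the definitions: if $\phi$ corresponds to $\xi \in \Xi_{F}$, then $\phi$ is the evaluation map $M^{\left\langle F\right\rangle} = \Hom_{\cO_{D}}(\Xi_{F}, \cO_{F}) \to \cO_{F}$, $\eta \mapsto \eta(\xi)$, and $\phi \circ u^{*}(m) = u^{*}(m)(\xi) = \xi(m)$ for every $m \in M$, giving $u \circ \gamma = \xi$ as elements of $\Hom_{\cO_{D}}^{H}(M, \cO_{F}) = \Xi_{F}$.

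The only real obstacle is bookkeeping: keeping track of the several $H$-actions (on $F$, on $M^{\left\langle F\right\rangle}$, and on each Hom space) and verifying that they match up correctly under each canonical identification. Once the product decomposition $V^{\left\langle F\right\rangle} = V^{|F|} \times_{D} F$ and the equivariance statements of Lemma \ref{lem:equivariant in two ways} are in place, both parts of the lemma are formal.
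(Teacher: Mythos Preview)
Your argument is correct. For part (2) you are spelling out exactly the paper's one-line proof (``give the converse by the base change associated to $F\to D$''): the identification $V^{\left\langle F\right\rangle}=V^{|F|}\times_{D}F$ makes an $H$-equivariant $F$-section the same thing as a $D$-point of $V^{|F|}$.

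For part (1) your route differs from the paper's. The paper writes down an explicit candidate inverse
\[
b:\Xi_{F}\to\Hom_{\cO_{F}}^{H}(M^{\left\langle F\right\rangle},\cO_{F}),\qquad f\mapsto\bigl(z\mapsto z(f)\bigr),
\]
checks directly that $a\circ b=\mathrm{id}$ (so $a$ is surjective), and then concludes by observing that $a$ is an $\cO_{D}$-linear map between free $\cO_{D}$-modules of the same rank $d$. You instead use tensor--hom adjunction and double duality of the free module $\Xi_{F}$ to identify both sides abstractly, and then unwind to check the map is $\gamma\mapsto u\circ\gamma$. Your approach is a little more conceptual and avoids the rank argument; the paper's is more hands-on but makes the inverse visible. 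One small point of care: the ``$H$-action'' you invoke does not literally preserve $\cO_{F}$-linearity on $\Hom_{\cO_{F}}(M^{\left\langle F\right\rangle},\cO_{F})$, but after the first isomorphism to $\Hom_{\cO_{D}}(M^{|F|},\cO_{F})$ the action through $\cO_{F}$ is well defined and the $H$-equivariance condition becomes exactly $H$-invariance there, so your conclusion stands.
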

\begin{proof}

\begin{enumerate}
\item With the identification, 
\[
\Hom_{F}^{H}(F,V^{\left\langle F\right\rangle })=\Hom_{\cO_{F}}^{H}(\Hom_{\cO_{D}}(\Xi_{F},\cO_{F}),\cO_{F}).
\]
the map of the assertion is identified with the map
\begin{align*}
a:\Hom_{\cO_{F}}^{H}(\Hom_{\cO_{D}}(\Xi_{F},\cO_{F}),\cO_{F}) & \to\Xi_{F}\\
\phi & \mapsto(m\mapsto\phi((f\mapsto f(m)))),
\end{align*}
where $m\in M$ and $f\in\Xi_{F}$. Let us consider the map
\begin{align*}
b:\Xi_{F} & \to\Hom_{\cO_{F}}^{H}(\Hom_{\cO_{D}}(\Xi_{F},\cO_{F}),\cO_{F})\\
f & \mapsto(z\mapsto z(f)),
\end{align*}
where $z\in\Hom_{\cO_{D}}(\Xi_{F},\cO_{F})$. The composition $a\circ b$
sends $f\in\Xi_{F}$ to 
\begin{align*}
 & \left(m\mapsto\left(z\mapsto z(f)\right)\left(h\mapsto h(m)\right)\right)\\
 & =\left(m\mapsto f(m)\right)\\
 & =f,
\end{align*}
and hence is the identity map. It follows that $a$ is surjective.
Now the assertion follows from the fact that the source and target
of $a$ are free $\cO_{D}$-modules of the same rank and $a$ is a
homomorphism of $\cO_{D}$-modules. 
\item We can give the converse by the base change associated to $F\to D$. 
\end{enumerate}
\end{proof}
In summary, we have a one-to-one correspondence between $\Xi_{F}$
and $J_{\infty}V^{|F|}$, induced from the untwisting correspondence.
From Lemma \ref{lem:equivariant in two ways}, the correspondence
is compatible with the $C_{G}(H)$-actions on both sides. Therefore
it descends to a one-to-one correspondence between $J_{\infty}^{G,E}V$
and $(J_{\infty}V^{|F|})/C_{G}(H)$. We obtain the following commutative
diagram: 
\begin{equation}
\xymatrix{ & \Hom_{F}^{H}(F,V^{\preuntwisting})\ar@{<->}[dl]_{\text{1-to-1}}\ar@{<->}[dr]^{\text{\text{1-to-1}}}\\
\Xi_{F}\ar[d]\ar@{<->}[rr]^{\text{1-to-1}} &  & J_{\infty}V^{|F|}\ar[d]\\
J_{\infty}^{G,E}V\ar[dr]_{p_{\infty}}\ar@{<->}[rr]^{\text{1-to-1}} &  & (J_{\infty}V^{|F|})/C_{G}(H)\ar[dl]\\
 & J_{\infty}X
}
\label{pentagon}
\end{equation}
For $n<\infty$, we have a similar diagram:
\begin{equation}
\xymatrix{ & \pi_{n}(\Hom_{F}^{H}(F,V^{\preuntwisting}))\ar@{->>}[dl]_{\beta}\ar@{<->}[dr]^{\text{\text{1-to-1}}}\\
\pi_{n}(\Xi_{F})\ar[d] &  & J_{n}V^{|F|}\ar[d]\\
\pi_{n}(J_{\infty}^{G,E}V)\ar[dr] &  & (J_{n}V^{|F|})/C_{G}(H)\ar[dl]\\
 & J_{n}X
}
\label{pentagon-1}
\end{equation}
Note that the arrow $\beta$ has no longer bijective. When $n=0$,
the diagram is represented as:
\begin{equation}
\xymatrix{ & V_{0}^{\left\langle F\right\rangle }\ar@{->>}[dl]_{\beta}\ar@{<->}[dr]^{\text{\text{1-to-1}}}\\
(V_{0})^{H}\ar[d] &  & V_{0}^{|F|}\ar[d]\\
(V_{0})^{H}/C_{G}(H)\ar[dr] &  & (V_{0}^{|F|})/C_{G}(H)\ar[dl]\\
 & X_{0}
}
\label{pentagon-1-1}
\end{equation}
Here $(V_{0})^{H}$ is the fixed-point locus of the $H$-action on
$V_{0}$.

\section{The change of variables formula}

The untwisting technique, discussed in the last section, enables us
to deduce a conjectural change of variables formula for the map $p_{\infty}:J_{\infty}^{G}V\to J_{\infty}X$.
In turn, it will derive the McKay correspondence for linear actions
in the next section. 

We keep the notation from the last section. 
\begin{defn}
Let $f:T\to S$ be a morphism of $D$-varieties which is generically
\'{e}tale. The \emph{Jacobian ideal (sheaf)} 
\[
\Jac_{f}=\Jac_{T/S}\subset\cO_{T}
\]
is defined as the $0$-th Fitting ideal (sheaf) of $\Omega_{T/S}$,
the sheaf of K\"{a}hler differentials. We denote by $\bj_{f}$ the
order function of $\Jac_{f}$ on $J_{\infty}T$, $(J_{\infty}T)/G$
or $J_{\infty}^{G}T$ if $T$ has a faithful action of a finite group
$G$. The ambiguity of the domain will not cause a confusion.\end{defn}
\begin{rem}
When $T$ is smooth, the function $\bj_{f}$ on $J_{\infty}T$ coincides
with the Jacobian order function, denoted by $\ord\,\mathrm{jac}_{f}$,
in \cite{MR2075915} and mentioned in the proof of Proposition \ref{prop:crepant equal stringy}. \end{rem}
\begin{conjecture}
\label{conj: change of variables for untwisted }Let the assumption
be as in Section \ref{sec:The-untwisting-technique}. Let $\Phi:J_{\infty}X\supset A\to\cR\cup\{\infty\}$
be a measurable function with $A\subset p_{\infty}(J_{\infty}^{G,E}V)$
and let $p_{(\infty)}^{|F|}$ be the natural map $(J_{\infty}V^{|F|})/C_{G}(H)\to J_{\infty}X$.
We have 
\begin{align*}
\int_{A}\Phi\, d\mu_{J_{\infty}X} & =\int_{(p_{(\infty)}^{|F|})^{-1}(A)}(\Phi\circ p_{(\infty)}^{|F|})\LL^{-\bj_{p^{|F|}}}\, d\mu_{(J_{\infty}V^{|F|})/C_{G}(H)}.
\end{align*}

\end{conjecture}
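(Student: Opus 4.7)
\emph{Proof proposal.} My plan is to combine the untwisting correspondence of diagram (\ref{pentagon}) with a standard change of variables formula for the finite, generically \'{e}tale morphism $p^{|F|}: V^{|F|} \to X$. By the usual density arguments in motivic integration, I will first reduce to the case where $\Phi$ is the indicator function of a stable cylinder $A \subset J_\infty X$; by the same kind of resolution argument as in Proposition \ref{prop:crepant equal stringy}, I may further assume $A$ is disjoint from the measure-zero set of arcs meeting the branch locus of $p^{|F|}$.

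Next I would exploit the pentagon diagram: $p^{|F|}_{(\infty)}: (J_\infty V^{|F|})/C_G(H) \to J_\infty X$ is bijective onto its image up to measure zero, since it factors as $(J_\infty V^{|F|})/C_G(H) \cong J_\infty^{G,E}V \xrightarrow{p_\infty} J_\infty X$ with the second arrow generically injective. The plan then has three steps: (i) rewrite the right-hand side as an integral over $J_\infty V^{|F|}$ itself, introducing a factor $|C_G(H)|^{-1}$ coming from the definition of the quotient measure in Section \ref{sub:Motivic-integration}; (ii) apply the Denef--Loeser--Sebag change of variables formula to $p^{|F|}$ to push the integral to $J_\infty X$, which contributes the Jacobian factor $\mathbb{L}^{-\mathbf{j}_{p^{|F|}}}$; and (iii) observe that the generic degree $|C_G(H)|$ of $p^{|F|}$ produces a multiplicative factor cancelling the $|C_G(H)|^{-1}$ introduced in Step (i).

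The hardest part will be Step (ii): the Sebag formula is classically established for proper birational morphisms, whereas $p^{|F|}$ is only generically finite of degree $|C_G(H)|$. I would try to address this either by adapting the formula directly, tracking the contributions from each branch of $p^{|F|}$, or by reducing to the birational case through a common smooth modification dominating both $V^{|F|}$ and $X$, in the spirit of Proposition \ref{prop:crepant equal stringy}. The latter strategy is straightforward in characteristic zero but presupposes resolution of singularities, which is currently unavailable in the wild case. A further obstacle, and the reason the statement remains conjectural here, is that the very definition of $\mu_{(J_\infty V^{|F|})/C_G(H)}$ (and similarly of $J_\infty^{G}V$) depends on the conjectural finiteness hypotheses of Remark \ref{rem: finite-dimensional strata}, so any complete argument must be predicated on those.
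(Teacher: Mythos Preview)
The paper does not prove this conjecture; it only remarks that it ``would be proved by using existing techniques and arguments from'' Denef--Loeser and Sebag. So there is no detailed argument to compare against, but your three-step plan contains a genuine misconception.

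Your Step~(i) asserts that passing from $(J_\infty V^{|F|})/C_G(H)$ to $J_\infty V^{|F|}$ introduces a factor $|C_G(H)|^{-1}$. In this paper that is false: the quotient measure is defined (in the subsection on group actions, not in Section~\ref{sub:Motivic-integration}) through the convention $[\AA_k^n/G]=\LL^n$ in $\cR$, so finite quotients do not scale measures by $1/|G|$; indeed $[S]\ne|G|\cdot[S/G]$ in general for a free action. Your Step~(iii) asserts that $p^{|F|}$ has generic degree $|C_G(H)|$. As a morphism of varieties it has degree $|G|$: from diagram~(\ref{eq:fundamental square}) one reads $[K(V^{\langle F\rangle}):K(X)]=|H|\cdot|G|$ along the left and $|H|\cdot\deg p^{|F|}$ along the right. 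What the pentagon~(\ref{pentagon}) actually gives is that over an arc in $p_\infty(J_\infty^{G,E}V)$ the fibre of $J_\infty V^{|F|}\to J_\infty X$ is a single free $C_G(H)$-orbit; that is a statement about arcs over a non-generic locus, not about $\deg p^{|F|}$, and it does not translate into a scalar factor in $\cR$. So the cancellation you propose between (i) and (iii) is illusory on both ends.

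The route the paper has in mind avoids this entirely: one works directly with the almost-bijection $p^{|F|}_{(\infty)}:(J_\infty V^{|F|})/C_G(H)\to J_\infty X$ onto its image and shows, at each finite jet level, that the fibre of $\pi_n(J_\infty V^{|F|})/C_G(H)\to J_nX$ over $\pi_n(\gamma)$ is homeomorphic to $\AA_k^{\bj_{p^{|F|}}(\gamma)}$ modulo a linear finite group action, hence has class $\LL^{\bj_{p^{|F|}}(\gamma)}$ in $\cR$. This is precisely the fibre-dimension lemma behind Sebag's formula, transported to the quotient; compare the discussion surrounding Conjecture~\ref{conj: the key lemma}. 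A final remark: your last obstacle is misplaced. Both $\mu_{J_\infty X}$ and $\mu_{(J_\infty V^{|F|})/C_G(H)}$ are unconditionally defined here, since $V^{|F|}\cong\AA_D^d$; the finiteness hypotheses of Remark~\ref{rem: finite-dimensional strata} are needed for $\mu_{J_\infty^G V}$ and for $\GCov(D)$, neither of which enters this particular statement.
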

This conjecture would be proved by using existing techniques and arguments
from \cite{MR1905024} and \cite{MR2075915}.
\begin{defn}[\cite{Yasuda:2013fk}.]
\label{def: weight}For $E\in\GCov(D)$ with a connected component
$F$, we define the \emph{weights }of $E$ and $F$ with respect to
$V$ as
\begin{align*}
\bw_{V}(E)=\bw_{V}(F) & :=\codim((V_{0})^{H},V_{0})-\bv_{V}(E)
\end{align*}
with
\[
\bv_{V}(E)=\bv_{V}(F):=\frac{1}{\sharp G}\cdot\length\frac{\Hom_{\cO_{D}}(M,\cO_{E})}{\cO_{E}\cdot\Xi_{F}}=\frac{1}{\sharp H}\cdot\length\frac{\Hom_{\cO_{D}}(M,\cO_{F})}{\cO_{F}\cdot\Xi_{F}}.
\]
For the generalization to the case where $k$ is only perfect, see
\cite{Wood-Yasuda-I}.
\end{defn}
The definition above gives the \emph{weight function,} 
\[
\bw_{V}:\GCov(D)\to\frac{1}{\sharp G}\ZZ.
\]
We will denote the composition 
\[
J_{\infty}^{G}V\to\GCov(D)\to\frac{1}{\sharp G}\ZZ
\]
again by $\bw_{V}$. 
\begin{defn}
For an ideal $I\subset\cO_{V}$ stable under the $G$-action and a
$G$-arc $\gamma:E\to V$, we define a function 
\[
\ord\, I:J_{\infty}^{G}V\to\frac{1}{\sharp G}\ZZ\cup\{\infty\}
\]
by 
\[
(\ord\, I)(\gamma):=\frac{1}{\sharp G}\length\frac{\cO_{E}}{\gamma^{-1}I}=\frac{1}{\sharp H}\length\frac{\cO_{F}}{(\gamma|_{F})^{-1}I}.
\]
We then extend this to $G$-stable fractional ideals and $G$-stable
$\QQ$-linear combinations of closed subschemes as in Definition \ref{def: order function}.
\end{defn}
The conjectural change of variables formula is stated as follows: 
\begin{conjecture}[\cite{Yasuda:2013fk}]
\label{conj:change-vars-linear}For a measurable function $\Phi:J_{\infty}X\supset C\to\cR\cup\{\infty\}$,
we have 
\[
\int_{C}\Phi\, d\mu_{J_{\infty}X}=\int_{p_{\infty}^{-1}(C)}(\Phi\circ p_{\infty})\LL^{-\bj_{p}+\bw_{V}}\, d\mu_{J_{\infty}^{G}V}.
\]

\end{conjecture}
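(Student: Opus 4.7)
The strategy is to verify the formula one $G$-cover at a time via the untwisting correspondence, reducing to the untwisted change-of-variables formula of Conjecture \ref{conj: change of variables for untwisted }. First I would decompose the right-hand side according to the stratification $J_{\infty}^{G}V = \bigsqcup_{E \in \GCov(D)} J_{\infty}^{G,E}V$, noting that $\bw_{V}$ is constant on each piece $J_{\infty}^{G,E}V$ with value $\bw_{V}(E)$. The identity then reduces to the assertion that, for each fixed $E$ with a chosen component $F$ and stabilizer $H$, the restriction
\[
\int_{C \cap p_{\infty}(J_{\infty}^{G,E}V)} \Phi \, d\mu_{J_{\infty}X} = \int_{p_{\infty}^{-1}(C) \cap J_{\infty}^{G,E}V} (\Phi \circ p_{\infty}) \LL^{-\bj_{p} + \bw_{V}(E)} \, d\mu_{J_{\infty}^{G}V}
\]
holds. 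Summing over $E \in \GCov(D)$ will then give the conjecture.

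For such a fixed $E$, I would apply Conjecture \ref{conj: change of variables for untwisted } to $p^{|F|} : V^{|F|} \to X$, rewriting the left side as an integral over $(J_{\infty}V^{|F|})/C_{G}(H)$ against the density $\LL^{-\bj_{p^{|F|}}}$. The untwisting bijection of diagram (\ref{pentagon}) identifies this domain with $J_{\infty}^{G,E}V \cap p_{\infty}^{-1}(C)$ and carries $\Phi \circ p^{|F|}_{(\infty)}$ to $\Phi \circ p_{\infty}$. What remains is the pointwise density comparison: to show that, under this bijection,
\[
\LL^{-\bj_{p^{|F|}}} \, d\mu_{(J_{\infty}V^{|F|})/C_{G}(H)} \;\longleftrightarrow\; \LL^{-\bj_{p} + \bw_{V}(E)} \, d\mu_{J_{\infty}^{G}V}.
\]

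I would split this density computation into two independent pieces. The first is a Jacobian comparison along diagram (\ref{eq:fundamental square}): since $p$ factors generically as a $G$-quotient whereas $p^{|F|}$ factors generically as an $H$-quotient (with $C_{G}(H)$ acting freely after untwisting), a direct Fitting-ideal calculation for the factorization through the pre-untwisting variety $V^{\left\langle F\right\rangle}$ should yield the identity $\bj_{p} - \bj_{p^{|F|}} = \codim((V_{0})^{H}, V_{0})$ on the locus of interest. The second is a measure-discrepancy computation at the $n$-jet level: both measures are built from truncations of the common free $\cO_{D}$-module $\Xi_{F}$, but $\mu_{J_{\infty}^{G}V}$ uses the truncations $\pi_{n}(\Xi_{F})$ arising from $\cO_{F}/\fm_{F}^{nh+1}$, while $\mu_{(J_{\infty}V^{|F|})/C_{G}(H)}$ uses $J_{n}V^{|F|}$ arising from $\cO_{D}/\fm_{D}^{n+1}$. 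The systematic gap between these two dimensions is controlled by the length defining $\bv_{V}(E)$, producing a Radon--Nikodym factor of $\LL^{\bv_{V}(E)}$. Combining the two pieces and invoking $\bw_{V}(E) = \codim((V_{0})^{H}, V_{0}) - \bv_{V}(E)$ gives the claim.

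The hardest part will be the measure-theoretic foundations: the existence of $\GCov(D)$ and $J_{n}^{G}V$ as $k$-schemes with countable finite-dimensional stratifications is itself conjectural in the wild case (Remark \ref{rem: finite-dimensional strata}), so the measure $\mu_{J_{\infty}^{G}V}$ is not yet on fully rigorous footing. Granting this, the most delicate technical point is the $n$-jet level comparison: one must carefully track how the saturation of $\Xi_{F}$ inside $\Hom_{\cO_{D}}(M, \cO_{F})$ interacts with the ramification of $F / D$ to produce exactly $\LL^{\bv_{V}(E)}$ as a stable Radon--Nikodym density, with no spurious $n$-dependent correction in the limit. Once this is in place, the Jacobian identity should follow from standard manipulations of order functions together with the factorization $p^{|F|} = (u \text{ descended through } H)$, and the change-of-variables formula of Sebag \cite{MR2075915} can be invoked stratum-by-stratum to complete the argument.
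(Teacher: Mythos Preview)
Your overall strategy matches the paper's own heuristic derivation: decompose over $E\in\GCov(D)$, invoke Conjecture~\ref{conj: change of variables for untwisted } for $p^{|F|}$, and compare back to $J_{\infty}^{G,E}V$ through the untwisting correspondence. Note that the paper does not prove this statement either---it is stated as a conjecture, and what follows it is an explanation via Conjecture~\ref{conj: the key lemma} (fibre dimensions of $p_n$) rather than a proof.

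That said, your density comparison has the two contributions interchanged. The Fitting-ideal computation through the pre-untwisting variety is exactly Lemma~\ref{lem:explicit Jac}, and plugging it into the factorisation $V^{\langle F\rangle}\to V\times_D F\to X\times_D F$ gives
\[
\bj_{p^{|F|}}=\bj_{V^{\langle F\rangle}/X\times_D F}=\bj_{V\times_D F/X\times_D F}+\bj_{V^{\langle F\rangle}/V\times_D F}=\bj_{p}+\bv_{V}(E),
\]
so $\bj_{p}-\bj_{p^{|F|}}=-\bv_{V}(E)$, not $\codim((V_{0})^{H},V_{0})$. Your heuristic ``$p$ is a $G$-quotient whereas $p^{|F|}$ is an $H$-quotient'' does not produce the identity you claim (in fact $p$ and $p^{|F|}$ have the same generic degree). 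Conversely, the discrepancy between the two truncation systems is governed by the surjection $\beta$ in diagrams~(\ref{pentagon-1})--(\ref{pentagon-1-1}); at level $n=0$ this is $V_{0}^{\langle F\rangle}\twoheadrightarrow(V_{0})^{H}$, with fibre dimension $\codim((V_{0})^{H},V_{0})$---it is not controlled by the cokernel length defining $\bv_{V}(E)$. Thus each of your two intermediate claims is false as stated, even though the sum $\codim-\bv_{V}=\bw_{V}$ still closes up; the justifications you sketch for the individual pieces would not survive scrutiny.
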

To explain where the formula comes from, we first show a lemma:
\begin{lem}
\label{lem:explicit Jac}We have
\[
\Jac_{V^{\preuntwisting}/V\times_{D}F}=\fm_{F}^{\sharp H\cdot\bv_{V}(F)}\cO_{V^{\left\langle F\right\rangle }}.
\]
\end{lem}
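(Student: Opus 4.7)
The plan is to reduce the Jacobian ideal of the $F$-morphism $V^{\left\langle F\right\rangle}\to V\times_{D}F$ to the determinant of a single $\cO_{F}$-linear map, and then identify the length of its cokernel with the length appearing in Definition \ref{def: weight} via a duality argument.

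First I observe that $V^{\left\langle F\right\rangle}=\Spec S_{\cO_{F}}^{\bullet}M^{\left\langle F\right\rangle}$ and $V\times_{D}F=\Spec S_{\cO_{F}}^{\bullet}(M\otimes_{\cO_{D}}\cO_{F})$ are both affine spaces over $F$ of relative dimension $d$, and the natural $F$-morphism $V^{\left\langle F\right\rangle}\to V\times_{D}F$ is induced by the $\cO_{F}$-linear map
\[
\phi:=u^{*}\otimes_{\cO_{D}}\mathrm{id}_{\cO_{F}}\colon M\otimes_{\cO_{D}}\cO_{F}\longrightarrow M^{\left\langle F\right\rangle}
\]
between free $\cO_{F}$-modules of rank $d$. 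Because this morphism is linear, $\Omega_{V^{\left\langle F\right\rangle}/V\times_{D}F}$ is the pull-back of $\mathrm{coker}(\phi)$ from $\cO_{F}$, so its $0$-th Fitting ideal, namely $\Jac_{V^{\left\langle F\right\rangle}/V\times_{D}F}$, is generated by the single element $\det\phi\in\cO_{F}\subset\cO_{V^{\left\langle F\right\rangle}}$. Since $\cO_{F}$ is a complete DVR with maximal ideal $\fm_{F}$, we have $(\det\phi)\cdot\cO_{F}=\fm_{F}^{\ell}$ for $\ell=\length_{\cO_{F}}\mathrm{coker}(\phi)$, and the lemma reduces to showing $\ell=\sharp H\cdot\bv_{V}(F)$.

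To compute $\ell$, I plan to recognize $\phi$ as an $\cO_{F}$-linear dual. Using the identifications $M^{\left\langle F\right\rangle}=\Hom_{\cO_{D}}(\Xi_{F},\cO_{F})$ and $\Hom_{\cO_{D}}(M,\cO_{F})=\Hom_{\cO_{F}}(M\otimes_{\cO_{D}}\cO_{F},\cO_{F})$, together with the reflexivity of $M\otimes_{\cO_{D}}\cO_{F}$, the evaluation pairing $\Xi_{F}\otimes_{\cO_{D}}M\to\cO_{F}$ gives rise on one side to $\phi$ and on the other to the natural inclusion
\[
\iota\colon\Xi_{F}\otimes_{\cO_{D}}\cO_{F}\longrightarrow\Hom_{\cO_{D}}(M,\cO_{F}),\quad f\otimes a\mapsto(m\mapsto af(m)).
\]
A direct unwinding will give $\phi=\Hom_{\cO_{F}}(\iota,\cO_{F})$. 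Both $\iota$ and $\phi$ are maps between free $\cO_{F}$-modules of rank $d$; the saturation of $\Xi_{F}$ in $\Hom_{\cO_{D}}(M,\cO_{F})$ and the perfectness of the pairing at the generic point (a consequence, in the wild case, of the normal basis theorem for $K(F)/K(D)$) ensure that both are injective with torsion cokernel.

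Finally, the determinants of an $\cO_{F}$-linear map between free modules of the same finite rank and of its $\cO_{F}$-dual agree up to a unit of $\cO_{F}$, so the cokernels have the same $\cO_{F}$-length. Hence
\[
\ell=\length_{\cO_{F}}\mathrm{coker}(\phi)=\length_{\cO_{F}}\mathrm{coker}(\iota)=\length_{\cO_{F}}\frac{\Hom_{\cO_{D}}(M,\cO_{F})}{\cO_{F}\cdot\Xi_{F}}=\sharp H\cdot\bv_{V}(F),
\]
the last equality being Definition \ref{def: weight}. I expect the only delicate step to be the duality identification $\phi=\iota^{\vee}$, which requires careful bookkeeping of the Hom and tensor identifications built into the definitions of $u^{*}$ and $M^{\left\langle F\right\rangle}$; once this is established, everything else is a short length computation over the DVR $\cO_{F}$.
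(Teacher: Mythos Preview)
Your argument is correct and follows the same overall line as the paper: both reduce $\Jac_{V^{\left\langle F\right\rangle}/V\times_{D}F}$ to the $0$-th Fitting ideal of the cokernel of the $\cO_{F}$-linear map $\phi\colon M\otimes_{\cO_{D}}\cO_{F}\to M^{\left\langle F\right\rangle}$, using that Fitting ideals commute with base change. The only difference is in the final length computation. The paper simply observes that, for suitable bases of $M\otimes_{\cO_{D}}\cO_{F}$ and $M^{\left\langle F\right\rangle}$, the matrix of $\phi$ is triangular, so one reads off the determinant directly as $\fm_{F}^{\sharp H\cdot\bv_{V}(F)}$. You instead give a conceptual argument: identifying $\phi$ with the $\cO_{F}$-dual of the inclusion $\iota\colon\Xi_{F}\otimes_{\cO_{D}}\cO_{F}\hookrightarrow\Hom_{\cO_{D}}(M,\cO_{F})$, and using that dualization preserves the determinant ideal, you recover the length directly from Definition~\ref{def: weight}. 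Your duality identification $\phi=\iota^{\vee}$ is indeed correct (both sides send $m\otimes b$ to the functional $f\otimes a\mapsto abf(m)$), and the injectivity of $\iota$ after inverting $\pi_{F}$ does follow from the normal basis theorem for $K(F)/K(D)$, as you note. Your route avoids choosing bases but costs a bit more bookkeeping; the paper's is shorter but leaves the choice of bases implicit.
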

\begin{proof}
Let $u':V^{\left\langle F\right\rangle }\to V\times_{D}F$ be the
natural map. We have the standard exact sequence
\[
(u')^{*}\Omega_{V\times_{D}F/F}\to\Omega_{V^{\preuntwisting}/F}\to\Omega_{V^{\preuntwisting}/V\times_{D}F}\to0.
\]
The left map is identical to the map 
\[
M\otimes_{\cO_{D}}\cO_{V^{\preuntwisting}}\to M^{\preuntwisting}\otimes_{\cO_{F}}\cO_{V^{\preuntwisting}}.
\]
Since the Fitting ideal is compatible with base changes (for instance,
see \cite[Cor. 20.5]{MR1322960}), if $I$ denotes the $0$th Fitting
ideal of 
\[
\mathrm{coker}\left(M\otimes_{\cO_{D}}\cO_{F}\to M^{\preuntwisting}\right),
\]
we have $\Jac_{V^{\preuntwisting}/V\times_{D}F}=I\cdot\cO_{V^{\preuntwisting}}$.
It is now easy to see that $I=\fm_{F}^{\sharp H\cdot\bv_{V}(F)}$,
for instance, by considering a triangular matrix representing the
map $M\otimes_{\cO_{D}}\cO_{F}\to M^{\preuntwisting}$ for suitable
bases. 
\end{proof}
Conjecture \ref{conj:change-vars-linear} can be guessed from the
following conjecture:
\begin{conjecture}
\label{conj: the key lemma}For $\gamma\in J_{\infty}^{G}V$ and $n\gg0$,
the fiber of the map
\[
p_{n}:\pi_{n}(J_{\infty}^{G}V)\to J_{n}X
\]
over the image of $\gamma$ is homeomorphic to a quotient of the affine
space 
\[
\AA_{k}^{(\bj_{p}-\bw_{V})(\gamma)}
\]
by a linear finite group action. 
\end{conjecture}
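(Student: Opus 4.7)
My plan is to reduce via the untwisting correspondence of Section \ref{sec:The-untwisting-technique} to the classical fiber-structure theorem on the smooth space $V^{|F|}$, and then combine two ingredients: (i) the dimension drop along the arrow $\beta$ in diagram (\ref{pentagon-1}), and (ii) a chain-rule identity for Jacobian ideals.

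Given $\gamma\in J_\infty^{G,E}V$, let $\gamma^{|F|}\in J_\infty V^{|F|}$ be the ordinary arc obtained via pentagon (\ref{pentagon}); these two arcs share the same image in $J_\infty X$. Since $V^{|F|}\cong\AA_D^d$ is smooth and $p^{|F|}:V^{|F|}\to X$ is generically \'etale, the standard fiber-structure theorem in motivic integration (cf.\ \cite{MR1905024,MR2075915}) gives, for $n\gg0$, a homeomorphism from the fiber of $p_n^{|F|}:J_nV^{|F|}\to J_nX$ over $p_n^{|F|}(\pi_n(\gamma^{|F|}))$ containing $\pi_n(\gamma^{|F|})$ onto $\AA_k^{\bj_{p^{|F|}}(\gamma^{|F|})}$. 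Since $C_G(H)$ acts $\cO_D$-linearly on $V^{|F|}$, it acts linearly on each $J_nV^{|F|}$; after passing to the quotient, the fiber becomes a quotient of this affine space by the linear stabilizer subgroup.

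Next, transport the conclusion across diagram (\ref{pentagon-1}) to $\pi_n(J_\infty^{G,E}V)$. The bijection $\Hom^H_F(F,V^{\langle F\rangle})=J_\infty V^{|F|}$ induces at finite level an isomorphism $\pi_n(\Hom^H_F(F,V^{\langle F\rangle}))\cong J_nV^{|F|}$, via $(\cO_F/\fm_F^{\sharp H\cdot n+1})^H=\cO_D/\fm_D^{n+1}$. However, the arrow $\beta$ from there to $\pi_n(\Xi_F)$ (composition with $u$, at finite level) is only surjective: analyzing $\Xi_F\subset\Hom_{\cO_D}(M,\cO_F)$ via an $H$-isotypical decomposition of $M\otimes_{\cO_D}k$ (in the wild case, an analogous filtration using the saturation property of $\Xi_F$), one finds that the $k$-dimension of $\pi_n(\Xi_F)\subset\Hom^H_D(F_n,V)$ equals $d(n+1)-\codim((V_0)^H,V_0)$. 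Hence the generic fibers of $\beta$ have dimension $\codim((V_0)^H,V_0)$, so applying $\beta$ to the fiber of $p_n^{|F|}$ yields an affine space of dimension $\bj_{p^{|F|}}(\gamma^{|F|})-\codim((V_0)^H,V_0)$, on which the stabilizer in $C_G(H)$ still acts linearly.

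To match this with $\bj_p(\gamma)-\bw_V(\gamma)$, apply the chain rule for Jacobian ideals to the two factorizations $p\circ u=p^{|F|}\circ r$ in diagram (\ref{eq:fundamental square}). Factoring $u:V^{\langle F\rangle}\to V$ as $V^{\langle F\rangle}\to V\times_DF\to V$, the first factor contributes $\sharp H\cdot\bv_V(F)$ via Lemma \ref{lem:explicit Jac}, while the contribution from the projection $V\times_DF\to V$ cancels exactly with $\ord\,\Jac_r$ (each being the different exponent of $F/D$, evaluated along $\gamma^{\langle F\rangle}$). Rearranging gives $\bj_{p^{|F|}}(\gamma^{|F|})=\bj_p(\gamma)+\bv_V(F)$, and combining with the codimension shift from the previous paragraph delivers the predicted fiber dimension $\bj_p(\gamma)+\bv_V(F)-\codim((V_0)^H,V_0)=\bj_p(\gamma)-\bw_V(\gamma)$ by the definition of $\bw_V$.

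The main obstacle will be the dimension count for $\pi_n(\Xi_F)$ in the wild case, where the $H$-action on $M\otimes_{\cO_D}k$ need not be semisimple and the isotypical decomposition must be replaced by a filtration whose graded pieces one analyzes using the saturation of $\Xi_F$. Verifying also that the linear group action descends correctly under $\beta$, so that the final fiber is genuinely a quotient of an affine space by a \emph{linear} finite group action, requires care. By contrast, the fiber-structure theorem in the first paragraph and the Jacobian chain-rule identity in the third are essentially routine within this framework.
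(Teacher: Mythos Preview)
Your proposal follows essentially the same heuristic route as the paper's own derivation of this conjecture (given immediately after its statement): reduce to a fixed $E$, use the untwisting correspondence to transport the question to the smooth $V^{|F|}$ where the classical fiber-structure theorem applies, account for the dimension drop along the map $\beta$ in diagram~(\ref{pentagon-1}) by $\codim((V_0)^H,V_0)$, and close with the Jacobian chain-rule identity via Lemma~\ref{lem:explicit Jac}. The paper writes the last step as
\[
\bj_{p^{|F|}}-\codim((V_0)^H,V_0)=\bj_{V^{\langle F\rangle}/X\times_DF}-\codim=(\bj_{V\times_DF/X\times_DF}+\bj_{V^{\langle F\rangle}/V\times_DF})-\codim=\bj_p-\bw_V,
\]
which is your identity $\bj_{p^{|F|}}=\bj_p+\bv_V(F)$ combined with the definition of $\bw_V$; your cancellation of the different exponent between $\Jac_{V\times_DF/V}$ and $\Jac_r$ is exactly what makes the first equality work. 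You also correctly flag the genuine gap---the fiber structure of $\beta$ in the wild case and the compatibility of the linear group action---which is why the paper leaves this as a conjecture rather than a theorem.

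One small caution: your asserted formula $\dim_k\pi_n(\Xi_F)=d(n+1)-\codim((V_0)^H,V_0)$ is in tension with the statement in the proof of Lemma~4.3 that $\pi_n(\Xi_F)\cong(\cO_D/\fm_D^{n+1})^{\oplus d}$. What actually matters for the argument is only the fiber dimension of $\beta$, which both you and the paper take to be $\codim((V_0)^H,V_0)$ (as is visible at level $n=0$ from diagram~(\ref{pentagon-1-1})); it would be worth making that step precise rather than relying on a global dimension count.
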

To see this, we first note that since two $G$-arcs $E\to V$ and
$E'\to V$ with $E\not\cong E'$ have distinct images in $J_{n}X$
for $n\gg0$, we can focus on $J_{\infty}^{G,E}V$ for fixed $E$.
Fixing a $G$-arc $\gamma:E\to V$, we consider the map
\[
(J_{n}V^{|F|})/C_{G}(H)\to J_{n}X.
\]
The fiber of this map over the image of $\gamma$ should be homeomorphic
to 
\[
\AA_{k}^{\bj_{p^{|F|}}(\gamma')}/A,
\]
where $\gamma'$ is an arc of $V^{|F|}$ corresponding to $\gamma$
and $A$ is a certain subgroup of $C_{G}(H)$ acting linearly on the
affine space. This fact would be proved in the course of proving Conjecture
\ref{conj: change of variables for untwisted }. On the other hand,
the map 
\[
\pi_{n}\left(\Hom_{F}^{H}(F,V^{\preuntwisting})\right)/C_{G}(H)\to\pi_{n}(J_{\infty}^{G}V)
\]
induced by $u$ has fibers homeomorphic to
\[
\AA_{k}^{\codim((V_{0})^{H},V_{0})}/B
\]
for some finite group $B$, which can be seen by looking at diagrams
(\ref{pentagon})-(\ref{pentagon-1-1}). From Lemma \ref{lem:explicit Jac},
\begin{align*}
 & \bj_{p^{|F|}}-\codim((V_{0})^{H},V_{0})\\
 & =\bj_{V^{\preuntwisting}/X\times_{D}F}-\codim((V_{0})^{H},V_{0})\\
 & =(\bj_{V\times_{D}F/X\times_{D}F}+\bj_{V^{\preuntwisting}/V\times_{D}F})-\codim((V_{0})^{H},V_{0})\\
 & =\bj_{p}-\bw_{V},
\end{align*}
concluding Conjecture \ref{conj: the key lemma}.

\section{The McKay correspondence for linear actions\label{sec:The-McKay-correspondence-linear}}

To state the McKay correspondence conjecture for linear actions, we
first define the notion of \emph{orbifold stringy motifs}. Keeping
the notation from the last section, let $\fX$, $\fV$, $\fV^{\left\langle F\right\rangle }$
and $\fV^{|F|}$ be centered log structures on $X$, $V$, $V^{\left\langle F\right\rangle }$
and $V^{|F|}$ respectively so that the following morphisms are all
crepant:
\[
\xymatrix{ & \fV^{\left\langle F\right\rangle }\ar[dr]\ar[dl]\\
\fV\ar[dr] &  & \fV^{|F|}\ar[dl]\\
 & \fX
}
\]
Since $X$ is $\QQ$-factorial, either $\fX$ or $\fV$ determines
the other centered log structures. The centered log structure $\fV$
is $G$-equivariant and $\fV^{|F|}$ $C_{G}(H)$-equivariant. 
\begin{defn}
We define the \emph{orbifold stringy motif }of the centered log $G$-$D$-variety
$\fV$ to be
\[
M_{\st}^{G}(\fV):=\int_{J_{\infty}^{G}\fV}\LL^{\bf_{\fV}+\bw_{V}}\, d\mu_{\fV}^{G}.
\]
Note that since $\fV$ is smooth over $D$, we have $\bf_{\fV}=\ord\,\Delta$
for the boundary $\Delta$ of $\fV$.
\end{defn}
Arguments as in the proof of Proposition \ref{prop:crepant equal stringy}
deduce the following conjecture from Conjecture \ref{conj:change-vars-linear}:
\begin{conjecture}[The motivic McKay correspondence for linear actions I]
\label{conj:wild mckay 1}We have
\[
M_{\st}(\fX)=M_{\st}^{G}(\fV).
\]

\end{conjecture}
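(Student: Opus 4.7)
The strategy is to derive Conjecture \ref{conj:wild mckay 1} from Conjecture \ref{conj:change-vars-linear} by a direct substitution, mimicking the derivation of Proposition \ref{prop:crepant equal stringy} from Sebag's change of variables formula, but in the $G$-equivariant setting where $p : V \to X$ plays the role of a ``crepant proper birational'' morphism through the machinery of $G$-arcs.

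First I would apply Conjecture \ref{conj:change-vars-linear} to the measurable function $\Phi := \LL^{\bf_\fX}$ with $C := J_{\infty}\fX$. This rewrites the defining integral
\[
M_{\st}(\fX) \;=\; \int_{J_{\infty}\fX} \LL^{\bf_\fX}\, d\mu_{J_{\infty}X}
\]
as
\[
\int_{p_{\infty}^{-1}(J_{\infty}\fX)} \LL^{\bf_\fX \circ p_{\infty} - \bj_{p} + \bw_{V}}\, d\mu_{J_{\infty}^{G}V}.
\]
Next, I would check that $p_{\infty}^{-1}(J_{\infty}\fX) = J_{\infty}^{G}\fV$ modulo measure zero: this is immediate from the crepancy condition $p^{-1}(W_{X}) = W_{V}$, since a $G$-arc $\gamma : E \to V$ lands in the center $W_{V}$ at the closed points of $E$ exactly when the induced arc $D \to X$ hits $W_{X}$ at the closed point of $D$.

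The core of the proof is the pointwise identity
\[
\bf_\fX \circ p_{\infty} \;-\; \bj_{p} \;=\; \bf_\fV,
\]
which holds almost everywhere on $J_{\infty}^{G}\fV$. This is proved exactly as in Proposition \ref{prop:crepant equal stringy}: choose $r \in \ZZ_{>0}$ with $rK_\fX$ and $rK_\fV$ Cartier; the crepancy hypothesis $K_\fV = p^{*}K_\fX$ gives a natural isomorphism
\[
p^{*}\omega_{X}^{[r]}(r\Delta_{X}) \;\cong\; \omega_{V}^{[r]}(r\Delta_{V})
\]
on $V_{\reg}$, and combining this with the standard conormal sequence $p^{*}\Omega_{X/D} \to \Omega_{V/D} \to \Omega_{V/X} \to 0$ (whose top exterior power relates the fractional ideals $I_\fX^{r}$, $I_\fV^{r}$ and $\Jac_{p}$ through the $0$-th Fitting ideal) yields the desired equality of order functions after division by $r$.

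Substituting this identity into the integral collapses the exponent to $\bf_\fV + \bw_{V}$, so
\[
M_{\st}(\fX) \;=\; \int_{J_{\infty}^{G}\fV} \LL^{\bf_\fV + \bw_{V}}\, d\mu_{J_{\infty}^{G}V} \;=\; M_{\st}^{G}(\fV),
\]
which is the claim. The main obstacle is of course not in this derivation but in Conjecture \ref{conj:change-vars-linear} itself (which remains conjectural, since it presupposes both the existence of $\GCov(D)$ as a reasonable moduli space and an equivariant $G$-arc analogue of Sebag's change of variables theorem); among the verification steps, the only technical subtlety is handling the torsion in $(\bigwedge^{d}\Omega_{X/D})^{\otimes r}$ over the singular locus of $X$, which one dispenses with by observing that arcs hitting $X_{\sing}$ form a measure zero subset under the standing assumption that $p$ is étale in codimension one.
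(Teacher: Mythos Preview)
Your derivation is correct and matches the paper's approach exactly: the paper does not give a detailed argument but only the one-line remark that ``Arguments as in the proof of Proposition \ref{prop:crepant equal stringy} deduce the following conjecture from Conjecture \ref{conj:change-vars-linear},'' and you have spelled out precisely those arguments. One small correction: the assumption that $p$ is \'etale in codimension one is \emph{not} standing in this section (it appears only in Example \ref{ex: classical wild McKay}); here the crepancy of $p:\fV\to\fX$ is simply built into the choice of centered log structures, and the measure-zero property of arcs through $X_{\sing}$ is a general fact unrelated to that hypothesis.
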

We will next formulate a conjecture presented in a slightly different
way so that we will be able to generalize it to the non-linear case
easily. 
\begin{defn}
For $E\in\GCov(D)$, we define the \emph{$E$-parts }of $M_{\st}^{G}(\fV)$
and $M_{\st}(\fX)$ respectively by
\begin{align*}
M_{\st}^{G,E}(\fV): & =\int_{J_{\infty}^{G,E}\fV}\LL^{\bf_{\fV}+\bw_{V}}\, d\mu_{J_{\infty}^{G}\fV}\text{ and}\\
M_{\st}^{E}(\fX) & :=\int_{p_{\infty}(J_{\infty}^{G,E}\fV)}\LL^{\bf_{\fX}}\, d\mu_{J_{\infty}\fX}.
\end{align*}

\end{defn}
By the same reasoning as the one for the last conjecture, we would
have
\begin{equation}
M_{\st}^{G,E}(\fV)=M_{\st}^{E}(\fX).\label{eq:mckay E-part 1}
\end{equation}
On the other hand, from Conjecture \ref{conj: change of variables for untwisted },
we would have
\begin{equation}
M_{\st}^{E}(\fX)=M_{\st,C_{G}(H)}(\fV^{|F|}).\label{eq:mckay E-part 2}
\end{equation}

Let $\GCov(D)=\bigsqcup_{i=0}^{\infty}A_{i}$ be a conjectural stratification
with finite dimensional strata $A_{i}$ (see Remark \ref{rem: finite-dimensional strata}).
The author \cite{Yasuda:2013fk} conjectures also that each stratum
$A_{i}$ may not be of finite type over $k$, but the limit of a family
\[
X_{1}\xrightarrow{f_{1}}X_{2}\xrightarrow{f_{2}}\cdots
\]
such that $X_{j}$ are of finite type and $f_{i}$ are homeomorphisms.
We then define a \emph{constructible subset} of $\GCov(D)$ as a constructible
subset of $\bigsqcup_{i=0}^{n}A_{i}$ for some $n<\infty$, which
would be well-defined thanks to this conjecture. For a constructible
subset $C$ of $\GCov(D)$, its class $[C]$ in $\cR$ is well-defined.
Let $\tau$ denote the \emph{tautological motivic measure} on $\GCov(D)$
given by $\tau(C):=[C]$ for a constructible subset $C$. If a function
$\Phi:\GCov(D)\to\cR\cup\{\infty\}$ is constructible, that is, its
image is countable and all fibers $\Phi^{-1}(a)$, $a\in\cR$ are
constructible, then the integral $\int_{\GCov(D)}\Phi\, d\tau$ is
defined by
\[
\int_{\GCov(D)}\Phi\, d\tau:=\sum_{a\in\cR}\tau(\Phi^{-1}(a))\cdot a\in\cR\cup\{\infty\}.
\]
From Conjecture \ref{conj:wild mckay 1} and conjectural equations
(\ref{eq:mckay E-part 1}) and (\ref{eq:mckay E-part 2}), it seems
natural to expect
\[
M_{\st}^{G}(\fV)=\int_{\GCov(D)}M_{\st,C_{G}(H)}(\fV^{|F|})\, d\tau
\]
and hence: 
\begin{conjecture}[The motivic McKay correspondence for linear actions II]
\label{conj: wild McKay linear}We have
\[
M_{\st}(\fX)=\int_{\GCov(D)}M_{\st,C_{G}(H)}(\fV^{|F|})\, d\tau.
\]

\end{conjecture}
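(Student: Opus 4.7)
The plan is to derive the conjecture as a chain of four equalities already foreshadowed in the text: Conjecture \ref{conj:wild mckay 1}, a stratification of $J_\infty^{G}\fV$ by the type of $G$-cover, and the two $E$-part identifications (\ref{eq:mckay E-part 1}) and (\ref{eq:mckay E-part 2}). Concretely, I would write
\[
M_{\st}(\fX) = M_{\st}^{G}(\fV) = \int_{\GCov(D)} M_{\st}^{G,E}(\fV)\, d\tau = \int_{\GCov(D)} M_{\st}^{E}(\fX)\, d\tau = \int_{\GCov(D)} M_{\st,C_{G}(H)}(\fV^{|F|})\, d\tau,
\]
and verify each equality in turn under the standing conjectures.

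For the first equality, I would apply the change of variables formula of Conjecture \ref{conj:change-vars-linear} to $\Phi = \LL^{\bf_{\fX}}$ on $C = J_\infty\fX$. Since the morphisms $\fV \to \fX$ and (a fortiori) $\fV^{\left\langle F\right\rangle} \to \fX$ are crepant, the same calculation as in the proof of Proposition \ref{prop:crepant equal stringy} (comparing $r$th powers of top differentials and invoking the definition of the fractional ideal $I_{\fX}^r$) yields $\bf_{\fX}\circ p_\infty - \bj_p = \bf_{\fV}$ on the locus where $p_\infty$ is defined. Combined with the $+\bw_V$ correction in Conjecture \ref{conj:change-vars-linear} and the fact that $\bf_{\fV} = \ord\,\Delta$ on the smooth $\fV$, this rewrites $M_{\st}(\fX)$ as $M_{\st}^{G}(\fV)$. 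For the second equality, the partition $J_\infty^{G}\fV = \bigsqcup_{E\in\GCov(D)} J_\infty^{G,E}\fV$ is a fibration over $\GCov(D)$; assuming the stratified finite-dimensional structure of $\GCov(D)$ discussed in Remark \ref{rem: finite-dimensional strata}, a Fubini-type argument identifies the integral of $\LL^{\bf_{\fV}+\bw_V}$ over $J_\infty^{G}\fV$ with the $\tau$-integral of the fiberwise integrals $M_{\st}^{G,E}(\fV)$.

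For the third equality, I would repeat the argument of step one restricted to the stratum $J_\infty^{G,E}\fV$, i.e.\ apply Conjecture \ref{conj:change-vars-linear} with $C = p_\infty(J_\infty^{G,E}\fV)$; the bijection $J_\infty^{G}V\setminus J_\infty^{G}T \to J_\infty X \setminus J_\infty\bar T$ noted after the definition of $p_\infty$ ensures that the strata intersect the ramification locus in a measure-zero set, so the restricted equality follows. For the fourth equality, I would invoke Conjecture \ref{conj: change of variables for untwisted } with $\Phi = \LL^{\bf_{\fX}}$ and $A = p_\infty(J_\infty^{G,E}\fV)$; the crepancy of $\fV^{|F|} \to \fX$ (imposed by the choice of $\fV^{|F|}$) gives $\bf_{\fX}\circ p_{(\infty)}^{|F|} - \bj_{p^{|F|}} = \bf_{\fV^{|F|}}$, and the one-to-one correspondence of diagram (\ref{pentagon}) shows that $(p_{(\infty)}^{|F|})^{-1}(A)$ is $(J_\infty\fV^{|F|})/C_G(H)$ up to a measure-zero set. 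Combining these rewrites $M_{\st}^{E}(\fX)$ as $M_{\st,C_G(H)}(\fV^{|F|})$.

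The main obstacle is not any one of these four steps in isolation but the second one: making the Fubini decomposition over $\GCov(D)$ rigorous requires first proving that $\GCov(D)$ carries a genuine motivic measure $\tau$ in the wild case, and then establishing compatibility between $\tau$ and the motivic measure on $J_\infty^{G}\fV$, namely that the measure of a stable subset of $J_\infty^{G}\fV$ lying over a constructible $C\subset \GCov(D)$ factors as $[C]$ times the fiberwise contribution. In the tame case this reduces to the finite stratification of \cite{MR1905024}, but in the wild case both the existence of $\tau$ and this factorization remain part of the conjectural framework of Remark \ref{rem: finite-dimensional strata}; any complete proof will therefore either assume or construct this infinite-dimensional motivic integration theory, and then verify that the exceptional ramified loci and the measure-zero strata (where $p_\infty$, $u$, or $p^{|F|}$ fail to be nicely behaved) do not contribute to either side.
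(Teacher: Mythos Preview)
Your derivation follows exactly the paper's own heuristic: the paper likewise obtains Conjecture~\ref{conj: wild McKay linear} by chaining Conjecture~\ref{conj:wild mckay 1} with the $E$-part identities (\ref{eq:mckay E-part 1}) and (\ref{eq:mckay E-part 2}) and then summing over $\GCov(D)$, and it flags the same obstacle you do---namely that the Fubini-type decomposition over $\GCov(D)$ rests on the conjectural moduli-theoretic structure of Remark~\ref{rem: finite-dimensional strata}. There is nothing to add; your outline and the paper's coincide, and both treat the statement as a conjecture precisely because of the gap you identify.
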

This formulation of the McKay correspondence is what we will generalize
to the non-linear case. 

To make this conjecture more meaningful, it would be nice if we can
compute $M_{\st,C_{G}(H)}(\fV^{|F|})$ explicitly. For this purpose,
next we see how to determine the centered log structures $\fV^{\left\langle F\right\rangle }$
and $\fV^{|F|}$ from $\fV$. Let us write $\fV=(V,\Delta,W)$, $\fV^{\preuntwisting}=(V,\Delta^{\preuntwisting},W^{\preuntwisting})$
and $\fV^{|F|}=(V,\Delta^{|F|},W^{|F|})$. The centers $W^{\preuntwisting}$
and $W^{|F|}$ are simply determined by 
\[
W^{\preuntwisting}=u^{-1}(W)\text{ and }W^{|F|}=r(W^{\preuntwisting}).
\]
The boundaries $\Delta^{\preuntwisting}$ and $\Delta^{|F|}$ are
determined as follows:
\begin{lem}
\label{lem:Delta transform}Regarding $V_{0}^{\left\langle F\right\rangle }$
and $V_{0}^{|F|}$ prime divisors on $V^{\left\langle F\right\rangle }$
and $V^{|F|}$, we have
\begin{gather*}
\Delta^{\preuntwisting}=u^{*}\Delta-(\sharp H\cdot\bv_{V}(E)+d_{F/D})\cdot V_{0}^{\preuntwisting}\\
\Delta^{|F|}=\frac{1}{\sharp H}\cdot r_{*}u^{*}\Delta-\bv_{V}(E)\cdot V_{0}^{|F|}.
\end{gather*}
Here $d_{F/D}$ is the different exponent of $F/D$, characterized
by $\Omega_{F/D}\cong\cO_{F}/\fm_{F}^{d_{F/D}}$. \end{lem}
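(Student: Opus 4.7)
The plan is to use the defining crepancy conditions
\[
K_{V^{\preuntwisting}}+\Delta^{\preuntwisting}=u^{*}(K_{V}+\Delta)\quad\text{and}\quad K_{V^{\preuntwisting}}+\Delta^{\preuntwisting}=r^{*}(K_{V^{|F|}}+\Delta^{|F|})
\]
to solve for $\Delta^{\preuntwisting}$ and $\Delta^{|F|}$, reducing the problem to the computation of the relative canonical divisors $K_{V^{\preuntwisting}/V}$ and $K_{V^{\preuntwisting}/V^{|F|}}$. The key technical input is the standard fact that for a generically \'{e}tale morphism $f\colon Y\to X$ between regular $D$-varieties, the Fitting ideal $\Jac_{f}$ cuts out the relative canonical divisor $K_{Y/X}$ as an effective Weil divisor, which is essentially the identity underlying the change of variables formula invoked in the proof of Proposition \ref{prop:crepant equal stringy}.

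For the first formula, I would factor $u$ through $V\times_{D}F$ as in Lemma \ref{lem:explicit Jac}. The multiplicativity of Jacobian ideals under composition gives $\Jac_{u}=\Jac_{V^{\preuntwisting}/V\times_{D}F}\cdot u'^{*}\Jac_{V\times_{D}F/V}$. Lemma \ref{lem:explicit Jac} supplies the first factor as $\fm_{F}^{\sharp H\cdot\bv_{V}(E)}\cO_{V^{\preuntwisting}}$; since $V\times_{D}F\to V$ is the base change of $F\to D$, the second factor is the different $\mathfrak{d}_{F/D}=\fm_{F}^{d_{F/D}}$ extended to $V\times_{D}F$. Hence $\Jac_{u}=\fm_{F}^{\sharp H\bv_{V}(E)+d_{F/D}}\cO_{V^{\preuntwisting}}$. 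Since $\fm_{F}\cO_{V^{\preuntwisting}}$ is the principal ideal cutting out the reduced divisor $V_{0}^{\preuntwisting}$, this gives $K_{V^{\preuntwisting}/V}=(\sharp H\bv_{V}(E)+d_{F/D})V_{0}^{\preuntwisting}$, and substituting into the crepancy equation for $u$ yields the first formula.

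For the second formula, I would use the identification $M^{\preuntwisting}=M^{|F|}\otimes_{\cO_{D}}\cO_{F}$ to recognize $V^{\preuntwisting}=V^{|F|}\times_{D}F$, so that $r$ is the base change of $V^{|F|}\to D$ along $F\to D$. Consequently $\Jac_{r}=\fm_{F}^{d_{F/D}}\cO_{V^{\preuntwisting}}$ and $K_{V^{\preuntwisting}/V^{|F|}}=d_{F/D}V_{0}^{\preuntwisting}$. Combining the crepancy equation for $r$ with the already-derived formula for $\Delta^{\preuntwisting}$ yields
\[
r^{*}\Delta^{|F|}=u^{*}\Delta-\sharp H\cdot\bv_{V}(E)\cdot V_{0}^{\preuntwisting}.
\]
Applying $r_{*}$ and using that $r$ is finite of degree $\sharp H$ (so $r_{*}r^{*}=\sharp H\cdot\mathrm{id}$ on divisors), together with the pullback identity $r^{*}V_{0}^{|F|}=\sharp H\cdot V_{0}^{\preuntwisting}$ (coming from $\pi_{D}=\pi_{F}^{\sharp H}$) and the resulting $r_{*}V_{0}^{\preuntwisting}=V_{0}^{|F|}$, then dividing by $\sharp H$, produces the second formula.

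The main point requiring care is the identification of $\mathrm{div}(\Jac_{f})$ with the relative canonical divisor for the morphisms $u$ and $r$, in view of the fact that $V^{\preuntwisting}$ is regular but not smooth over $D$ along its special fiber (the ramification index of $F/D$ equals $\sharp H$ while the residue extension is trivial). Both $V$ and $V^{\preuntwisting}$ are nevertheless regular, the morphisms $u$ and $r$ are finite and generically \'{e}tale, and the canonical sheaves $\omega_{V/D}$ and $\omega_{V^{\preuntwisting}/D}$ are invertible as in the convention adopted after the definition of crepant morphism. The manipulation with modulo-torsion top forms that appears in the proof of Proposition \ref{prop:crepant equal stringy} then applies verbatim to translate the Jacobian ideal computation into the desired divisor equality.
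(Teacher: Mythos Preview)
Your proof is correct and follows essentially the same approach as the paper's: both factor $u$ through $V\times_{D}F$, invoke Lemma~\ref{lem:explicit Jac} for the first factor and the different of $F/D$ for the second, and then use $K_{V^{\preuntwisting}/V^{|F|}}=d_{F/D}\cdot V_{0}^{\preuntwisting}$ for the second formula. The only cosmetic difference is that for the second equality the paper \emph{verifies} the stated formula by pulling back along $r$ (using $r^{*}\bigl(\tfrac{1}{\sharp H}r_{*}u^{*}\Delta\bigr)=u^{*}\Delta$, which holds because $u^{*}\Delta$ is $H$-invariant), whereas you \emph{derive} it by pushing forward along $r_{*}$ and using $r_{*}r^{*}=\sharp H\cdot\mathrm{id}$; these are equivalent manipulations with the same content.
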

\begin{proof}
For the first equality, we have 
\begin{align*}
 & u^{*}(K_{V}+\Delta)\\
 & =K_{V^{\preuntwisting}}-K_{V^{\preuntwisting}/V}+u^{*}\Delta\\
 & =K_{V^{\preuntwisting}}-K_{V^{\preuntwisting}/V\times_{D}F}-(u')^{*}K_{V\times_{D}F/V}+u^{*}\Delta.
\end{align*}
Here $K_{V^{\left\langle F\right\rangle }}$ is the canonical divisor
of $V^{\left\langle F\right\rangle }$ as a $D$-variety rather than
a $F$-variety and $u'$ denotes the natural morphism $V^{\preuntwisting}\to V\times_{D}F$.
From Lemma \ref{lem:explicit Jac}, 
\[
K_{V^{\preuntwisting}/V\times_{D}F}=\sharp H\cdot\bv_{V}(E)\cdot V_{0}^{\preuntwisting}.
\]
Since $(u')^{*}K_{V\times_{D}F/V}$ is the pull-back of $K_{F/D}$,
we have
\[
(u')^{*}K_{V\times_{D}F/V}=d_{F/D}\cdot V_{0}^{\preuntwisting}.
\]
These equalities show the first equality of the lemma. 

The second one follows from 
\begin{align*}
 & r^{*}(K_{V^{|F|}}+\frac{1}{\sharp H}\cdot r_{*}u^{*}\Delta-\bv(E)\cdot V_{0}^{|F|})\\
 & =K_{V^{\preuntwisting}}-K_{V^{\preuntwisting}/V^{|F|}}+u^{*}\Delta-\sharp H\cdot\bv_{V}(E)\cdot V_{0}^{\preuntwisting}\\
 & =K_{V^{\preuntwisting}}+u^{*}\Delta-(\sharp H\cdot\bv_{V}(E)+d_{F/E})\cdot V_{0}^{\preuntwisting}\\
 & =K_{V^{\left\langle F\right\rangle }}+\Delta^{\left\langle F\right\rangle }.
\end{align*}
\end{proof}
\begin{example}
\label{ex: classical wild McKay}Suppose that $\Delta=0$ and $W=\{o\}$
with $o\in V_{0}$ the origin. Then $\Delta^{|F|}=-\bv_{V}(E)\cdot V_{0}^{|F|}$
and $W^{|F|}\cong\AA_{k}^{\codim((V_{0})^{H},V_{0})}$. Hence
\[
M_{\st}^{G,E}(\fV)=M_{\st,C_{G}(H)}(\fV^{|F|})=\LL^{\bw_{V}(E)}.
\]
Conjecture \ref{conj: wild McKay linear} is reduced to the form,
\begin{equation}
M_{\st}(\fX)=\int_{\GCov(D)}\LL^{\bw_{V}}\, d\tau.\label{eq:wild McKay old form}
\end{equation}
If $p:V\to X$ is \'{e}tale in codimension one and if we denote $p(o)$
again by $o$, then $M_{\st}(\fX)=M_{\st}(X)_{o}$ and the last equality
is exactly what was conjectured in \cite{Yasuda:2013fk}. \end{example}
\begin{rem}
If $\sharp G$ is prime to the characteristic of $k$, then $\GCov(D)$
is identified with the set of conjugacy classes of $G$, denoted by
$\Conj G$. Equality (\ref{eq:wild McKay old form}) in the last example
is then written as
\[
M_{\st}(\fX)=\sum_{[g]\in\Conj G}\LL^{\bw_{V}(g)}.
\]
Expressing the weights $\bw_{V}(g)$ in terms of eigenvalues, we recover
results by Batyrev \cite{MR1677693}, and Denef and Loeser \cite{MR1905024}.
\end{rem}

\section{The McKay correspondence for non-linear actions\label{sec:The-McKay-correspondence-non-linear}}

In this section, we generalize Conjecture \ref{conj: wild McKay linear}
to the non-linear case. It is rather easy, once we have formulated
the conjecture as it is. 

Let us consider an affine $D$-variety $\sv=\Spec\cO_{\sv}$ endowed
with a faithful $G$-action. We fix a $G$-equivarint (locally closed)
immersion 
\[
\sv\hookrightarrow V
\]
into an affine space $V\cong\AA_{D}^{d}$ endowed with a linear $G$-action.
Identifying $G$-arcs of $\sv$ with those of $V$ factoring through
$\sv$, we regard $J_{\infty}^{G}\sv$ as a subset of $J_{\infty}^{G}V$.
\begin{rem}
\label{rem: linear embeding (permutation)}Such an immersion always
exists. Indeed, let $f_{1},\dots,f_{n}$ be generators of $\cO_{\sv}$
as an $\cO_{D}$-algebra, let $A:=\bigcup_{i}f_{i}G$, the union of
their orbits, and let $\cO_{D}[x_{f}\mid f\in A]$ be the polynomial
ring with variables $x_{f}$, $f\in A$ over $\cO_{D}$. The ring
has a natural $G$-action by permutations of variables. The $\cO_{D}$-algebra
homomorphism 
\[
\cO_{D}[x_{f}\mid f\in A]\to\cO_{\sv},\, x_{f}\mapsto f
\]
defines a desired immersion. Moreover this construction gives a \emph{closed}
immersion into $V$ on which $G$ acts by \emph{permutations}. In
this case, our weigh function $\bw_{V}$ is closely related to the
Artin and Swan conductors \cite{Wood-Yasuda-I}, although we do not
use this fact in this paper.\end{rem}
\begin{defn}
For $E\in\GCov(D)$ with a connected component $F$, we define the
\emph{pre-untwisting variety }of $\sv$, denoted by $\sv^{\preuntwisting}$,
as the irreducible component of $r^{-1}(\sv)\subset V^{\preuntwisting}$
which dominates $\sv$. We then define the \emph{untwisting variety},
denoted by $\sv^{|F|}$, as the image of $\sv^{\left\langle F\right\rangle }$
in $V^{|F|}$. We also define the \emph{normalized pre-untwisting}
$\sv^{\preuntwisting,\nu}$ \emph{and untwisting varieties} $\sv^{|F|,\nu}$
to be the normalizations of $\sv^{\preuntwisting}$ and $\sv^{|F|}$
respectively. 
\end{defn}
Let $\sx:=\sv/G$. The following diagram shows natural morphisms of
relevant varieties and symbols $t$, $s$ and $q$ denote morphisms
as indicated:

\begin{equation}
\xymatrix{ & \sv^{\preuntwisting,\nu}\ar[d]\ar[dr]^{s}\ar[ddl]_{t}\\
 & \sv^{\preuntwisting}\ar[dr]\ar[dl] & \sv^{|F|,\nu}\ar[d]\\
\sv\ar[dr]_{q} &  & \sv^{|F|}\ar[dl]\\
 & \sx
}
\label{eq: diagram s and t}
\end{equation}
The one-to-one correspondence obtained in the last section
\[
J_{\infty}^{G,E}V\leftrightarrow(J_{\infty}V^{|F|})/C_{G}(H)
\]
induces a one-to-one correspondence
\[
J_{\infty}^{G,E}\sv\leftrightarrow(J_{\infty}\sv^{|F|})/C_{G}(H).
\]
We obtain the following diagram:
\[
\xymatrix{ &  & (J_{\infty}\sv^{|F|,\nu})/C_{G}(H)\ar[d]\\
J_{\infty}^{G,E}\sv\ar@{<->}[rr]^{\text{1-to-1}}\ar[dr] &  & (J_{\infty}\sv^{|F|})/C_{G}(H)\ar[dl]\\
 & J_{\infty}\sx
}
\]
If we put $J_{\infty}^{E}\sx$ to be the image of $J_{\infty}^{G,E}\sv$
in $J_{\infty}\sx$, then we can naturally expect that $J_{\infty}^{E}\sx$
coincides with the images of $J_{\infty}\sv^{|F|}$ and $J_{\infty}\sv^{|F|,\nu}$
modulo measure zero subsets.  

From now on, we suppose that $\sv$ is normal. Let $\fv$, $\fv^{\left\langle F\right\rangle ,\nu}$,
$\fv^{|F|,\nu}$ and $\fx$ be centered log structures on $\sv$,
$\sv^{\left\langle F\right\rangle ,\nu}$ and $\sv^{|F|,\nu}$ respectively
such that the morphisms 
\[
\xymatrix{ & \fv^{\left\langle F\right\rangle ,\nu}\ar[dr]\ar[dl]\\
\fv\ar[dr] &  & \fv^{|F|,\nu}\ar[dl]\\
 & \fx
}
\]
are all crepant. The centered log $D$-varieties $\fv$ and $\fv^{|F|,\nu}$
are $G$- and $C_{G}(H)$-equivariant respectively. If we define the
$E$-part $M_{\st}^{E}(\fx)$ of $M_{\st}(\fx)$, we can expect 
\[
M_{\st}^{E}(\fx)=M_{\st,C_{G}(H)}(\fv^{|F|,\nu})
\]
similarly to the linear case. For the equality is a slight generalization
of Conjecture \ref{conj: crepant equal stringy} and would follow
from the change of variables formula generalized along the line of
\cite{MR1905024}, applied to the almost bijection
\[
J_{\infty}\fv^{|F|,\nu}\to J_{\infty}^{E}\fx.
\]
It is then natural to expect:
\begin{conjecture}[The McKay correspondence for non-linear actions]
\label{conj: non-linear McKay-1}We have 
\[
M_{\st}(\fx)=\int_{\GCov(D)}M_{\st,C_{G}(H)}(\fv^{|F|,\nu})\, d\tau.
\]
\end{conjecture}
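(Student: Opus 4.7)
The plan is to imitate the derivation of Conjecture \ref{conj: wild McKay linear} in the linear case, using the embedding $\sv \hookrightarrow V$ only as a device to transport the untwisting correspondence of Section \ref{sec:The-untwisting-technique} to the subvariety. First I would decompose the total stringy motif according to the stratification of the source of $p_\infty \colon J_\infty^G \sv \to J_\infty \sx$ by $\GCov(D)$. Modulo a measure-zero set (the arcs that avoid the ramification locus), the arc space $J_\infty \fx$ is covered by the images $J_\infty^E \fx := p_\infty(J_\infty^{G,E}\fv)$, and these images, for distinct $E$, meet in measure zero. This formally yields
\[
M_{\st}(\fx) = \int_{\GCov(D)} M_{\st}^E(\fx) \, d\tau,
\]
where $M_{\st}^E(\fx)$ is defined by restricting the integrand $\LL^{\bf_{\fx}}$ to $J_\infty^E \fx$. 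Of course this step requires the conjectural existence of $\GCov(D)$ as a reasonable moduli space (Remark \ref{rem: finite-dimensional strata}); I would assume it as in the linear formulation.

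Next I would prove the pointwise identity $M_{\st}^E(\fx) = M_{\st, C_G(H)}(\fv^{|F|,\nu})$ for each fixed $E$ with connected component $F$ and stabilizer $H$. Restricting the ambient untwisting correspondence (\ref{pentagon}) to the invariant subvariety $\sv$, I get the almost bijection
\[
(J_\infty \sv^{|F|})/C_G(H) \longleftrightarrow J_\infty^{G,E}\sv \xrightarrow{\ p_\infty\ } J_\infty^E \fx,
\]
which factors through $(J_\infty \sv^{|F|,\nu})/C_G(H)$ since normalization induces a bijection on arc spaces modulo measure zero. The composite $J_\infty \sv^{|F|,\nu} \to J_\infty \sx$ is induced by the crepant, generically finite morphism assembled from $s$ and $q$ in diagram (\ref{eq: diagram s and t}). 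Applying a suitably generalized change of variables formula along this morphism (the analogue of Proposition \ref{prop:crepant equal stringy} but in the $C_G(H)$-equivariant setting, with the Jacobian term absorbed by crepancy) would give
\[
\int_{J_\infty^E \fx} \LL^{\bf_{\fx}} \, d\mu_{J_\infty \fx} = \int_{(J_\infty \sv^{|F|,\nu})/C_G(H)} \LL^{\bf_{\fv^{|F|,\nu}}} \, d\mu_{(J_\infty \sv^{|F|,\nu})/C_G(H)},
\]
which is exactly $M_{\st}^E(\fx) = M_{\st, C_G(H)}(\fv^{|F|,\nu})$. Combining with the first step yields the desired equality.

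The main obstacle is the change of variables formula invoked in the second step. In the linear case (Conjecture \ref{conj:change-vars-linear}) the formula was derived with the help of the explicit computation of Jacobian ideals in Lemma \ref{lem:explicit Jac}, together with the presentation of fibers of $p_n$ as affine-space quotients (Conjecture \ref{conj: the key lemma}). In the non-linear case, $\sv^{|F|,\nu}$ is only normal, not necessarily smooth or even $\QQ$-Gorenstein in a controllable way, so one needs both the Sebag-type change of variables for singular targets \cite{MR2075915} \emph{and} an equivariant version that handles the $C_G(H)$-quotient, as well as a precise control on the boundary correction analogous to Lemma \ref{lem:Delta transform} but on $\sv^{|F|,\nu}$ rather than the smooth $V^{|F|}$. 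The crepancy of $\sv^{|F|,\nu} \to \sx$ is what makes the two integrands match, so the work lies in checking that the various Jacobian and boundary contributions telescope correctly under normalization.

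Finally, I would sanity-check the formula in the linear specialization, where $\sv^{|F|,\nu} = V^{|F|}$ is smooth and $\fv^{|F|,\nu}$ has boundary given by Lemma \ref{lem:Delta transform}, recovering Conjecture \ref{conj: wild McKay linear} exactly, and in the tame case, where $\GCov(D)$ collapses to $\Conj{G}$ and the integral becomes a finite sum over conjugacy classes. Both checks are expected to be formal once the change of variables formula above is in place.
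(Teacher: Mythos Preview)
Your proposal mirrors the paper's own heuristic derivation essentially step for step: decompose $M_{\st}(\fx)$ over $\GCov(D)$ into $E$-parts, then for each $E$ identify $M_{\st}^{E}(\fx)$ with $M_{\st,C_{G}(H)}(\fv^{|F|,\nu})$ via the almost bijection $J_{\infty}\fv^{|F|,\nu}\to J_{\infty}^{E}\fx$ and a generalized change of variables formula along the line of \cite{MR1905024}. Since the statement is a conjecture, the paper gives no proof beyond this heuristic, and you have correctly flagged the same conditional inputs (the conjectural moduli $\GCov(D)$ and the equivariant change of variables for the possibly singular $\sv^{|F|,\nu}$) that the paper itself leaves open.
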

\begin{defn}
We define the $E$\emph{-part of the orbifold stringy motif }of $\fv$
as
\begin{align*}
M_{\st}^{G,E}(\fv) & :=M_{\st,C_{G}(H)}(\fv^{|F|,\nu})
\end{align*}
and the \emph{orbifold stringy motif }of $\fv$ as

\[
M_{\st}^{G}(\fv):=\int_{\GCov(D)}M_{\st}^{G,E}(\fv)\, d\tau.
\]

\end{defn}
With this definition, the last conjecture simply says
\[
M_{\st}(\fx)=M_{\st}^{G}(\fv).
\]
 
\begin{rem}
The reader may wonder why we do not define $M_{\st}^{G}(\fv)$ as
a motivic integral on $J_{\infty}^{G}\fv$, which appears more natural.
It is because the author does not know whether one can define a motivic
measure on $J_{\infty}^{G}\fv$. For, he does not know how to compute
dimensions of fibers of
\[
\pi_{n}(\Hom_{F}^{H}(F,\sv^{\left\langle F\right\rangle ,\nu}))/C_{G}(H)\to\pi_{n}(J_{\infty}^{G}\sv).
\]
Knowing it was, in the linear case, a key in formulating the change
of variables formula (Conjecture \ref{conj:change-vars-linear}) and
determining the integrand $\LL^{\bf_{\fV}+\bw_{V}}$ in the definition
of $M_{\st}^{G}(\fV)$. 
\end{rem}

\section{Computing boundaries of untwisting varieties\label{sec:Computing-boundaries}}

To compute examples of the wild McKay correspondence, we need to determine
centered log varieties $\fv^{|F|,\nu}$. It is easy to determine the
center. In this section, supposing $\sv$ and $\sv^{|F|}$ are both
normal and complete intersections in $V$ and $V^{|F|}$ respectively,
we compute the boundary of $\fv^{|F|}$.

Let
\[
t:\sv^{\left\langle F\right\rangle }\to\sv\text{ and }s:\sv^{\left\langle F\right\rangle }\to\sv^{|F|}
\]
be the natural morphisms, although they are different from the morphisms
denoted by the same symbols in diagram (\ref{eq: diagram s and t})
unless $\sv^{\left\langle F\right\rangle }$ is also normal. The subvariety
$\sv^{\left\langle F\right\rangle }\subset V^{\left\langle F\right\rangle }$
is a complete intersection. To see this, first note that if $s^{-1}(\sv^{|F|})$
denotes the scheme-theoretic preimage, then $\left(s^{-1}(\sv^{|F|})\right)_{\red}=\sv^{\left\langle F\right\rangle }$.
The subscheme $s^{-1}(\sv^{|F|})\subset V^{\left\langle F\right\rangle }$
is a complete intersection, hence Cohen-Macaulay, and generically
reduced. From \cite[THeorem 18.15]{MR1322960}, $s^{-1}(\sv^{|F|})$
is actually reduced and 
\[
s^{-1}(\sv^{|F|})=\sv^{\left\langle F\right\rangle }.
\]

In general, for a complete intersection subvariety $Y\subset X$,
its \emph{conormal sheaf} $\cC_{Y/X}$ is defined as $I_{Y}/I_{Y}^{2}$
with $I_{Y}\subset\cO_{X}$ the defining ideal sheaf of $Y$. We put
\[
\det\cC_{Y/X}:=\bigwedge^{\codim(Y,X)}\cC_{Y/X}.
\]
There exists a unique effective $H$-stable Cartier divisor $A_{F}$
on $\sv^{\left\langle F\right\rangle }$ such that 
\[
t^{*}\left(\det\cC_{\sv/V}\right)=\left(\det\cC_{\sv^{\left\langle F\right\rangle }/V^{\left\langle F\right\rangle }}\right)(-A_{F}).
\]

\begin{prop}
\label{prop: complete intersection}Let $\delta$ and $\delta^{|F|}$
be the boundaries of $\fv$ and $\fv^{|F|}$ respectively and $C:=V_{0}^{|F|}|_{\sv^{|F|}}$,
the restriction of the prime divisor $V_{0}^{|F|}$ on $V^{|F|}$
to $\sv^{|F|}$. Then 
\[
\delta^{|F|}=\frac{1}{\sharp H}\cdot s_{*}\left(t^{*}\delta+A_{F}\right)-\bv_{V}(E)\cdot C.
\]
\end{prop}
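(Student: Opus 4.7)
The plan is to adapt the strategy of Lemma \ref{lem:Delta transform} to the complete-intersection setting: first compute the boundary $\delta^{\langle F\rangle}$ on $\sv^{\langle F\rangle}$ by pulling back along $t$ and applying adjunction, and then descend to $\sv^{|F|}$ by pushing forward along $s$. Throughout, I work on the smooth/regular loci of $\sv^{\langle F\rangle}$ and $\sv^{|F|}$ (which are dense since these varieties are generically reduced complete intersections), and extend divisor equalities using normality of $\sv$ and $\sv^{|F|}$.

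The key inputs are: (i) the adjunction isomorphism $\omega_{\sv} \cong (\omega_V \otimes \det\cN_{\sv/V})|_{\sv}$ for complete intersections, and its analog for $\sv^{\langle F\rangle} \subset V^{\langle F\rangle}$; (ii) the already-noted computation $K_{V^{\langle F\rangle}} = u^*K_V + (\sharp H\cdot\bv_V(E) + d_{F/D})\,V_0^{\langle F\rangle}$ from the proof of Lemma \ref{lem:Delta transform}; and (iii) the defining relation $t^*(\det\cC_{\sv/V}) = (\det\cC_{\sv^{\langle F\rangle}/V^{\langle F\rangle}})(-A_F)$, which rewrites as a linear equivalence $\det\cN_{\sv^{\langle F\rangle}/V^{\langle F\rangle}} \sim t^*\det\cN_{\sv/V} - A_F$. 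Combining (i)--(iii), restricting to $\sv^{\langle F\rangle}$, and comparing with the crepancy relation $K_{\sv^{\langle F\rangle,\nu}} + \delta^{\langle F\rangle,\nu} = t^*(K_\sv+\delta)$ for the morphism $\fv^{\langle F\rangle,\nu}\to\fv$ yields
\[
\delta^{\langle F\rangle} = t^*\delta + A_F - (\sharp H\cdot\bv_V(E)+d_{F/D})\cdot V_0^{\langle F\rangle}|_{\sv^{\langle F\rangle}}.
\]

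Next, the paragraph preceding the proposition established that $s^{-1}(\sv^{|F|})$ is reduced and equals $\sv^{\langle F\rangle}$, so $s: \sv^{\langle F\rangle}\to\sv^{|F|}$ is the base change of $\sv^{|F|}$ along $F\to D$. Since $F/D$ is totally ramified of degree $\sharp H$ with different exponent $d_{F/D}$, we get $K_{\sv^{\langle F\rangle}/\sv^{|F|}} = d_{F/D}\cdot V_0^{\langle F\rangle}|_{\sv^{\langle F\rangle}}$, so crepancy of $\fv^{\langle F\rangle,\nu}\to\fv^{|F|,\nu}$ gives
\[
s^*\delta^{|F|} = \delta^{\langle F\rangle} + d_{F/D}\cdot V_0^{\langle F\rangle}|_{\sv^{\langle F\rangle}} = t^*\delta + A_F - \sharp H\cdot\bv_V(E)\cdot V_0^{\langle F\rangle}|_{\sv^{\langle F\rangle}}.
\]
Finally, applying $s_*$ and using that $s$ is finite flat of degree $\sharp H$ (so $s_*s^* = \sharp H$ on divisors) and that $s$ restricts to an isomorphism $V_0^{\langle F\rangle}\to V_0^{|F|}$ on special fibers (since $F_0\cong\Spec k$), hence $s_*(V_0^{\langle F\rangle}|_{\sv^{\langle F\rangle}}) = C$, yields the claimed formula.

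The main technical obstacle is bookkeeping around normality: the preuntwisting variety $\sv^{\langle F\rangle}$ need not be normal, so strictly speaking the crepancy formulas live on $\sv^{\langle F\rangle,\nu}$. However, since $\sv^{\langle F\rangle}$ is a complete intersection in a regular scheme it is Gorenstein, so its canonical divisor is well-defined; moreover all divisorial identities in the argument may be checked on the smooth locus and then propagated. A second minor point is the justification that $A_F$ is $H$-stable and Cartier, which follows because $t$ is $H$-equivariant and the conormal sheaves on both sides are locally free of the same rank on the relevant locus, so their pullback/comparison defines a Cartier divisor automatically.
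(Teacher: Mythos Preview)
Your proof is correct and follows essentially the same approach as the paper: both arguments pull back to $\sv^{\langle F\rangle}$, use adjunction for complete intersections together with the definition of $A_F$, and invoke the computation of $K_{V^{\langle F\rangle}/V}$ from Lemma~\ref{lem:Delta transform}. The only organizational difference is that the paper sets $\epsilon^{|F|}$ equal to the right-hand side and verifies $t^*(K_{\sv}+\delta)=s^*(K_{\sv^{|F|}}+\epsilon^{|F|})$ on $\sv^{\langle F\rangle}$ (using injectivity of $s^*$), whereas you compute $\delta^{\langle F\rangle}$ first and then recover $\delta^{|F|}$ by applying $s_*$ and using $s_*s^*=\sharp H$; these are equivalent.
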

\begin{proof}
Let $\epsilon^{|F|}$ be the right side of the equality. As in the
proof of Lemma \ref{lem:Delta transform}, it suffices to show that
the pull-backs of divisors $K_{\sv}+\delta$ and $K_{\sv^{|F|}}+\epsilon^{|F|}$
to $\sv^{\left\langle F\right\rangle }$ coincide. Since 
\[
s^{*}(\frac{1}{\sharp H}s_{*}t^{*}\delta)=t^{*}\delta,
\]
we may suppose $\delta=0$ and hence 
\[
\epsilon^{|F|}=\frac{1}{\sharp H}\cdot s_{*}A_{F}-\bv_{V}(E)C.
\]
By abuse of notation, identifying a divisor corresponding to an invertible
sheaf, from the adjunction formula, we have 
\begin{align*}
t^{*}K_{\sv} & =t^{*}\left(K_{V}|_{\sv}-\det\cC_{\sv/V}\right)\\
 & =\left(u^{*}K_{V}\right)|_{\sv^{\left\langle F\right\rangle }}-\det\cC_{\sv^{\left\langle F\right\rangle }/V^{\left\langle F\right\rangle }}+A_{F}.
\end{align*}
On the other hand, since $s^{*}(\det\cC_{\sv^{|F|}/V^{|F|}})=\det\cC_{\sv^{\left\langle F\right\rangle }/V^{\left\langle F\right\rangle }}$,
\begin{align*}
s^{*}(K_{\sv^{|F|}}+\epsilon^{|F|}) & =s^{*}(K_{V^{|F|}}|_{\sv^{|F|}}-\det\cC_{\sv^{|F|}/V^{|F|}})-\sharp H\cdot\bv_{V}(E)V_{0}^{\left\langle F\right\rangle }|_{\sv^{\left\langle F\right\rangle }}+A_{F}\\
 & =\left(r^{*}K_{V^{|F|}}-\sharp H\cdot\bv_{V}(E)V_{0}^{\left\langle F\right\rangle }\right)|_{\sv^{\left\langle F\right\rangle }}-\det\cC_{\sv^{\left\langle F\right\rangle }/V^{\left\langle F\right\rangle }}+A_{F}.
\end{align*}
From Lemma \ref{lem:Delta transform}, 
\[
r^{*}K_{V^{|F|}}-\sharp H\cdot\bv_{V}(E)V_{0}^{\left\langle F\right\rangle }=K_{V^{\left\langle F\right\rangle }/D}-d_{F/D}V_{0}^{\left\langle F\right\rangle }-\sharp H\cdot\bv_{V}(E)V_{0}^{\left\langle F\right\rangle }=u^{*}K_{V},
\]
which shows the proposition. 
\end{proof}
It is handy to rewrite the proposition in the case of hypersurfaces
as follows.
\begin{cor}
\label{cor: hypersurface by an invariant polynomial}Suppose that
$\sv\subset V$ is a hypersurface defined by a polynomial $f\in\cO_{V}$
and write 
\[
u_{F}^{*}f=\pi_{F}^{b}\phi,
\]
where $\pi_{F}$ is a uniformizer of $\cO_{F}$, $b$ is an integer
$b\ge0$ and $\phi\in\cO_{V^{\left\langle F\right\rangle }}$ with
$\pi_{F}\nmid\phi$. Then, with the notation as above, we have
\[
\delta^{|F|}=\frac{1}{\sharp H}s_{*}t^{*}\delta+\left(\frac{b}{\sharp H}-\bv_{V}(E)\right)\cdot C.
\]
\end{cor}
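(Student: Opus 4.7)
The plan is to specialize Proposition~\ref{prop: complete intersection} to the hypersurface case by making its two ingredients explicit: the divisor $A_F$ comparing conormal bundles on $\sv^{\preuntwisting}$, and the push-forward $s_{*}(V_{0}^{\preuntwisting}|_{\sv^{\preuntwisting}})$.

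First I would identify $A_F$. Since $\sv=V(f)\subset V$ is a hypersurface, the conormal sheaf $\cC_{\sv/V}=(f)/(f)^{2}$ is an invertible $\cO_{\sv}$-module generated by the class of $f$, and $\det\cC_{\sv/V}$ is $\cC_{\sv/V}$ itself. By the same Cohen--Macaulay-plus-generically-reduced argument used in the proof of Proposition~\ref{prop: complete intersection}, the subvariety $\sv^{\preuntwisting}\subset V^{\preuntwisting}$ is cut out by (the irreducible factor of) $\phi$ dominating $\sv$, so $\cC_{\sv^{\preuntwisting}/V^{\preuntwisting}}$ is invertible with generator the class of $\phi$. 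The canonical pullback map
\[
t^{*}\cC_{\sv/V}\longrightarrow\cC_{\sv^{\preuntwisting}/V^{\preuntwisting}}
\]
sends the pulled-back generator $f$ to $u^{*}f\bmod(\phi)^{2}=\pi_F^{b}\phi\bmod(\phi)^{2}$; identifying both sides with $\cO_{\sv^{\preuntwisting}}$ via the chosen generators, this is multiplication by $\pi_F^{b}$. Since the divisor of $\pi_F$ on $V^{\preuntwisting}$ is exactly $V_{0}^{\preuntwisting}$, the image is the subsheaf $\det\cC_{\sv^{\preuntwisting}/V^{\preuntwisting}}(-b\cdot V_{0}^{\preuntwisting}|_{\sv^{\preuntwisting}})$, and by the defining equation of $A_F$ this gives $A_F=b\cdot V_{0}^{\preuntwisting}|_{\sv^{\preuntwisting}}$.

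Next I would compute $s_{*}(V_{0}^{\preuntwisting}|_{\sv^{\preuntwisting}})$. The morphism $r:V^{\preuntwisting}\to V^{|F|}$ is the quotient by $H$ coming from $\cO_{V^{\preuntwisting}}=\cO_{V^{|F|}}\otimes_{\cO_{D}}\cO_{F}$, and restricting to $\sv^{\preuntwisting}=\sv^{|F|}\times_{D}F$ gives a finite flat map $s:\sv^{\preuntwisting}\to\sv^{|F|}$ of degree $[F:D]=\sharp H$. Because $k$ is algebraically closed, the residue field of $F$ equals $k$, so $f_{F/D}=1$ and $e_{F/D}=\sharp H$; in particular $\pi_{D}$ and $\pi_F^{\sharp H}$ differ by a unit, giving $r^{*}V_{0}^{|F|}=\sharp H\cdot V_{0}^{\preuntwisting}$ and hence $s^{*}C=\sharp H\cdot V_{0}^{\preuntwisting}|_{\sv^{\preuntwisting}}$. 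The projection formula then yields
\[
s_{*}\bigl(V_{0}^{\preuntwisting}|_{\sv^{\preuntwisting}}\bigr)=\tfrac{1}{\sharp H}\,s_{*}s^{*}C=\tfrac{\deg s}{\sharp H}\cdot C=C.
\]

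Substituting these two identities into the conclusion of Proposition~\ref{prop: complete intersection},
\[
\delta^{|F|}=\tfrac{1}{\sharp H}s_{*}\bigl(t^{*}\delta+A_F\bigr)-\bv_{V}(E)\cdot C,
\]
immediately produces the claimed expression. The main delicate point, and the step I would write out most carefully, is the identification of the pullback of conormal line bundles with multiplication by $\pi_F^{b}$; here one must be careful that $\phi$ may be reducible and that $\sv^{\preuntwisting}$ is only the irreducible component of $V(\phi)$ dominating $\sv$, but passing to that component only alters the chosen generator of $\cC_{\sv^{\preuntwisting}/V^{\preuntwisting}}$ and not the exponent $b$ of $\pi_F$, so the formula $A_F=b\cdot V_{0}^{\preuntwisting}|_{\sv^{\preuntwisting}}$ survives unchanged.
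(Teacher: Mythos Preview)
Your proof is correct and follows exactly the same approach as the paper's: identify $A_{F}=b\cdot(V_{0}^{\preuntwisting}|_{\sv^{\preuntwisting}})$ and $s_{*}A_{F}=b\cdot C$, then substitute into Proposition~\ref{prop: complete intersection}. The paper's proof simply states these two identities without elaboration, whereas you have carefully unpacked why the conormal comparison gives multiplication by $\pi_{F}^{b}$ and why the push-forward produces $C$; your added caveat about the possible reducibility of $\phi$ is reasonable, though under the standing assumptions of Section~\ref{sec:Computing-boundaries} (both $\sv$ and $\sv^{|F|}$ normal complete intersections, with $\sv^{\preuntwisting}=s^{-1}(\sv^{|F|})$ scheme-theoretically) the extra factor is a unit on $\sv^{\preuntwisting}$ and the issue does not arise.
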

\begin{proof}
The corollary follows from 
\[
A_{F}=b\cdot(V_{0}^{\left\langle F\right\rangle }|_{\sv_{0}^{\left\langle F\right\rangle }})\text{ and }s_{*}A_{F}=b\cdot C.
\]
\end{proof}
\begin{rem}
In the corollary above, if $f$ is $G$-invariant, then $b$ is a
multiple of $\sharp H$ and hence $b/\sharp H$ is an integer. 
\end{rem}

\section{A tame singular example\label{sec:A-tame-example}}

In this section, we verify Conjecture \ref{conj: non-linear McKay-1}
in an example from the tame case, where $\sv$ is not regular. 

Suppose that $k$ has characteristic $\ne2$. Let $D:=\Spec k[[\pi]]$,
$V:=\Spec k[[\pi]][x,y,z]$ and $\sv=\Spec k[[\pi]][x,y,z]/(xz-y^{2})$,
the trivial family of the $A_{1}$-singularity over $\Spec k[[\pi]]$.
We suppose that $G=\ZZ/2\ZZ=\{1,g\}$ acts on $V$ by
\[
xg=-x,\, yg=y,\, zg=-z.
\]
The subvariety $\sv$ is stable under the $G$-action and the quotient
variety $\sx=\sv/G$ can be embedded into $\AA_{k[[\pi]]}^{3}=\Spec k[[\pi]][u,v,w]$
and gives the hypersurface defined by the equation $uv-w^{4}=0$.
Thus $\sx$ is the trivial family of the $A_{3}$-singularity over
$\Spec k[[\pi]]$. 

Since the morphism $\sv\to\sx$ is \'{e}tale in codimension one,
it is crepant (with the identification (\ref{eq:identifications center log})).
Let $\tilde{\sx}_{0}\to\sx_{0}$ be the minimal resolution and $\tilde{\sx}:=\sx\otimes_{k}k[[\pi]]$.
The natural morphism $\tilde{\sx}\to\sx$ is crepant. From Proposition
\ref{prop: ordinary explicit formula}, 
\[
M_{\st}(\sx)=M_{\st}(\tilde{\sx})=[\tilde{\sx}_{0}]=\LL^{2}+3\LL.
\]

Next we will compute $M_{\st}^{G}(\sv)$ and verify that it coincides
with $M_{\st}(\sx)$. There are exactly two $G$-covers of $D$ up
to isomorphism: the trivial one $E_{1}=D\sqcup D\to D$ and the nontrivial
one 
\[
E_{2}=\Spec k[[\pi^{1/2}]]\to D=\Spec k[[\pi]],
\]
and hence
\[
M_{\st}^{G}(\sv)=M_{\st}^{G,E_{1}}(\sv)+M_{\st}^{G,E_{2}}(\sv).
\]
As for the first term $M_{\st}^{G,E_{1}}(\sv)$, we have $\sv^{|D|}=\sv$.
Consider the minimal resolution $\tilde{\sv}_{0}\to\sv_{0}$ and put
$\tilde{\sv}:=\tilde{\sv}_{0}\otimes_{k}k[[\pi]]$. Then the morphism
$\tilde{\sv}\to\sv$ is crepant. Since the $G$-action on the exceptional
locus is trivial, from Proposition \ref{prop:equivariant explicit formula},
\[
M_{\st}^{G,E_{1}}(\sv)=M_{\st,G}(\tilde{\sv})=\LL^{2}+\LL.
\]
Next we compute $M_{\st}^{G,E_{2}}(\sv)$. For $F=E_{2}$, the tuning
module $\Xi_{F}$ has a basis
\begin{equation}
\pi^{1/2}x^{*},y^{*},\pi^{1/2}z^{*},\label{eq:basis xi tame example}
\end{equation}
with $x^{*},y^{*},z^{*}$ the dual basis of $ $$x$, $y$, $z$.
If we denote the dual basis of (\ref{eq:basis xi tame example}) by
$\rx$, $\ry$, $\rz$, then we can write $u^{*}$ as
\begin{align*}
u^{*}:k[[\pi]][x,y,z] & \to k[[\pi^{1/2}]][\rx,\ry,\rz]\\
x & \mapsto\pi^{1/2}\rx\\
y & \mapsto\ry\\
z & \mapsto\pi^{1/2}\rz.
\end{align*}
We see that $\sv^{|F|}$ is given by
\[
\pi\rx\rz-\ry^{2}=0.
\]
Since the non-regular locus of $\sv^{|F|}$ has dimension one, the
variety $\sv^{|F|}$ is normal. From Corollary \ref{cor: hypersurface by an invariant polynomial},
the boundary $\delta^{|F|}$ of $\fv^{|F|}$ is given by
\[
\delta^{|F|}=-V_{0}^{|F|}|_{\sv^{|F|}}.
\]
Hence 
\[
M_{\st}^{G,E_{2}}(\sv)=M_{\st,G}(\fv^{|F|})=M_{\st,G}(\sv^{|F|})\LL^{-1}.
\]
The $G$-action on $\sv^{|F|}$ is given by 
\[
\rx g=-\rx,\,\ry g=\ry,\,\rz g=-\rz.
\]
The non-regular locus of $\sv^{|F|}$ consists of three irreducible
components
\[
C_{1}=\{\rx=\ry=\rz=0\},\, C_{2}=\{\rx=\ry=\pi=0\},\, C_{3}=\{\ry=\rz=\pi=0\}.
\]
Let $\mathsf{\sv_{1}}\to\sv^{|F|}$ be the blowup along $C_{1}$.
Then the non-regular locus of $\mathsf{\sv}_{1}$ is exactly the union
of the strict transforms $C_{2}'$ and $C_{3}'$ of $C_{2}$ and $C_{3}$.
Moreover the singularities of $\mathsf{\sv}_{1}$ are two trivial
families of the $A_{1}$-singularity over $\AA_{k}^{1}$. Let $\mathsf{\sv}_{2}\to\mathsf{\sv_{1}}$
be the blowup along $C_{2}'$ and $C_{3}'$. Then $\mathsf{\sv}_{2}$
is regular. If $A_{2}$ and $A_{3}$ are the exceptional prime divisors
over $C_{2}'$ and $C_{3}'$ respectively, then the smooth locus of
$\mathsf{\sv}_{2}\to D$ in the special fiber is the disjoint union
of open subsets $A_{2}'\subset E_{2}$ and $A_{3}'\subset E_{3}$
with $A_{2}'\cong A_{3}'\cong\AA_{k}^{2}$. Since the morphism $\mathsf{\sv}_{2}\to\sv^{|F|}$
is crepant and the $G$-action on its exceptional locus is trivial,
\[
M_{\st,G}(\sv^{|F|})=M_{\st,G}(\mathsf{\sv}_{2})=[A_{2}'\sqcup A_{3}']=2\LL^{2}
\]
and 

\[
M_{\st}^{G}(\sv)=M_{\st}^{G,E_{1}}(\sv)+M_{\st}^{G,E_{2}}(\sv)=\LL^{2}+3\LL,
\]
as desired.

\section{A wild non-linear example\label{sec:A-wild-example}}

In this section, we compute an example from the wild case. 

Suppose that $k$ has characteristic two. Let $V:=\Spec k[[\pi]][x,y]$
on which the group $G=\{1,g\}\cong\ZZ/2\ZZ$ acts by the transposition
of $x$ and $y$, and $\sv:=\Spec k[[\pi]][x,y]/(x+y+xy)$. The completion
of $\sv$ at the origin $o\in\sv_{0}\subset V_{0}$ gives 
\[
\Spec k[[\pi,x]]
\]
with the $G$-action by
\[
xg=\frac{x}{1+x}=x+x^{2}+x^{3}+\cdots.
\]
The invariant subring of $k[[\pi,x]]$ is
\[
k[[\pi,x]]^{g}=k[[\pi,\frac{x^{2}}{1+x}]].
\]
Since 
\[
k[[x]]=\frac{k[[\frac{x^{2}}{1+x}]][X]}{\left\langle F(X)\right\rangle },\, F(X):=X^{2}+\frac{x^{2}}{1+x}X+\frac{x^{2}}{1+x},
\]
the different of $k[[x]]/k[[\frac{x^{2}}{1+x}]]$ is 
\[
\left\langle F'(x)\right\rangle =\left\langle x^{2}\right\rangle 
\]
(see \cite[page 56, Cor. 2]{MR554237}). Let $Z\subset V$ be the
zero section of $V\to D$, defined by the ideal $\left\langle x,y\right\rangle \subset k[[\pi]][x,y]$.
We regard $Z$ as a prime divisor on $\sv$. Note that $2Z$ is defined
by $x+y=0$. 

If we put $\fv=(\sv,\delta=-2Z,o)$ and $\fx=(\sx,0,\bar{o})$ with
$\bar{o}$ the image of $o$, then the quotient morphism $q:\fv\to\fx$
is crepant. We obviously have 
\[
M_{\st}(\fx)=1.
\]
 Next we will verify that $M_{\st}^{G}(\fv)=1$. For the trivial $G$-cover
$E_{1}=D\sqcup D\to D$, since $\sv^{|D|}=\sv$, from Proposition
\ref{prop:equivariant explicit formula}, we have
\[
M_{\st}^{G,E_{1}}(\fv)=\frac{\LL-1}{\LL^{3}-1}=\frac{1}{\LL^{2}+\LL+1}.
\]
Let $E=F=\Spec k[[\rho]]$ be any non-trivial $G$-cover of $D=\Spec k[[\pi]]$.
The associated tuning module $\Xi_{F}$ is generated by two elements
$\alpha_{1}$ and $\alpha_{2}$ given by 
\[
\alpha_{1}:x\mapsto1,\, y\mapsto1
\]
and 
\[
\alpha_{2}:x\mapsto\rho,\, y\mapsto\rho g.
\]
Let $\rx$ and $\ry$ be the dual basis of $\alpha_{1}$ and $\alpha_{2}$.
Then $u^{*}$ is given by
\begin{align*}
k[[\pi]][x,y] & \to k[[\rho]][\rx,\ry]\\
x & \mapsto\rx+\rho\ry\\
y & \mapsto\rx+(\rho g)\ry.
\end{align*}
Therefore $\sv^{\left\langle F\right\rangle }$ and $\sv^{|F|}$ are
defined by
\begin{align*}
 & (\rx+\rho\ry)(\rx+(\rho g)\ry)+(\rx+\rho\ry)+(\rx+(\rho g)\ry)\\
 & =\rx^{2}+\Nr(\rho)\ry^{2}+\Tr(\rho)\ry(1+\rx)\\
 & =0.
\end{align*}
The $G$-action on $k[[\rho]][\rx,\ry]$ is given by
\begin{equation}
\rx g=\rx,\,\ry g=\frac{\rho g}{\rho}\ry.\label{eq:wild action example}
\end{equation}
The pull-back of $2Z$ to $\sv^{\left\langle F\right\rangle }$ is
defined by $\Tr(\rho)\ry$. Let $S:=\sv_{0}^{|F|}$, regarded as a
prime divisor on $\sv^{|F|}$ and let $B$ be the prime divisor on
$\sv^{|F|}$ such that $2B$ is defined by $\ry=0$. From Corollary
\ref{cor: hypersurface by an invariant polynomial}, the boundary
$\delta^{|F|}$ of $\fv^{|F|}$ is 
\[
-4nS-2B
\]
with $n\in\ZZ_{>0}$ given by $\left\langle \Tr(\rho)\right\rangle =\left\langle \pi^{n}\right\rangle $.
The center of $\fv^{|F|}$ is $\sv_{0}^{|F|}$. Hence
\[
M_{\st,G}(\fv^{|F|})=M_{\st,G}(\sv^{|F|},-2B)\LL^{-2n}.
\]

Let us now consider the case $n=1$. The variety $\sv^{|F|}$ has
two $A_{1}$-singularities at 
\[
(\rx,\ry,\pi)=(0,0,0),\,(0,1,0).
\]
Blowing them up, we get a crepant morphism $\tilde{\sv}^{|F|}\to\sv^{|F|}$.
Let $N_{0}$ and $N_{1}$ be the exceptional prime divisors over $(0,0,0)$
and $(0,1,0)$ respectively. The $G$-action on $N_{0}$ is trivial
and the one on $N_{1}$ linear. Let $\tilde{B}\subset\tilde{\sv}^{|F|}$
be the strict transform of $B$. The morphism $(\tilde{\sv}^{|F|},-2\tilde{B}-N_{0})\to(\sv^{|F|},-2B)$
is crepant. Since $\tilde{\sv}^{|F|}$ is regular and the smooth locus
of $\tilde{\sv}^{|F|}\to D$ in the special fiber is 
\[
N_{0}\setminus\{1\text{ point}\}\sqcup N_{1}\setminus\{1\text{ point}\},
\]
where the removed point of $N_{0}$ is different from the intersection
$N_{0}\cap\tilde{B}$. Therefore 
\begin{align*}
M_{\st,G}(\sv^{|F|},-2B) & =M_{\st,G}(\tilde{\sv}^{|F|},-2\tilde{B}-N_{0})\\
 & =\LL+\left((\LL-1)+\frac{\LL-1}{\LL^{3}-1}\right)\LL^{-1}\\
 & =\frac{\LL(\LL+1)^{2}}{\LL^{2}+\LL+1}.
\end{align*}

Next consider the case $n\ge2$. Then $\sv^{|F|}$ is non-regular
only at the origin $o=(0,0,0)$. The completion of $\sv^{|F|}$ at
the origin is 
\[
\Spec\frac{k[[\pi,\rx,\ry]]}{\left\langle \rx^{2}+\pi\ry^{2}+\pi^{n}\ry\right\rangle }
\]
after a suitable change of coordinates, which is the $D_{2n}^{0}$-singularity
in Artin's classification \cite{MR0450263}. Let $f:\tilde{\sv}^{|F|}\to\sv^{|F|}$
be the minimal resolution. The exceptional prime divisors $N_{1},\dots,N_{2n}$
and the strict transform $\tilde{B}$ of $B$ and the one $\tilde{S}$
of $S$ are arranged as indicated in the following dual graph:\begin{equation*}
\xymatrix{
&*++[o][F]{ \substack{N_1 \\ (1,n-1)}} \ar@{-}[dr]\\
&&*++[o][F]{ \substack{N_3 \\ (2,2n-2)}}\ar@{-}[r] &*++[o][F]{ \substack{N_4 \\ (2,2n-3)}}\ar@{-}[r] & \cdots\ar@{-}[r] &*++[o][F]{ \substack{N_{2n} \\ (2,1)}}\ar@{-}[r]& *++[F]{ \substack{\tilde{S} \\ (2,0)}}\\
*++[F]{ \substack{\tilde{B} \\ (0,2)}}\ar@{-}[r]&*++[o][F]{ \substack{N_2 \\ (1,n)}}\ar@{-}[ur]
}
\end{equation*}Here the pairs of numbers, say $(a,b)$, mean that $a$ is the multiplicity
of the relevant prime divisor in $f^{*}(2S)$ and $b$ the one in
$f^{*}(2B)$. If we put 
\[
\tilde{\delta}^{|F|}:=-2\tilde{B}-(n-1)N_{1}-nN_{2}-\sum_{i=2}^{2n-1}(2n-i)N_{i+1},
\]
then the morphism 
\[
(\tilde{\sv}^{|F|},\tilde{\delta}^{|F|})\to(\sv^{|F|},-2B)
\]
is crepant. Since $N_{1}$ and $N_{2}$ are the only prime divisors
having multiplicity one in $f^{*}(2S)$, the smooth locus of the morphism
$\tilde{\sv}^{|F|}\to D$ in the special fiber is 
\[
(N_{1}\sqcup N_{2})\setminus N_{3}.
\]
Since the $G$-action on the exceptional locus of $f$ is trivial,
we have
\begin{align*}
M_{\st,G}(\tilde{\sv}^{|F|},\tilde{\delta}^{|F|}) & =\LL\cdot\LL^{-n+1}+\left((\LL-1)+\frac{\LL-1}{\LL^{3}-1}\right)\LL^{-n}\\
 & =\frac{(\LL+1)^{2}\LL^{2-n}}{\LL^{2}+\LL+1}.
\end{align*}

In summary, for $n>0$, we have
\[
M_{\st}^{G,E}(\fv)=\frac{(\LL+1)^{2}\LL^{2-3n}}{\LL^{2}+\LL+1}
\]
Since the locus of $E\in\GCov(D)$ with $\ord_{\pi}\,\Tr(\rho)=n$
is homeomorphic to $\GG_{m,k}\times\AA_{k}^{n-1}$ (see \cite{Yasuda:2012fk}),
\begin{align*}
M_{\st}^{G}(\fv) & =M_{\st}^{G,E_{1}}(\fv)+\int_{\GCov(D)\setminus\{E_{1}\}}M_{\st}^{G,E}(\fv)\, d\tau\\
 & =\frac{1}{\LL^{2}+\LL+1}+\sum_{n=1}^{\infty}\frac{(\LL+1)^{2}\LL^{2-3n}}{\LL^{2}+\LL+1}\times(\LL-1)\LL^{n-1}\\
 & =1.
\end{align*}

\section{Stable hyperplanes in permutation representations\label{sec:stable hyperplanes}}

When $G$ acts on $V$ by permutations of coordinates, then functions
$\bv_{V}$ and $\bw_{V}$ can be computed by using Artin or Swan conductors,
or discrminants or differents \cite{Wood-Yasuda-I}. In this section,
we generalize it to the case of a hyperplane in a permutation representation
defined by an invariant linear form.

Suppose that $G$ acts on 
\[
V=\AA_{D}^{d}=\Spec\cO_{D}[x_{1},\dots,x_{d}]
\]
by permutations of coordinates and 
\[
\AA_{D}^{d-1}\cong\sv=\Spec\cO_{V}/\left\langle f\right\rangle \subset V
\]
is a hyperplane defined by a $G$-invariant linear form 
\[
f=\sum_{i=1}^{d}f_{i}x_{i}\in M^{G}\quad\left(M:=\bigoplus_{i=1}^{d}\cO_{D}x_{i}\right).
\]
The assumption that $\AA_{D}^{d-1}\cong\sv$ means that at least one
coefficient $f_{i}$ is a unit in $\cO_{D}$. 

Fix $E\in\GCov(D)$ and a connected component $F$ of $E$ with stabilizer
$H$. Let 
\[
\{x_{1},x_{2},\dots,x_{d}\}=O_{1}\sqcup O_{2}\sqcup\cdots\sqcup O_{l}
\]
be the decomposition into the $H$-orbits. Reordering $x_{1},\dots,x_{d}$
if necessary, we suppose that 
\[
O_{j}=x_{j}H,\,1\le j\le l.
\]
The assumption $f\in M^{G}$ now means that if $i\in O_{j}$ and if
$h_{i}\in H$ is any element sending $x_{j}$ to $x_{i}$, then 
\[
f_{i}=f_{j}h_{i}.
\]
For $1\le j\le l$, we put $H_{j}\subset H$ to be the stabilizer
of $j$, which has order $\sharp H/\sharp O_{j}$, and put $C:=\Spec\left(\cO_{F}\right)^{H_{j}}$,
which is a  cover of $D$ of degree $\sharp O_{j}$. Accordingly 
\[
C:=\bigsqcup_{j=1}^{l}C_{j}\to D
\]
is a cover of degree $d$. Here we say that a morphism $C\to D$ is
a cover if $C$ is the normalization of $D$ in some finite \'{e}tale
(not necessarily Galois) $K(D)$-algebra. We obtain $C$ from $E$
also in the following way. If $G_{D}$ is the absolute Galois group
of $K(D)$, then the $G$-cover $E$ corresponds to a continuous homomorphism
$\rho:G_{D}\to G$ (up to conjugation). Since $G$ acts on $\{1,\dots,d\}$
by conjugation, we get a continuous action of $G_{D}$ on $\{1,\dots,d\}$,
giving a finite \'{e}tale cover $C^{\circ}\to\Spec K(D)$. Taking
the normalization of $D$ in $C^{\circ}$, we get $C$ (up to isomorphism). 

For a cover $C\to D$, we denote by $d_{C/D}$ its \emph{discriminant
exponent}: the discriminant of the extension $K(C)/K(D)$ is $\fm_{D}^{d_{C/D}}$.
If $C$ is connected, then $d_{C/D}$ is the same as the different
exponent appearing in Lemma \ref{lem:Delta transform} (note that
since $C$ and $D$ have the same algebraically closed residue field,
the ramification index of a cover $C\to D$ is equal to its degree.)
\begin{lem}
We have
\[
\bv_{V}(E)=\frac{d_{C/D}}{2}=\frac{1}{2}\sum_{j=1}^{l}d_{C_{j}/D}.
\]
\end{lem}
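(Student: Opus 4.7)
The plan is to decompose along the $H$-orbit structure of $\{x_{1},\dots,x_{d}\}$ and reduce the statement to a Gram-matrix computation for the trace form on each connected component $C_{j}$ of $C$. Since $M=\bigoplus_{j=1}^{l}M_{j}$ as $\cO_{D}[H]$-modules with $M_{j}:=\bigoplus_{i\in O_{j}}\cO_{D}x_{i}$, the functors $\Hom_{\cO_{D}}(-,\cO_{F})$ and $\Hom_{\cO_{D}}^{H}(-,\cO_{F})$ distribute over these summands, giving $\Xi_{F}=\bigoplus_{j}\Xi_{F,j}$ with $\Xi_{F,j}:=\Hom_{\cO_{D}}^{H}(M_{j},\cO_{F})$. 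Hence
\[
\bv_{V}(E)=\sum_{j=1}^{l}\bv_{j},\qquad \bv_{j}:=\frac{1}{\sharp H}\length_{\cO_{D}}\frac{\Hom_{\cO_{D}}(M_{j},\cO_{F})}{\cO_{F}\cdot\Xi_{F,j}},
\]
and together with the additivity $d_{C/D}=\sum_{j}d_{C_{j}/D}$ over the decomposition $C=\bigsqcup_{j}C_{j}$ (the discriminant splits over products of rings), the problem reduces to showing $\bv_{j}=d_{C_{j}/D}/2$ for each $j$.

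Fix $j$ and choose coset representatives $h_{1}=1,h_{2},\dots,h_{\sharp O_{j}}$ of $H_{j}\backslash H$ with $x_{j}h_{i}=x_{i}$. An $H$-equivariant map $f:M_{j}\to\cO_{F}$ is determined by $f(x_{j})$, which is forced to lie in $\cO_{F}^{H_{j}}=\cO_{C_{j}}$, and satisfies $f(x_{i})=\iota_{i}(f(x_{j}))$, where $\iota_{i}:\cO_{C_{j}}\to\cO_{F}$, $a\mapsto ah_{i}$, are the $\sharp O_{j}$ distinct $\cO_{D}$-algebra embeddings of $\cO_{C_{j}}$ into $\cO_{F}$. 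This identifies the inclusion $\Xi_{F,j}\hookrightarrow\Hom_{\cO_{D}}(M_{j},\cO_{F})\cong\cO_{F}^{\sharp O_{j}}$ with the map $\cO_{C_{j}}\hookrightarrow\cO_{F}^{\sharp O_{j}}$, $a\mapsto(\iota_{i}(a))_{i}$. Tensoring over $\cO_{D}$ with $\cO_{F}$ gives the $\cO_{F}$-linear map
\[
\Phi:\cO_{F}\otimes_{\cO_{D}}\cO_{C_{j}}\longrightarrow\cO_{F}^{\sharp O_{j}},\qquad b\otimes a\mapsto(b\,\iota_{i}(a))_{i},
\]
whose image is precisely $\cO_{F}\cdot\Xi_{F,j}$. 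Both sides are free $\cO_{F}$-modules of rank $\sharp O_{j}$, and after inverting a uniformizer $\Phi$ becomes the standard splitting $K(F)\otimes_{K(D)}K(C_{j})\cong K(F)^{\sharp O_{j}}$ of the \'{e}tale $K(D)$-algebra $K(C_{j})$, so $\Phi$ is injective with finite cokernel.

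Picking an $\cO_{D}$-basis $a_{1},\dots,a_{\sharp O_{j}}$ of $\cO_{C_{j}}$, the map $\Phi$ is represented by the matrix $N=(\iota_{i}(a_{k}))_{i,k}$ with entries in $\cO_{F}$, so by Smith normal form over the DVR $\cO_{F}$,
\[
\length_{\cO_{F}}\bigl(\cO_{F}^{\sharp O_{j}}/\cO_{F}\cdot\Xi_{F,j}\bigr)=v_{F}(\det N).
\]
The identity $\Tr_{C_{j}/D}(a_{k}a_{l})=\sum_{i}\iota_{i}(a_{k})\iota_{i}(a_{l})$ shows $N^{t}N=(\Tr_{C_{j}/D}(a_{k}a_{l}))_{k,l}$, so $(\det N)^{2}\in\cO_{D}$ has $v_{D}$-valuation equal to the discriminant exponent $d_{C_{j}/D}$. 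Since $F/D$ is totally ramified of degree $\sharp H$ (both residue fields are $k$), $v_{F}|_{\cO_{D}}=\sharp H\cdot v_{D}$, yielding $v_{F}(\det N)=\sharp H\cdot d_{C_{j}/D}/2$. Finally, because $\cO_{F}$ and $\cO_{D}$ share the residue field $k$, $\cO_{F}$-length and $\cO_{D}$-length agree on finite-length modules, so $\bv_{j}=d_{C_{j}/D}/2$, completing the reduction.

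The main technical obstacle is the injectivity of $\Phi$ together with the Gram-matrix identification $(\det N)^{2}=\mathrm{disc}(\cO_{C_{j}}/\cO_{D})$. Both are standard once the setup is correct, but weaving them into the $H$-orbit decomposition requires some care with the right-action convention on $\cO_{F}$, the choice of coset representatives, and the fact that the $\sharp O_{j}$ embeddings $\iota_{i}$ exhaust the $\cO_{D}$-algebra homomorphisms $\cO_{C_{j}}\to\cO_{F}$.
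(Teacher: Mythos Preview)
Your proof is correct. The paper establishes this lemma simply by citing \cite[Lemma 3.4]{MR2354797} and \cite[Theorem 4.7]{Wood-Yasuda-I}, so your argument is genuinely more self-contained. Your route---decompose $M$ along $H$-orbits, identify $\Xi_{F,j}$ with $\cO_{C_{j}}$ via evaluation at $x_{j}$, and compute the cokernel length as $v_{F}(\det N)$ using the classical identity $N^{t}N=(\Tr_{C_{j}/D}(a_{k}a_{l}))_{k,l}$---is essentially the computation those references encapsulate, written out in full. The payoff of your version is that the reader sees directly why the discriminant appears (as the determinant of the Gram matrix of the trace form), at the cost of a page of linear algebra; the paper's citation is terse but opaque unless one chases the references. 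One small remark: your observation that $\cO_{F}$-length and $\cO_{D}$-length agree is correct precisely because both residue fields equal $k$, so the simple $\cO_{F}$-module $\cO_{F}/\fm_{F}\cong k$ is already simple over $\cO_{D}$; for general DVR extensions the two lengths would differ by the residue degree, so it is worth flagging this as you did.
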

\begin{proof}
This follows from \cite[Lemma 3.4]{MR2354797} and \cite[Theorem 4.7]{Wood-Yasuda-I}.
\end{proof}
We have an isomorphism 
\begin{align*}
\alpha:\Xi_{F}=\Hom_{\cO_{D}}^{H}(M,\cO_{F}) & \to\bigoplus_{j=1}^{l}\cO_{C_{j}}\\
\phi & \mapsto(\phi(x_{1}),\dots,\phi(x_{l})).
\end{align*}
For each $e\ge0$, we choose an element $\rho_{j,e}\in\cO_{C_{j}}$
with $v_{C_{j}}(\rho_{j,e})=e$, where $v_{C_{j}}$ is the normalized
valuation of $K(C_{j})$. The elements 
\[
\rho_{j,e}\quad(0\le e<\sharp O_{j}=[C_{j}:D])
\]
form a basis of $\cO_{C_{j}}$ as an $\cO_{D}$-module, and 
\[
\sigma_{j,e}:=(0,\dots,0,\overset{\underset{\smallsmile}{j}}{\rho_{j,e}},0,\dots,0)\quad(1\le j\le l,\,0\le e<\sharp O_{j})
\]
form a basis of $\bigoplus_{j=1}^{l}\cO_{C_{j}}$. Let $\psi_{j,e}\in M^{\left\langle F\right\rangle }$,
$1\le j\le l$, $0\le e<\sharp O_{j}$, be the dual basis of $\sigma_{j,e}$
through the isomorphism $\alpha$. The map $u_{F}^{*}:M\to M^{\left\langle F\right\rangle }$
sends $x_{i}$ with $i\in O_{j}$ to 
\[
\sum_{e=0}^{\sharp O_{j}-1}(\rho_{j,e}\cdot h_{i})\psi_{j,e},
\]
where $h_{i}$ is any element of $H$ sending $x_{j}$ to $x_{i}$
as above, and $f$ to
\begin{align*}
u_{F}^{*}(f) & =\sum_{j=1}^{l}\sum_{e=0}^{\sharp O_{j}-1}\left(\sum_{i\in O_{j}}f_{i}(\rho_{j,e}h_{i})\right)\psi_{j,e}\\
 & =\sum_{\substack{1\le j\le l\\
0\le e<\sharp O_{j}
}
}\Tr_{C_{j}/D}(f_{j}\rho_{j,e})\psi_{j,e}.
\end{align*}
Here $\Tr_{C_{j}/D}$ is the trace map $K(C_{j})\to K(D)$. 
\begin{lem}
Let $B\to D$ be a connected cover of degree $n$. For $e\in\ZZ_{\ge0}$,
we have 
\[
\Tr_{B/D}(\fm_{B}^{e})=\fm_{D}^{\left\lfloor \frac{e+d_{B/D}}{n}\right\rfloor }.
\]
Here $\left\lfloor r\right\rfloor $ is the largest integer $\le r$.
In particular, there exists a generator $\rho_{e}$ of $\fm_{B}^{e}$
such that 
\[
v_{D}(\Tr_{B/D}(\rho_{e}))=\left\lfloor \frac{e+d_{B/D}}{n}\right\rfloor 
\]
with $v_{D}$ the normalized valuation of $K(D)$. \end{lem}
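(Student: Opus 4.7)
The plan is to reduce the lemma to the classical trace--different duality. First, since the cover $B \to D$ arises by normalization in a finite \'{e}tale $K(D)$-algebra and $B$ is connected, the extension $K(B)/K(D)$ is separable; consequently the codifferent admits the classical characterization
\[
\mathfrak{D}_{B/D}^{-1} = \{x \in K(B) : \Tr_{B/D}(x\cO_B) \subset \cO_D\} = \fm_B^{-d_{B/D}},
\]
the second equality being a reformulation of the characterization of $d_{B/D}$ recalled in Lemma \ref{lem:Delta transform} (the different and discriminant exponents coincide because $k$ is algebraically closed, and for the same reason $\fm_D\cO_B = \fm_B^n$).

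For the main formula, I would use $\cO_D$-linearity of the trace to chain the equivalences
\[
\Tr_{B/D}(\fm_B^e) \subset \fm_D^m
\iff \fm_D^{-m}\fm_B^e \subset \mathfrak{D}_{B/D}^{-1}
\iff \fm_B^{e-mn} \subset \fm_B^{-d_{B/D}}
\iff m \le \tfrac{e+d_{B/D}}{n},
\]
where the first equivalence rests on the fact that $\fm_D^{-m}\fm_B^e$ is a fractional ideal of $\cO_B$. Since $m$ is an integer, the last condition is equivalent to $m \le \lfloor(e+d_{B/D})/n\rfloor$, and since separability forces $\Tr_{B/D}(\fm_B^e)$ to be a nonzero fractional ideal of the DVR $\cO_D$, it must equal $\fm_D^{\lfloor(e+d_{B/D})/n\rfloor}$, as required.

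For the ``in particular'' statement, set $m := \lfloor(e+d_{B/D})/n\rfloor$ and fix a uniformizer $\pi_B$ of $\cO_B$. If $v_D(\Tr_{B/D}(\pi_B^e)) = m$, take $\rho_e := \pi_B^e$. Otherwise $v_D(\Tr_{B/D}(\pi_B^e)) > m$; by the surjectivity $\Tr_{B/D} \colon \fm_B^e \twoheadrightarrow \fm_D^m$ just established, pick $\tau \in \fm_B^e$ with $v_D(\Tr_{B/D}(\tau)) = m$. If $v_B(\tau) = e$ take $\rho_e := \tau$; if instead $v_B(\tau) \ge e+1$, then $\rho_e := \pi_B^e + \tau$ is a generator of $\fm_B^e$ whose trace has valuation $\min(v_D(\Tr_{B/D}(\pi_B^e)), v_D(\Tr_{B/D}(\tau))) = m$ by the ultrametric inequality on $\cO_D$.

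The only real obstacle is the ``in particular'' refinement, since the main formula furnishes only some element of $\fm_B^e$ whose trace attains the valuation $m$, not a priori a generator. The case split above dispatches every possibility by pivoting on whether $\pi_B^e$ itself works or a perturbation of it is needed, with the ultrametric inequality blocking the potential cancellations that would otherwise spoil the argument.
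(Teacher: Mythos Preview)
Your proof is correct and follows essentially the same approach as the paper. The paper cites Serre's \emph{Local Fields} (Proposition~7, page~50) for the equivalence $\Tr_{B/D}(\fm_B^e)\subset\fm_D^a \iff \fm_B^e\subset\fm_B^{an-d_{B/D}}$, which is exactly the codifferent duality you unpack explicitly; for the second assertion the paper argues by contradiction where you give a direct case split, but the underlying perturbation argument (add an element of higher $B$-valuation whose trace attains the minimum) is identical.
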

\begin{proof}
From \cite[Proposition 7, page 50]{MR554237}, for $a\in\ZZ$, 
\begin{align*}
\Tr_{B/D}(\fm_{B}^{e})\subset\fm_{D}^{a} & \Leftrightarrow\fm_{B}^{e}\subset\fm_{B}^{an-d_{B/D}}\\
 & \Leftrightarrow a\le\frac{e+d_{B/D}}{n}.
\end{align*}
This shows the first assertion. To show the second assertion, suppose
by contrary that there does not exist such a generator of $\fm_{B}^{e}$.
From the first assertion, there exists an element $\tau\in\fm_{B}^{e+1}$
with 
\[
v_{D}(\Tr_{B/D}(\tau))=\left\lfloor \frac{e+d_{B/D}}{n}\right\rfloor .
\]
For any generator $\rho$ of $\fm_{B}^{e}$, $\rho+\tau$ is a generator
with the desired property, a contradiction. \end{proof}
\begin{prop}
\label{prop: compute b hyperplane permutation}Let us write $u_{F}^{*}(f)=\pi_{F}^{b}\phi$
with $\phi$ irreducible (a linear form over $\cO_{D}$ with at least
one coefficient a unit). Namely $b$ is the order of $u_{F}^{*}(f)$
along $V_{0}^{\left\langle F\right\rangle }$. Then 
\begin{align*}
b & =\sharp H\cdot\min\left\{ v_{D}(f_{j})+\left\lfloor \frac{d_{C_{j}/D}}{[C_{j}:D]}\right\rfloor \mid1\le j\le l\right\} .
\end{align*}
Here we put $v_{D}(0):=+\infty$ by convention. \end{prop}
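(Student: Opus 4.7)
The plan is to read off $b$ directly from the expansion of $u_F^*(f)$ in the $\cO_F$-basis $\{\psi_{j,e}\}$ of $M^{\langle F\rangle}$ obtained just above the statement, namely
\[
u_F^*(f)=\sum_{\substack{1\le j\le l\\0\le e<\sharp O_j}}\Tr_{C_j/D}(f_j\rho_{j,e})\,\psi_{j,e}.
\]
First I would observe that, since $V_0^{\langle F\rangle}$ is taken with the reduced scheme structure, its local equation inside $V^{\langle F\rangle}=\Spec S_{\cO_F}^\bullet M^{\langle F\rangle}$ is $\pi_F$ (not $\pi_D=(\text{unit})\cdot\pi_F^{\sharp H}$). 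Consequently, for any linear form $\sum c_{j,e}\psi_{j,e}\in M^{\langle F\rangle}$ the order along $V_0^{\langle F\rangle}$ equals $\min_{j,e} v_F(c_{j,e})$, so
\[
b=\min_{j,e}v_F\bigl(\Tr_{C_j/D}(f_j\rho_{j,e})\bigr).
\]

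Next I would exploit that the trace coefficients lie in $\cO_D$: because $k$ is algebraically closed, the connected $H$-cover $F\to D$ is totally ramified of degree $\sharp H$, whence $v_F|_{\cO_D}=\sharp H\cdot v_D$. Thus
\[
b=\sharp H\cdot\min_{1\le j\le l}\,\min_{0\le e<\sharp O_j}v_D\bigl(\Tr_{C_j/D}(f_j\rho_{j,e})\bigr).
\]

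For the inner minimum I would use $\cO_D$-linearity of $\Tr_{C_j/D}$ together with the fact that $\{\rho_{j,e}\}_{0\le e<\sharp O_j}$ is an $\cO_D$-basis of $\cO_{C_j}$: the ideal of $\cO_D$ generated by the values $\Tr_{C_j/D}(f_j\rho_{j,e})$ as $e$ varies equals
\[
\Tr_{C_j/D}(f_j\cO_{C_j})=f_j\,\Tr_{C_j/D}(\cO_{C_j}),
\]
and in a discrete valuation ring the minimum valuation of a system of generators is the valuation of the generated ideal. The preceding trace lemma (with $e=0$ there) gives $\Tr_{C_j/D}(\cO_{C_j})=\fm_D^{\lfloor d_{C_j/D}/[C_j:D]\rfloor}$, so the inner minimum is $v_D(f_j)+\lfloor d_{C_j/D}/[C_j:D]\rfloor$, with the case $f_j=0$ absorbed by the convention $v_D(0)=+\infty$. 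Taking the min over $j$ and multiplying by $\sharp H$ yields the claim.

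The step I expect to be the main obstacle is really the bookkeeping in the first paragraph: correctly converting the geometric quantity ``order along the divisor $V_0^{\langle F\rangle}$'' into $\min_{j,e}v_F(c_{j,e})$, keeping track both of the reduced scheme structure on $V_0^{\langle F\rangle}$ and of the fact that $F/D$ is totally ramified of index $\sharp H$, which is what introduces the overall factor $\sharp H$ in the final formula. Once this identification is set up, the substitution $v_F=\sharp H\cdot v_D$, the $\cO_D$-linearity of the trace, and the already-established trace lemma combine into a short routine computation.
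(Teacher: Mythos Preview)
Your proof is correct and follows essentially the same route as the paper: identify $b$ with $\min_{j,e} v_F\bigl(\Tr_{C_j/D}(f_j\rho_{j,e})\bigr)$, convert $v_F$ to $\sharp H\cdot v_D$, and evaluate the inner minimum via the trace lemma. The only difference is cosmetic: the paper invokes the second assertion of the lemma to pick specific generators $\rho_{j,e}$ realizing the minimal trace valuation, whereas you bypass this by observing that the ideal generated by the $\Tr_{C_j/D}(f_j\rho_{j,e})$ is $f_j\Tr_{C_j/D}(\cO_{C_j})$ and then applying only the first assertion with $e=0$ --- a mild streamlining of the same argument.
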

\begin{proof}
From the lemma above, for a suitable choice of $\rho_{j,e}$, we have
\[
v_{D}\left(\Tr_{C_{j}/D}(f_{j}\rho_{j,e})\right)=\left\lfloor \frac{v_{C_{j}}(f_{j})+e+d_{C_{j}/D}}{[C_{j}:D]}\right\rfloor .
\]
Then 
\begin{align*}
b & =\min\{v_{F}(\Tr_{C_{j}/D}(f_{j}\rho_{j,e}))\mid1\le j\le l,\,0\le e<\sharp O_{j}\}\\
 & =\sharp H\cdot\min\left\{ \left\lfloor \frac{v_{C_{j}}(f_{j})+e+d_{C_{j}/D}}{[C_{j}:D]}\right\rfloor \mid1\le j\le l,\,0\le e<\sharp O_{j}\right\} \\
 & =\sharp H\cdot\min\left\{ v_{D}(f_{j})+\left\lfloor \frac{d_{C_{j}/D}}{[C_{j}:D]}\right\rfloor \mid1\le j\le l\right\} 
\end{align*}
which shows the proposition.\end{proof}
\begin{cor}
\label{cor:weight non-permutation}For $E\in\GCov(D)$, we have
\begin{align*}
\bv_{\sv}(E) & =\frac{1}{2}\sum_{j=1}^{l}d_{Cj/D}-\min\left\{ v_{D}(f_{j})+\left\lfloor \frac{d_{C_{j}/D}}{[C_{j}:D]}\right\rfloor \mid1\le j\le l\right\} .
\end{align*}
In particular, if $f=x_{1}+x_{2}+\cdots+x_{d}$, then 
\begin{align*}
\bv_{\sv}(E) & =\frac{1}{2}\sum_{j=1}^{l}d_{C_{j}/D}-\min\left\{ \left\lfloor \frac{d_{C_{j}/D}}{[C_{j}:D]}\right\rfloor \mid1\le j\le l\right\} .
\end{align*}
\end{cor}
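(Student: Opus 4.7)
The plan is to establish the reduction
\[
\bv_{\sv}(E) = \bv_{V}(E) - b/\sharp H,
\]
with $b$ the integer computed in Proposition \ref{prop: compute b hyperplane permutation}. Granted this, the first formula is immediate from the earlier lemma asserting $\bv_V(E) = \tfrac{1}{2}\sum_j d_{C_j/D}$ combined with Proposition \ref{prop: compute b hyperplane permutation}, and the second formula follows by specializing to $f = x_1+\cdots+x_d$, so that every $f_j = 1$ and $v_D(f_j) = 0$.

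To obtain the reduction, I would first apply Definition \ref{def: weight} to $\sv$ itself: writing $M_{\sv} := M/\cO_D\cdot f$ for the $H$-module of linear forms on $\sv$, one has
\[
\bv_{\sv}(E) = \tfrac{1}{\sharp H}\cdot \length_{\cO_D}\!\left[\Hom_{\cO_D}(M_\sv,\cO_F)/\bar{\Xi}^{\sv}_{F}\right],
\]
where $\bar{\Xi}^{\sv}_{F} := \cO_F \cdot \Hom^{H}_{\cO_D}(M_\sv,\cO_F)$. Applying $\Hom^{H}_{\cO_D}(-,\cO_F)$ to the short exact sequence of $H$-modules $0 \to \cO_D\cdot f \to M \to M_\sv \to 0$ yields
\[
0 \to \Xi^{\sv}_F \to \Xi_F \xrightarrow{\mathrm{ev}_f} I \to 0,
\]
where $I \subset \cO_F^H = \cO_D$ is the image of evaluation at $f$. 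Under the isomorphism $\alpha:\Xi_F \cong \bigoplus_j \cO_{C_j}$, this map becomes $(\psi_j) \mapsto \sum_j \Tr_{C_j/D}(f_j \psi_j)$, and the trace computation already carried out in the proof of Proposition \ref{prop: compute b hyperplane permutation} identifies $I = \fm_D^{b/\sharp H}$.

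Next, I would tensor this sequence over $\cO_D$ with $\cO_F$ and compare it with the sequence $0 \to \Hom(M_\sv,\cO_F) \to \Hom(M,\cO_F) \to \cO_F \to 0$ obtained from the same underlying $H$-module sequence by applying $\Hom_{\cO_D}(-,\cO_F)$ without taking invariants. Because $I$ is $\cO_D$-torsion-free the tensored top row remains exact, and the snake lemma applied to the two vertically aligned sequences---with all three vertical maps being inclusions---produces a short exact sequence of cokernels. Additivity of $\cO_D$-length then gives $\sharp H\cdot \bv_{\sv}(E) + b = \sharp H\cdot \bv_V(E)$, which is the desired reduction.

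The hard part will be justifying the identification $\Xi^{\sv}_F \otimes_{\cO_D} \cO_F \cong \bar{\Xi}^{\sv}_F$ (and its counterpart for $\Xi_F$) used implicitly above: the natural map must be shown to be injective with image exactly $\bar{\Xi}^{\sv}_F$ inside $\Hom_{\cO_D}(M_\sv,\cO_F)$. After inverting a uniformizer of $\cO_D$, this reduces to checking that $\Xi^{\sv}_F \otimes_{\cO_D} K(D)$ generates $\Hom_{K(D)}(M_\sv\otimes K(D), K(F))$ over $K(F)$, which follows by \'{e}tale descent for the $H$-torsor $K(F)/K(D)$---equivalently from a rank count, since $\Xi^{\sv}_F$ is $\cO_D$-free of rank $d-1$ by the same argument as for $\Xi_F$.
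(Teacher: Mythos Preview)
Your argument is correct and yields the same intermediate reduction $\bv_{\sv}(E)=\bv_{V}(E)-b/\sharp H$ that the paper obtains, but by a different route. The paper argues geometrically: since $\sv$ is itself an affine space over $D$, it can be untwisted directly, and the paper first checks that the resulting $\sv^{|F|}$ coincides with the subvariety of $V^{|F|}$ cut out from the untwisting of $V$ (this follows simply by dualizing the split inclusion $\Xi_{F}^{\sv}\hookrightarrow\Xi_{F}$ to a surjection $M^{|F|}\twoheadrightarrow\mathsf{m}^{|F|}$). Then the boundary $\delta^{|F|}$ of $\fv^{|F|}$ admits two computations---$-\bv_{\sv}(E)\cdot\sv_{0}^{|F|}$ from Lemma~\ref{lem:Delta transform} applied to $\sv$ directly, and $(b/\sharp H-\bv_{V}(E))\cdot\sv_{0}^{|F|}$ from Corollary~\ref{cor: hypersurface by an invariant polynomial}---and equating coefficients gives the reduction at once. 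Your snake-lemma computation on tuning modules sidesteps the boundary-divisor machinery entirely. The paper's route is shorter once that machinery is in place and illustrates how Section~\ref{sec:Computing-boundaries} is meant to be used in practice; yours is more elementary and self-contained, and the descent step you flag as the ``hard part'' is indeed routine (it is Galois descent for the $H$-torsor $\Spec K(F)\to\Spec K(D)$, or equivalently the rank count you indicate). One small remark: exactness of your tensored top row follows already from flatness of $\cO_{F}$ over $\cO_{D}$; the torsion-freeness of $I$ is what you actually use to see that the right-hand vertical map $I\otimes_{\cO_{D}}\cO_{F}\to\cO_{F}$ is injective.
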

\begin{proof}
In our situation, the symbol $\sv^{|F|}$ has, a priori, two meanings:
one is obtained by applying the untwisting technique directly to $\sv$
and the other by first applying it to $V$ and taking the induced
subvariety in $V^{|F|}$. However the two constructions actually coincide.
Indeed, if $\mathsf{m}$ is the linear part of $\cO_{\sv}$, then
we have a surjection $M\twoheadrightarrow\mathsf{m}$. It induces
a surjection $M^{|F|}\twoheadrightarrow\mathsf{m}^{|F|}$ and a closed
immersion $\AA_{D}^{d-1}\hookrightarrow\AA_{D}^{d}$. This shows the
claim. Therefore there is no confusion in the use of the symbol as
well as $\fv^{|F|}$. 

The boundary of $\fv^{|F|}$ is $-\bv_{\sv}(E)\cdot\sv_{0}^{|F|}$
from Lemma \ref{lem:Delta transform}, while 
\[
\left(\min\left\{ v_{D}(f_{j})+\left\lfloor \frac{d_{C_{j}/D}}{[C_{j}:D]}\right\rfloor \mid1\le j\le l\right\} -\bv_{V}(E)\right)\cdot\sv_{0}^{|F|}
\]
from Propositions \ref{prop: complete intersection} and \ref{prop: compute b hyperplane permutation}.
Comparing the coefficients shows the corollary.\end{proof}
\begin{rem}

\begin{enumerate}
\item Let $p$ denote the characteristic of $k$. If $p\nmid[C_{j}:D]$,
then $d_{C_{j}/D}=[C_{j}:D]-1$ and $\left\lfloor \frac{d_{C_{j}/D}}{[C_{j}:D]}\right\rfloor =0$.
Therefore, if $p\nmid d$ and if $f=x_{1}+\cdots+x_{d}$, then since
at least one $C_{j}$ satisfies $p\nmid[C_{j}:D]$, we have $\bv_{\sv}=\bv_{V}$.
This equality is also explained as follows. We have the exact sequence
\[
0\to\sv\to V\xrightarrow{(x_{1},\dots,x_{d})\mapsto\sum x_{i}}\AA_{D}^{1}\to0,
\]
whether we have $p\mid d$. If $p\nmid d$, this sequence splits.
The equality follows from the additivity of $\bv_{\bullet}$ (see
\cite{Wood-Yasuda-I}).
\item If, for some $j$, $f_{j}$ is a unit and $\sharp O_{j}=1$ (hence
$C_{j}=D$ and $d_{C_{j}/D}=0$), then the corollary above deduces
that $\bv_{\sv}=\bv_{V}.$ Again $V$ is isomorphic to the direct
sum of $\sv$ and a one-dimensional trivial representation, this time,
as an $H$-representation.
\end{enumerate}
\end{rem}
\begin{example}
Let $p$ be a prime number and $G=\left\langle g\right\rangle \cong\ZZ/p\ZZ$.
Suppose that $\cO_{D}=k[[\pi]]$ with $k$ of characteristic $p$
and that $G$ acts on $V=\Spec k[[\pi]][x_{1},\dots,x_{p}]$ by 
\[
g(x_{i})=\begin{cases}
x_{i+1} & (1\le i<p)\\
x_{1} & (i=p).
\end{cases}
\]
and that $\sv\subset V$ is the hyperplane defined by $f=x_{1}+\cdots+x_{p}$.
Let $E\in\GCov(D)$ be a connected $G$-cover. The \emph{ramification
jump} $j\in\ZZ_{>0}$ of $E$ is given by 
\[
j:=v_{D}(\pi_{E}g-\pi_{E})-1,
\]
which is not divisible by $p$. From \cite[page 83, Lemma 3]{MR554237},
\[
d_{E/D}=(p-1)(j+1).
\]
Accordingly,
\begin{align*}
\bv_{\sv}(E) & =\frac{d_{E/D}}{2}-\left\lfloor \frac{d_{E/D}}{p}\right\rfloor \\
 & =\frac{(p-1)(j+1)}{2}-\left\lfloor \frac{(p-1)(j+1)}{p}\right\rfloor \\
 & =\left(\frac{(p-1)(j-1)}{2}+(p-1)\right)-\left(1+\left\lfloor \frac{(p-1)j}{p}\right\rfloor \right)\\
 & =(p-2)+\left(\frac{(p-1)(j-1)}{2}-\left\lfloor \frac{(p-1)j}{p}\right\rfloor \right)\\
 & =(p-2)+\sum_{i=1}^{p-2}\left\lfloor \frac{ij}{p}\right\rfloor .
\end{align*}
The last equality follows from
\[
\frac{(p-1)(j-1)}{2}=\sum_{i=1}^{p-1}\left\lfloor \frac{ij}{p}\right\rfloor 
\]
(for instance, see \cite[page 94]{MR1001562}). Since $\codim(\sv_{0}^{G},\sv)=p-2$,
\[
\bw_{\sv}(E)=(p-2)-\bv_{\sv}(E)=-\sum_{i=1}^{p-2}\left\lfloor \frac{ij}{p}\right\rfloor ,
\]
which coincides with computation in \cite{Yasuda:2012fk} (see also
\cite{Yasuda:2013fk}).
\end{example}

\section{Some $S_{4}$-masses in characteristic two\label{sec: example S4}}

In this section, we consider the case where $\cO_{D}$ has characteristic
two, $G$ is the symmetric group $S_{4}$, $V:=\Spec\cO_{D}[x_{1},x_{2},x_{3},x_{4}]$
with the standard $G$-action and $\sv\subset V$ is the hyperplane
defined by $f=x_{1}+x_{2}+x_{3}+x_{4}$. The induced $G$-action on
$\sv$ is still faithful, since $\sv$ contains a point whose coordinates
are distinct one another, for instance, $(0,1,a,a+1)$ with $a\in k\setminus\{0,1\}$.
As an application of the computation of $\bv_{\sv}$ in the last section,
we will compute motivic integrals
\[
\MM=\int_{\GCov(D)}\LL^{-3\bv_{\sv}}\, d\tau\text{ and }\MM':=\int_{\GCov(D)}\LL^{3\bw_{\sv}}\, d\tau
\]
under some assumptions and observe that $\MM$ and $\MM'$ are dual
to each other. Such a duality was first observed in \cite{Wood-Yasuda-I}
and will be discussed in more details in \cite{Wood-Yasuda-II}. It
is also related to the Poincar\'{e} duality of stringy motifs. The
number 3 in the integrals was chosen, because for $n=1,2$, integrals
$\int_{\GCov(D)}\LL^{-n\bv_{\sv}}\, d\tau$ and $\int_{\GCov(D)}\LL^{n\bw_{\sv}}\, d\tau$
diverge. 

To compute $\MM$ and $\MM'$, we decompose them into the sums of
5 terms respectively. For $n\ge0$, let $\Fie_{n}$ and $\Eta_{n}$
be the (conjectural) moduli spaces of degree $n$ field extensions
and \'{e}tale extensions of $K(D)$ respectively. Since $G=S_{4}$,
giving a continuous homomorphism $\Gal(K(D)^{\sep}/K(D))\to G$ is
equivalent to giving a continuous $\Gal(K(D)^{\sep}/K(D))$-action
on $\{1,..,n\}$. Therefore the map
\begin{eqnarray*}
\GCov(D) & \to & \Eta_{4}\\
E & \mapsto & \cO_{D}\otimes_{\cO_{D}}K(D)
\end{eqnarray*}
is bijective. Since there are exactly 5 partitions of 4,
\[
(4),\,(3,1),\,(2^{2}),\,(2,1^{2}),\,(1^{4}).
\]
and $\Fie_{1}$ is a singleton, we have the following decomposition
of $\Eta_{4}$,
\begin{align*}
\Eta_{4} & \cong\Fie_{4}\sqcup\left(\Fie_{3}\times\Fie_{1}\right)\sqcup\frac{(\Fie_{2})^{2}}{\iota}\sqcup\left(\Fie_{2}\times\left(\Fie_{1}\right)^{2}\right)\sqcup\left(\Fie_{1}\right)^{4}\\
 & \cong\Fie_{4}\sqcup\Fie_{3}\sqcup\frac{(\Fie_{2})^{2}}{\iota}\sqcup\Fie_{2}\sqcup\{1\pt\}.
\end{align*}
Here $\iota$ is the involution of $(\Fie_{2})^{2}$ given by the
transposition of components. We have the corresponding stratification
\[
\GCov(D)=\bigsqcup_{\bp}\GCov(D)_{\bp},
\]
where $\bp$ runs over the partitions of $4$ and the corresponding
decompositions of $\MM$ and $\MM'$, 
\begin{gather*}
\MM=\sum_{\bp}\MM_{\bp}\text{ and }\MM'=\sum_{\bp}\MM'_{\bp}.
\end{gather*}
 To further computations, we need to assume the following conjecture.
\begin{conjecture}[The motivic version of Krasner's formula]
\label{conj: motivic Krasner}Suppose that $k$ has characteristic
$p>0$. Let $m\ge2$ be an integer and $\Fie_{m,d}\subset\Fie_{m}$
the locus of degree $m$ field extensions of $k((\pi))$ with discriminant
exponent $d$. Then we have the equality in $\cR$,
\[
[\Fie_{m,d}]=\begin{cases}
1 & (p\nmid m,\, d=m-1)\\
0 & (p\nmid m,\, d\ne m-1)\\
(\LL-1)\LL^{\left\lfloor (d-m+1)/p\right\rfloor } & (p\mid m,\, p\nmid(d-m+1))\\
0 & (p\mid m,\, p\mid(d-m+1)).
\end{cases}
\]
 
\end{conjecture}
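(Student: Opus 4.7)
The plan is to parametrize $\Fie_m$ by Eisenstein polynomials modulo Krasner-type equivalence and compute the motivic class of each discriminant stratum. Since $k$ is algebraically closed, every finite extension of $K:=k((\pi))$ is totally ramified, so $\Fie_m$ is the moduli of isomorphism classes of totally ramified extensions of degree $m$.

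The tame case $p\nmid m$ is immediate: by tame ramification theory together with the algebraic closedness of $k$, there is exactly one totally ramified extension of $K$ of degree $m$ up to isomorphism, namely $K(\pi^{1/m})$, and its discriminant exponent is $m-1$ exactly. This yields the first two cases of the formula.

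For the wild case $p\mid m$, I would work with the pro-variety $\cE_m$ of Eisenstein polynomials $f(x)=x^m+a_{m-1}x^{m-1}+\cdots+a_0$ and the natural surjection $\cE_m\to\Fie_m$, $f\mapsto[K[x]/(f)]$, whose fiber over $[L]$ is the orbit space $(\pi_L\cO_L^\times)/\Aut(L/K)$ of uniformizers. For such $f$ with root $\pi_L$, the discriminant exponent equals $v_L(f'(\pi_L))$. In characteristic $p$ with $p\mid m$ the leading term $mx^{m-1}$ and every $ia_ix^{i-1}$ with $p\mid i$ vanish, so $f'(\pi_L)=\sum_{p\nmid i} i a_i \pi_L^{i-1}$, giving generically
\[
d=\min_{\substack{1\le i\le m-1\\ p\nmid i}}\bigl(mv_K(a_i)+i-1\bigr).
\]
If this minimum is achieved at $i_0$, then $d+1\equiv i_0\pmod m$; since $p\mid m$ and $p\nmid i_0$, this forces $p\nmid(d-m+1)$. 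Hence $[\Fie_{m,d}]=0$ whenever $p\mid(d-m+1)$, because no Eisenstein polynomial can produce such a $d$.

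When $p\nmid(d-m+1)$, set $s=d-m+1$ and let $i_0$ be the unique element of $\{1,\ldots,m-1\}$ with $i_0\equiv s\pmod m$ and $p\nmid i_0$; then $v_K(a_{i_0})=1+(s-i_0)/m$, while each other $a_i$ with $p\nmid i$ must satisfy $v_K(a_i)\ge\lceil(d-i+1)/m\rceil$, with strict inequality when equality would tie the minimum. The leading unit of $a_{i_0}$ contributes the factor $\LL-1$, and the remaining free coefficients modulo the Krasner equivalence and the automorphism action on uniformizers should collapse to $\LL^{\lfloor s/p\rfloor}$. The hard part is making this last step rigorous in $\cR$: one must compute $[(\pi_L\cO_L^\times)/\Aut(L/K)]$ uniformly as $[L]$ varies across $\Fie_{m,d}$, reconciling the generic stratum (trivial automorphism group) with the lower-dimensional Galois strata (where $|\Aut|$ can be as large as $m$). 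A clean route would be through a motivic analogue of Serre's mass formula, packaging this weighted counting at the motivic level; an alternative is to proceed case by case as in \cite{Yasuda:2012fk} for cyclic $p$-groups, patching together known classes of $\GCov(D)$ for specific $G$ producing the desired extensions. Either strategy must confront the subtle parametrization of non-Galois wild extensions.
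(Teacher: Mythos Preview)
The statement you are attempting to prove is a \emph{conjecture} in the paper, not a theorem; the paper does not prove it. Immediately after stating it, the author only offers a heuristic: Krasner's count of totally ramified degree $m$ extensions of $\FF_q((\pi))$ with given discriminant, divided by $m$ to pass from subfields of a fixed algebraic closure to isomorphism classes weighted by automorphisms, matches the asserted formula upon substituting $\LL\mapsto q$. The author then writes that the conjecture ``seems to be the only reasonable possibility.'' There is no further argument. In particular, your proposal cannot be compared to a proof in the paper, because none exists.

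Your sketch is a sensible outline of how one might try to upgrade Krasner's argument to a motivic statement, and you correctly identify the genuine obstruction: controlling, uniformly in $\cR$, the fibers of the map from Eisenstein polynomials to isomorphism classes, i.e.\ the quotient $(\pi_L\cO_L^\times)/\Aut(L/K)$ as $[L]$ varies. This is exactly what the paper leaves open. Note also that in the paper the spaces $\Fie_m$ themselves are only conjectural moduli spaces (introduced in the same section without construction), so even making sense of $[\Fie_{m,d}]\in\cR$ is part of what is being conjectured. One small remark on your computation: since the indices $i\in\{1,\dots,m-1\}$ give pairwise distinct residues modulo $m$, the valuations $m\,v_K(a_i)+i-1$ are pairwise distinct, so the formula $d=\min_{p\nmid i}(m\,v_K(a_i)+i-1)$ holds on the nose, not merely generically; no cancellation can occur in $f'(\pi_L)$.
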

Krasner \cite{MR0225756} showed that if $q=p^{e}$ is a power of
a prime number $p$, then the number of totally ramified degree $m$
extensions of the power series field $\FF_{q}((\pi))$ in its algebraic
closure $\overline{\FF_{q}((\pi))}$ is exactly
\[
\begin{cases}
m & (p\nmid m,\, d=m-1)\\
0 & (p\nmid m,\, d\ne m-1)\\
m(q-1)q^{\left\lfloor (d-m+1)/p\right\rfloor } & (p\mid m,\, p\nmid(d-m+1))\\
0 & (p\mid m,\, p\mid(d-m+1)).
\end{cases}
\]
Counting isomorphism classes (with weights coming from automorphisms)
rather than subfields of $\overline{\FF_{q}((\pi))}$ as did in \cite{MR500361},
we can kill the factor $m$. The conjecture above seems to be the
only reasonable possibility. 

In what follows, we exhibit how to compute $\MM_{(2^{2})}$ and $\MM_{(2^{2})}'$.
Computations of the other terms are similar and easier. We go back
to the assumption that $k$ has characteristic two. If $d=2n+m$,
then the conjecture reads 
\[
[\Fie_{m,d}]=(\LL-1)\LL^{n}.
\]
Let $E\in\GCov(D)_{2,2}$ and $C=C_{1}\sqcup C_{2}$ the associated
quartic cover of $D$, where $C_{1}$ and $C_{2}$ are double covers
of $D$ with $d_{C_{1}/D}\le d_{C_{2}/D}$. Then 
\[
\bv_{\sv}(E)=\frac{d_{C_{1}/D}+d_{C_{2}/D}}{2}-\left\lfloor \frac{d_{C_{1}/D}}{2}\right\rfloor .
\]
If we write $d_{C_{1}/D}=2n+2$ and $d_{C_{2}/D}=2m+2$ for some $m\ge n\ge0$,
\[
\bv_{\sv}(E)=m+1.
\]
From the last property of $\cR$ in the list in Section \ref{sub:Motivic-integration},
we have 
\[
\left[\frac{(\GG_{m})^{n}}{S_{n}}\right]=\left[\frac{\AA_{k}^{n}}{S_{n}}\right]-\left[\frac{\AA_{k}^{n-1}}{S_{n-1}}\right]=\LL^{n}-\LL^{n-1}.
\]
Accordingly, 
\[
\left[\frac{\left(\Fie_{2,2n+2}\right)^{2}}{\iota}\right]=(\LL-1)\LL^{2n+1}.
\]
We have
\begin{align*}
\MM_{(2^{2})} & =\sum_{m=0}^{\infty}\sum_{n=0}^{m-1}(\LL-1)^{2}\LL^{n+m}\cdot\LL^{-3(m+1)}+\sum_{m=0}^{\infty}(\LL-1)\LL^{2m+1}\cdot\LL^{-3(m+1)}\\
 & =(\LL-1)^{2}\LL^{-3}\sum_{m=0}^{\infty}\LL^{-2m}\sum_{n=0}^{m-1}\LL^{n}+(\LL-1)\LL^{-2}\sum_{m=0}^{\infty}\LL^{-m}\\
 & =(\LL-1)^{2}\LL^{-3}\sum_{m=0}^{\infty}\LL^{-2m}\cdot\frac{\LL^{m}-1}{\LL-1}+(\LL-1)\LL^{-2}\cdot\frac{\LL}{\LL-1}\\
 & =(\LL-1)\LL^{-3}\sum_{m=0}^{\infty}\left(\LL^{-m}-\LL^{-2m}\right)+\LL^{-1}\\
 & =\frac{\LL^{-2}+\LL^{-1}+1}{\LL+1}.
\end{align*}
If we suppose that the $H$-orbits in $\{x_{1},x_{2},x_{3},x_{4}\}$
are $\{x_{1},x_{3}\}$ and $\{x_{2},x_{4}\}$, then 
\[
\AA_{k}^{2}\cong\{(x,y,x,y)\mid x,y\in k\}=V_{0}^{H}\subset\sv_{0}^{H},
\]
we have $\codim(\sv_{0}^{H},\sv_{0})=1$ and
\[
\MM_{(2^{2})}'=\LL^{3}\cdot\MM_{(2^{2})}=\frac{\LL+\LL^{2}+\LL^{3}}{\LL+1}.
\]
Thus $\MM_{(2^{2})}$ and $\MM_{(2^{2})}'$ are dual to each other
in the sense that they interchange by substituting $\LL^{-1}$ for
$\LL$. 

For the other terms $\MM_{\bp}$ and $\MM_{\bp}'$, we see
\begin{align*}
\MM_{(4)} & =\LL^{-4}+\LL^{-2} & \MM_{(4)}' & =\LL^{4}+\LL^{2}\\
\MM_{(3,1)} & =\LL^{-3} & \MM_{(3,1)}' & =\LL^{3}\\
\MM_{(2,1^{2})} & =\frac{\LL^{-1}}{\LL+1} & \MM_{(2,1^{2})}' & =\frac{\LL^{2}}{\LL+1}\\
\MM_{(1^{4})} & =1 & \MM_{(1^{4})}' & =1.
\end{align*}
For each partition $\bp$, we would have the duality. Summing these
up, we get
\begin{gather*}
\MM=\LL^{-4}+\LL^{-3}+\LL^{-2}+1+\frac{\LL^{-2}+2\LL^{-1}+1}{\LL+1},\\
\MM'=\LL^{4}+\LL^{3}+\LL^{2}+1+\frac{\LL+2\LL^{2}+\LL^{3}}{\LL+1}.
\end{gather*}

\begin{rem}
By similar computations, we can easily deduce the motivic counterpart
of Serre's mass formula \cite{MR500361} from Conjecture \ref{conj: motivic Krasner}:
in any characteristic and for any $m$,
\[
\int_{\Fie_{m}}\LL^{-d}\, d\tau=\LL^{1-m}.
\]
Here $d:\Fie_{m}\to\ZZ$ is the function associating the discriminant
exponent to a field extension and $\tau$ is the tautological motivic
measures on $\Fie_{n}$. This too justifies the conjecture. With some
more computation, it would be possible to get also the motivic version
of Bhargava's formula \cite{MR2354798}.
\end{rem}

\section{Concluding remarks\label{sec:Concluding-remarks}}

We will end the paper by making some remarks and raising several problems
for the future.

\subsection{Singularities of $\sv$, $\sv^{|F|}$ and $\sv^{|F|,\nu}$}

In the definition of log varieties, we assumed that the ambient variety
is always normal. It forced us to take the normalization $\sv^{|F|,\nu}$
of the untwisting variety $\sv^{|F|}$. The normality assumption enables
us to work in a standard setting of the minimal model program and
to use familiar computations of divisors. However this restriction
seems not to be really necessary. For instance, we can define the
stringy motif if we specify an invertible subsheaf of 
\[
\left(\bigwedge^{d}\Omega_{X/D}\right)^{\otimes r}\otimes K(X)
\]
rather than a boundary divisor $\Delta$. We then would be able to
replace most of arguments in this paper with ones using subsheaves
rather than divisors.

What kind of singularities can $\sv^{|F|}$ and $\sv^{|F|,\nu}$ have?
In the examples in Sections \ref{sec:A-tame-example} and \ref{sec:A-wild-example},
rather mild singularities appeared. Indeed, in both examples, for
every $E\in\GCov(D)$, the untwisting variety $\sv^{|F|}$ had only
normal hypersurface singularities having a crepant resolution. In
general, if $\sv\subset V$ is a hypersurface, then so is $\sv^{|F|}\subset V^{|F|}$,
although the author does not know if it is always normal. What about
complete intersections? If the answer is positive, then we would be
able to use Proposition \ref{prop: complete intersection} to compute
the boundary of $\fv^{|F|}$. Moreover, we might be able to generalize,
for instance, the semi-continuity of the minimal log discrepancies
to quotients of local complete intersections by combining arguments
used for local complete intersections \cite{MR2000468,MR2102399}
and quotient singularities \cite{Nakamura:2013fk}. 

In the tame case, if $\cO_{D}=k[[\pi]]$, then, as we saw in Section
\ref{sec:A-tame-example}, the map $u^{*}:\cO_{V}\to\cO_{V^{\left\langle F\right\rangle }}$
is simply given by $x_{i}\mapsto\pi^{a_{i}}\rx_{i}$, $a_{i}\in\QQ$
for a suitable choice of coordinates $x_{1},\dots,x_{d}\in\cO_{V}$
and $\rx_{1},\dots,\rx_{d}\in\cO_{V^{\left\langle F\right\rangle }}$.
Therefore, if $\sv\subset V$ is defined by $f_{1},\dots,f_{l}\in\cO_{V}$,
then the scheme-theoretic preimage $u^{-1}(\sv)\subset V^{\left\langle F\right\rangle }$
is defined by $u^{*}f_{1},\dots,u^{*}f_{l}$, which have the same
number of terms with $f_{1},\dots,f_{l}$ respectively. In particular,
if $\sv$ is an affine toric variety, then it is embedded into $V$
as a closed subvariety defined by binomials $f_{1},\dots,f_{l}$,
and then $u^{-1}(\sv)$ is also defined by binomials. Thanks to this
fact, we might be able to study $\sv^{|F|}$ from the combinatorial
viewpoint.

In the example in Section \ref{sec:A-wild-example}, $\sv^{|F|}$
had $A_{1}$-singularities and $D_{2n}^{0}$-singularities, from Artin's
classification of rational double points in positive characteristic
\cite{MR0450263}. In general, when $\sv$ and hence $\sv^{|F|}$
are surfaces (relative dimension one over $D$), then what kind of
singularities can $\sv^{|F|}$ have? Does every rational double point
appear on some $\sv^{|F|}$? If we can compute singularities of $\sv^{|F|}$
systematically, we would be able to compute the right side of the
equality in Conjecture \ref{conj: non-linear McKay-1} explicitly
and to derive many mass formulas, explained below.

\subsection{Mass formulas for extensions of a local field and local Galois representations}

For a constructible function $\Phi:\GCov(D)\to\cR$, the integral
\[
\int_{\GCov(D)}\Phi\, d\tau
\]
can be regarded as the motivic count of $G$-covers of $D$ with $E\in\GCov(D)$
weighted by $\Phi(E)$. If $\cO_{D}$ has a finite residue field $k=\FF_{q}$
rather than algebraically closed one, then the motif $\int_{\GCov(D)}\Phi\, d\tau$
should give an actual weighted count of $G$-covers of $D$ as its
\emph{point-counting realization.} This observation was made in \cite{Yasuda:2012fk,Wood-Yasuda-I}
in the context of the wild McKay correspondence for linear actions.
Such counts are number-theoretic problems by nature. Indeed, as clarified
in \cite{Wood-Yasuda-I}, counts appearing in the McKay correspondence
are closely related to counts of extensions of a local filed and to
counts of local Galois representations, for instance, studied in \cite{MR0225756,MR500361,MR2354798,MR2354797,MR2411405}:
formulas for such counts are called \emph{mass formulas.} The weights
previously considered have the form $\LL^{\alpha}$ for some function
$\alpha:\GCov(D)\to\QQ$, corresponding to weights of the form $\frac{1}{\sharp H}q^{\alpha}$
in actual counts if $k=\FF_{q}$. However, in Conjecture \ref{conj: non-linear McKay-1},
we have fancier weights $M_{\st,C_{G}(H)}(\fv^{|F|,\nu})$, which
are expected to be often rational functions in $\LL$ or $\LL^{1/n}$,
$n\in\ZZ_{>0}$ (it is actually the case in examples in Sections \ref{sec:A-tame-example}
and \ref{sec:A-wild-example}). The new weights clearly have geometric
meaning and might provide some insight to the number theory.

\subsection{Weight functions for general representations}

How to compute functions $\bw_{V}$ and $\bv_{V}$ for general linear
actions $G\curvearrowright V=\AA_{D}^{d}$? By now, we have satisfactory
answers in the following cases:
\begin{itemize}
\item the tame case \cite{Yasuda:2013fk,Wood-Yasuda-I},
\item the case where $\cO_{D}=k[[\pi]]$ with $k$ of characteristic $p>0$
and $G=\ZZ/p\ZZ$ \cite{Yasuda:2012fk}, 
\item permutation representations \cite{Wood-Yasuda-I},
\item a hyperplane in a permutation representation defined by an invariant
linear form (Corollary \ref{cor:weight non-permutation}).
\end{itemize}
For the general case, we can always embed a given representation into
a permutation representation and apply Corollary \ref{prop: complete intersection}.
The problem is to compute the divisor $A_{F}$, which appears to be
almost equivalent to computing functions $\bw_{V}$ and $\bv_{V}$.

\subsection{The convergence or divergence of motivic integrals}

Motivic integrals discussed in this paper do not generally converge
and stringy motifs and motivic masses can be infinite. In such a case,
the wild McKay correspondence conjecture does not mean much (but not
none). In characteristic zero, the convergence of a stringy motif
is equivalent to that the given singularities are (Kawamata) log terminal.
The author then called \emph{stringily log terminal} singularities
whose stringy motif converges, which is equivalent to the usual notion
of log terminal if singularities admit a log resolution. Since quotient
singularities in characteristic zero are always log terminal, this
divergence problem did not occur in the study of the McKay correspondence
in characteristic zero. However wild quotient singularities are sometimes
stringily log terminal and sometimes not. It is an interesting problem
to know when they are and when they are not. If $\cO_{D}=k[[\pi]]$
has characteristic $p>0$ and $G=\ZZ/p\ZZ$, then the convergence
is determined by the value of a simple representation-theoretic invariant
denoted by $D_{V}$ in \cite{Yasuda:2012fk}. Is it possible to generalize
this invariant to other groups?

Another problem is to attach finite values to divergent motivic integrals
by ``renormalizing'' them somehow, for instance, as tried by Veys
\cite{MR2008717,MR2030094} for stringy invariants in characteristic
zero. 

\bibliographystyle{amsalpha}
\bibliography{/Users/highernash/Dropbox/Math_Articles/mybib}

\end{document}